\begin{document}

\newtheorem{theorem}{Theorem}    
\newtheorem{proposition}[theorem]{Proposition}
\newtheorem{conjecture}[theorem]{Conjecture}
\def\theconjecture{\unskip}
\newtheorem{corollary}[theorem]{Corollary}
\newtheorem{lemma}[theorem]{Lemma}
\newtheorem{sublemma}[theorem]{Sublemma}
\newtheorem{observation}[theorem]{Observation}
\theoremstyle{definition}
\newtheorem{definition}{Definition}
\newtheorem{notation}[definition]{Notation}
\newtheorem{remark}[definition]{Remark}
\newtheorem{question}[definition]{Question}
\newtheorem{questions}[definition]{Questions}
\newtheorem{example}[definition]{Example}
\newtheorem{problem}[definition]{Problem}
\newtheorem{exercise}[definition]{Exercise}

\numberwithin{theorem}{section}
\numberwithin{definition}{section}
\numberwithin{equation}{section}

\def\earrow{{\mathbf e}}
\def\rarrow{{\mathbf r}}
\def\uarrow{{\mathbf u}}
\def\varrow{{\mathbf V}}
\def\tpar{T_{\rm par}}
\def\apar{A_{\rm par}}

\def\reals{{\mathbb R}}
\def\torus{{\mathbb T}}
\def\heis{{\mathbb H}}
\def\integers{{\mathbb Z}}
\def\naturals{{\mathbb N}}
\def\complex{{\mathbb C}\/}
\def\distance{\operatorname{distance}\,}
\def\support{\operatorname{support}\,}
\def\dist{\operatorname{dist}\,}
\def\Span{\operatorname{span}\,}
\def\degree{\operatorname{degree}\,}
\def\kernel{\operatorname{kernel}\,}
\def\dim{\operatorname{dim}\,}
\def\codim{\operatorname{codim}}
\def\trace{\operatorname{trace\,}}
\def\Span{\operatorname{span}\,}
\def\dimension{\operatorname{dimension}\,}
\def\codimension{\operatorname{codimension}\,}
\def\nullspace{\scriptk}
\def\kernel{\operatorname{Ker}}
\def\ZZ{ {\mathbb Z} }
\def\p{\partial}
\def\rp{{ ^{-1} }}
\def\Re{\operatorname{Re\,} }
\def\Im{\operatorname{Im\,} }
\def\ov{\overline}
\def\eps{\varepsilon}
\def\lt{L^2}
\def\diver{\operatorname{div}}
\def\curl{\operatorname{curl}}
\def\etta{\eta}
\newcommand{\norm}[1]{ \|  #1 \|}
\def\expect{\mathbb E}
\def\bull{$\bullet$\ }

\def\xone{x_1}
\def\xtwo{x_2}
\def\xq{x_2+x_1^2}
\newcommand{\abr}[1]{ \langle  #1 \rangle}

\newcommand{\Norm}[1]{ \left\|  #1 \right\| }
\newcommand{\set}[1]{ \left\{ #1 \right\} }
\def\one{\mathbf 1}
\def\whole{\mathbf V}
\newcommand{\modulo}[2]{[#1]_{#2}}

\def\scriptf{{\mathcal F}}
\def\scriptg{{\mathcal G}}
\def\scriptm{{\mathcal M}}
\def\scriptb{{\mathcal B}}
\def\scriptc{{\mathcal C}}
\def\scriptt{{\mathcal T}}
\def\scripti{{\mathcal I}}
\def\scripte{{\mathcal E}}
\def\scriptv{{\mathcal V}}
\def\scriptw{{\mathcal W}}
\def\scriptu{{\mathcal U}}
\def\scriptS{{\mathcal S}}
\def\scripta{{\mathcal A}}
\def\scriptr{{\mathcal R}}
\def\scripto{{\mathcal O}}
\def\scripth{{\mathcal H}}
\def\scriptd{{\mathcal D}}
\def\scriptl{{\mathcal L}}
\def\scriptn{{\mathcal N}}
\def\scriptp{{\mathcal P}}
\def\scriptk{{\mathcal K}}
\def\frakv{{\mathfrak V}}

\author{Michael Christ}
\address{
        Michael Christ\\
        Department of Mathematics\\
        University of California \\
        Berkeley, CA 94720-3840, USA}
\email{mchrist@math.berkeley.edu}
\thanks{Research supported in part by NSF grant DMS-0901569.
The second author was supported partly by NSFC (Grant No.10701010), NSFC
(Key program Grant No.10931001), Beijing Natural Science Foundation
(Grant: 1102023), Program for Changjiang Scholars and Innovative Research
Team in University.}

\keywords{Extremizers, Euler-Lagrange equation, smoothness, multilinear bounds,
weighted norm inequalities, $A_p$ weights.  }


\author{Qingying Xue}
\address{Qingying Xue
\\
School of Mathematical Sciences
\\
Beijing Normal University
\\
Laboratory of Mathematics and Complex Systems
\\
Ministry of Education
\\
Beijing 100875
\\
People's Republic of China
}
\email{qyxue@bnu.edu.cn}

\date{December 24, 2010.}

\title
[Smoothness of Extremizers]
{Smoothness of Extremizers \\ of a Convolution Inequality}

\maketitle

\begin{abstract}
Let $d\ge 2$ and $T$ be the convolution operator
$Tf(x)=\int_{\reals^{d-1}} f(x'-t,x_d-|t|^2)\,dt$,
which is is bounded from $L^{(d+1)/d}(\reals^d)$
to $L^{d+1}(\reals^d)$.
We show that any critical point $f\in L^{(d+1)/d}$
of the functional
$\norm{Tf}_{d+1}/\norm{f}_{(d+1)/d}$
is infinitely differentiable,
and that $|x|^\delta f\in L^{(d+1)/d}$
for some $\delta>0$.
In particular, this holds for all extremizers of the associated
inequality.
This is done by exploiting a generalized Euler-Lagrange equation,
and certain weighted norm inequalities for $T$.
\end{abstract}

\section{Introduction}

Optimal constants and extremizers have been determined
for some of the most fundamental $L^p$ inequalities
of Fourier and real analysis. Among such achievements are the
celebrated works of Beckner \cite{beckner}, Burkholder \cite{burkholder},
Lieb \cite{liebfractional},\cite{liebgaussian}, and Pichorides \cite{pichorides}.
Certain multilinear inequalities, governed by linear geometric structure,
have more recently been treated in \cite{BCCT}.
Still more recently,
optimal constants and extremizers have been determined
for Fourier restriction/extension inequalities
for paraboloids, in the lowest dimensions,
in works of Foschi \cite{foschi}, Hundertmark and Zharnitsky
\cite{HZ}, and  Bennett, Bez, Carbery, and Hundertmark \cite{hotstrichartz}.
The geometry which underlies restriction inequalities features curvature.

The present paper is one of a series
\cite{christshao1},\cite{christshao2},\cite{christquilodran},\cite{comegaextremals},\cite{christextremal},\cite{quasiextremal},\cite{betsyquasiextremal}
which treat questions concerning extremals for
certain $L^p$ norm inequalities,
whose form is determined by the influence of curvature and singularities.
These works focus on less fine questions such as the existence of extremizers,
precompactness of extremizing sequences,
and qualitative and quantitative properties of extremizers.
The present paper is concerned with such properties of
extremizers, for one particular inequality.

Let $d\ge 2$.
Points of $\reals^d$ will be represented
as $x=(x',x_d)\in\reals^{d-1}\times\reals^1$.
Our object of investigation is the convolution operator
\[
Tf(x)=\int_{\reals^{d-1}} f(x'-t,x_d-|t|^2)\,dt.
\]
This operator is bounded from $L^{(d+1)/d}(\reals^d)$
to $L^{d+1}(\reals^d)$, and
satisfies no other $L^p\to L^q$ inequalities.
The curvature of the parabola $x_d=|x'|^2$ and scaling symmetry
of the measure $dx'|_{x_d=|x'|^2}$
are the crucial ingredients in this theory.
This operator $T$ enjoys a rich symmetry structure discussed in
\cite{quasiextremal},
and is perhaps the most prototypical representative of the class of
operators $f\mapsto f*\mu$, where $\mu$ is a measure supported on
a nonflat submanifold of $\reals^d$.

Let $p_0=p_0(d)=(d+1)/d$ and $q_0=q_0(d)=d+1$.
Denote by ${\mathbf A}_d$
the optimal constant in the inequality
\begin{equation} \label{theinequality}
\norm{Tf}_{q_0}\le {\mathbf A}_d\norm{f}_{p_0}.
\end{equation}
An $\eps$-quasiextremal for inequality \eqref{theinequality}
is a function satisfying $\norm{Tf}_{q_0}\ge \eps\norm{f}_{p_0}$.
A characterization of quasiextremals is established in \cite{quasiextremal},
which includes some quantitative though non-optimal control as $\eps\to 0$.
It is shown in \cite{christextremal} that extremizers for the inequality
\eqref{theinequality} exist, and that any nonnegative extremizing
sequence of functions is precompact modulo action of the group of all
geometric symmetries of the inequality.

In the present paper we take a third step by establishing two properties
of extremizers: smoothness, and some improved decay.
These are established for all critical points of the functional
$\norm{Tf}_{q_0}/\norm{f}_{p_0}$.
We formulate a conjecture concerning the precise decay rate of
nonnegative extremizers.
The extremizers and optimal constant ${\mathbf A}_d$ remain unknown,
and it remains unknown whether extremizers are unique modulo natural symmetries.

A technical device which underlies the analysis, and which may be of some independent interest,
is a family of weighted norm inequalities for $T$.
Lemma~\ref{lemma:weighted} formulates
a one parameter family of
rather sharp weighted inequalities. These involve pairs of exponents
different from $(p_0,q_0)$, are not consequences of \eqref{theinequality},
and are suited to our purpose.

The transpose $T^*$ of $T$  takes the form
\[
T^*f(x)
=\int_{\reals^{d-1}} f(x'+t,x_d+|t|^2)\,dt
=\int_{\reals^{d-1}} f(x'-t,x_d+|t|^2)\,dt.
\]
$T^*$ is equal to $T$  conjugated with the
norm-preserving operator associated to the transformation
$(y',y_d)\mapsto (y',-y_d)$ of $\reals^d$. In particular, $T^*$
is likewise bounded
from $L^{(d+1)/d}(\reals^d)$ to $L^{d+1}(\reals^d)$,

Real-valued critical points of the functional
$\norm{Tf}_{q_0}/\norm{f}_{p_0}$
are characterized by the generalized Euler-Lagrange equation
\begin{equation} \label{eq:EL}
f = \lambda \Big(T^*\big[(Tf)^d \big] \Big)^d
\end{equation}
where
\begin{equation}
\lambda = A_d^{-dq_0} \norm{f}_{p_0}^{dp_0-dq_0}.
\end{equation}
Complex-valued critical points are characterized by this same equation
\eqref{eq:EL} with $\lambda  = \norm{Tf}_{q_0}^{-dq_0}\norm{f}_{p_0}^{dp_0}$,
provided that powers of complex numbers on the right-hand side of \eqref{eq:EL}
are interpreted as follows:
If $z\in\complex$ and $0\ne s\in\reals$,
then $z^s$ should be interpreted as $z|z|^{s-1}$.
When $s=d$ is an even integer,
this is not a product of positive integer powers of $z$ and $\bar z$.

The main result of this paper is:
\begin{theorem} \label{thm:main}
Let $d\ge 2$, and let $\lambda\in\reals$.
Let $f\in L^{p_0(d)}(\reals^d)$ be any
real-valued solution of the generalized Euler-Lagrange equation \eqref{eq:EL}.
Then $f\in C^\infty$.
Moreover, all partial derivatives of $f$ are bounded functions,
and there exists $\delta>0$ such that
$(1+|x|)^\delta \nabla^k f(x)\in L^{p_0}(\reals^d)$ for all $k\ge 0$.

The same conclusion holds for all complex-valued solutions if $d$ is even
and $\lambda\in\complex$.
\end{theorem}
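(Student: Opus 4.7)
The plan is to use the Euler--Lagrange equation~\eqref{eq:EL},
written in the form $f=\lambda(T^{*}g)^{d}$ with $g=(Tf)^{d}$,
as a self-improvement device: the four operations on the right-hand
side---$T$, the $d$-th power, $T^{*}$, and a second $d$-th
power---are each regularizing in an appropriate sense, so any
additional information about $f$ propagates through the equation to
a better version of the same information on the left. I would
extract first polynomial weighted decay, then smoothness, and
finally combine the two.

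For polynomial weighted integrability, I would argue that if
$(1+|x|)^{\sigma}f\in L^{p_0}$ for some $\sigma\ge 0$, then a
single pass through~\eqref{eq:EL} using
Lemma~\ref{lemma:weighted} gives weighted integrability of $Tf$,
hence of $g=(Tf)^{d}$, then of $T^{*}g$, and finally of
$(T^{*}g)^{d}=\lambda^{-1}f$ with a strictly larger weight
exponent, since the two $d$-th powers amplify the exponent. A
continuation argument would then force the supremum
\[
\delta_{\star}=\sup\set{\sigma\ge 0:(1+|x|)^{\sigma}f\in L^{p_0}}
\]
to be as large as desired, once it is known to be positive. The
essential content is producing an initial $\delta_{0}>0$; for this
I would exploit the fact that Lemma~\ref{lemma:weighted} treats
exponents other than the endpoint pair $(p_{0},q_{0})$, combining
the unweighted bound $Tf\in L^{q_{0}}$ with a localized weighted
bound on the tail of $f$ to produce a small but genuine weighted
gain on the first pass.

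With polynomial decay in hand, H\"older's inequality upgrades $f$
to $L^{p}_{\mathrm{loc}}$ for every $p<\infty$. For genuine
differentiability I would use that $T^{*}$ is a Fourier multiplier
of order $-(d-1)/2$ on $L^{2}$, since the Fourier transform of the
surface measure on the parabola decays like $|\xi|^{-(d-1)/2}$ by
stationary phase. Iterating this smoothing through~\eqref{eq:EL}
and using that the $d$-th power preserves Sobolev regularity of
bounded functions, one obtains $f\in W^{k,p_{0}}$ for all $k$;
Sobolev embedding then yields $f\in C^{\infty}$ with bounded
partial derivatives. For weighted decay of higher derivatives, I
would differentiate~\eqref{eq:EL} and use
$\partial^{\alpha}(Tf)=T(\partial^{\alpha}f)$ (valid by
differentiation under the integral sign) to reduce the weighted
bound for $\partial^{\alpha}f$ to the previous argument applied to
$\partial^{\alpha}f$ in place of $f$.

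The main obstacle is the initial positive gain in the weighted
integrability step: the hypothesis $f\in L^{p_{0}}$ alone carries
no a priori polynomial decay, so the bootstrap must be primed by
extracting some $\delta_{0}>0$ from~\eqref{eq:EL} directly. This
requires exploiting the non-endpoint exponents in
Lemma~\ref{lemma:weighted} together with the nonlinear $d$-th
powers in~\eqref{eq:EL}, converting the absence of
$L^{p}$-improving for $T$ at the endpoint into a genuine weighted
gain at nearby exponents. A secondary issue arises in the complex
case from the merely $C^{d-1}$ smoothness of $z\mapsto z|z|^{d-1}$
at $z=0$, which must be handled by local analysis near the zero
set of $T^{*}g$.
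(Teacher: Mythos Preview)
Your decay bootstrap contains a genuine error. You claim that passing $f$ once through the right-hand side of \eqref{eq:EL} converts $(1+|x|)^{\sigma}f\in L^{p_0}$ into the same statement with a strictly larger $\sigma$, because ``the two $d$-th powers amplify the exponent.'' They do not. In the paper's notation, the weighted inequalities of Lemma~\ref{lemma:weighted} say precisely that $S$ maps $X_\theta$ to $X_\theta$, with $\norm{Sf}_{X_\theta}\le C\norm{f}_{X_\theta}^{d^2}$; the weight index $\theta$ is preserved, not improved. This is forced by the parabolic dilation invariance of the inequality, and the paper says so explicitly in the introduction: there are no {\it a priori} inequalities asserting that $S(f)$ has extra decay if $f\in L^{p_0}$. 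So both your iteration step and your proposed mechanism for producing an initial $\delta_0>0$ fail as stated.

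The paper obtains the initial decay by a completely different device. One splits $f=\varphi_\eps+g_\eps$ with $\varphi_\eps$ bounded and compactly supported and $\norm{g_\eps}_{X_0}<\eps$, and rewrites the equation as $g_\eps=\lambda S g_\eps+\scriptl(\varphi_\eps,g_\eps)$. The forcing term $\scriptl$ lies in $X_{1/d^2}$ because every monomial in its multilinear expansion contains at least one factor of $\varphi_\eps\in X_1$ (Corollary~\ref{cor:holderforXt}); this is where the gain comes from. One then shows that $h\mapsto \lambda Sh+\scriptl(\varphi_\eps,g_\eps)$ is a strict contraction on a small ball in $X_t$ for all sufficiently small $t$, uniformly down to $t=0$, and uses uniqueness of the fixed point to identify $g_\eps$ with the $X_t$ solution. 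The smallness of $g_\eps$ is essential; a direct bootstrap on $f$ itself cannot work.

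Your smoothness argument has a related gap. The operator $T$ is not globally a Fourier multiplier of order $-(d-1)/2$; only the truncated operators $T_\rho$ gain derivatives (Lemma~\ref{lemma:locallygainsmoothness}), at the cost of a factor $\rho^A$. To control the large-$|t|$ part of the integral one must already have the extra decay $f\in X_t$ for some $t>0$ (Corollary~\ref{cor:smoothing}). And the {\it a priori} inequality $\norm{\nabla f}_{X_\varrho}\le C\norm{f}_{X_\rho}^{d^2}$ is circular until one replaces $\nabla$ by the mollified operators $D_\Lambda$ and proves a Leibniz rule for them (Lemma~\ref{lemma:leibniz}); your proposal to ``differentiate \eqref{eq:EL}'' presupposes exactly the regularity you are trying to prove.
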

This means, of course, that there exists a $C^\infty$ function which is equal
almost everywhere to $f$.

Inequality \eqref{theinequality} is invariant under parabolic scaling.
There are no {\it a priori} inequalities which assert that
if $f\in L^{p_0}$, then $S(f)=(T^*[(Tf)^d])^d$ has additional decay
or smoothness properties, which would lead to a simple proof of
the theorem via a boostrapping argument.
Instead, we will analyze the linearization (and all but the highest order
terms in its finite Taylor series) of the multilinear operator $S$
about a dense class of functions, and will show that these operators
do improve decay. The key in using this fact in conjunction with \eqref{eq:EL}
and a fixed-point argument, is to find Banach spaces which encode
more rapid decay than does $L^{(d+1)/d}$, and which are preserved by $S$.
We do this by developing a limited theory of weighted inequalities for $T$.

We will demonstrate Theorem~\ref{thm:main} only in the real case. The same
reasoning applies to the complex case, with small and straightforward modifications
in formulas to accommodate various complex conjugations.

\medskip
Define
\begin{align}
\upsilon(x) &= \min\big(1,|x'|^{-d},|x_d-|x'|^2|^{-d} \big)
\\
\upsilon_*(x) &= \min\big(1,|x'|^{-d},|x_d+|x'|^2|^{-d} \big)
\end{align}

The functions $\upsilon,\upsilon_*$ are $O(|x|^{-\delta})$
as $|x|\to\infty$ for some $\delta>0$,
and have another noteworthy aspect.
Consider the parabolic dilations $\delta_r(x',x_d) = (rx',r^2x_d)$, for $r>0$,
and the associated operators $\delta_r(f)(x) = f(\delta_r(x))$.
With respect to these dilations, $T$ enjoys the symmetry
$\delta_r(Tf) \equiv r^{d-1}T(\delta_r(f))$.
The weight $v$ equals $w^{-d}$ where $w(x)$ is the maximum of
the three quantities $w_0(x)=1$, $w_1(x)=|x'|$ and $w_2(x)=|x_d-|x'|^2|$.
Each is homogeneous with respect to the dilations $\delta_r$,
but $w_j$ is homogeneous of degree $j$ for $j=0,1,2$.\footnote{
The three are related: $|w_j(x)-w_j(\tilde x)|=O(w_{j-1}(x))$
if $|x-\tilde x|=O(1)$, for $j=1,2$.}
We believe that $\upsilon$ accurately expresses the
behavior of extremals, in the following sense.

\begin{conjecture}
Let $\lambda\in\complex$, and let $f\in L^{p_0(d)}(\reals^d)$
be any solution of the generalized Euler-Lagrange equation \eqref{eq:EL}.

(i)
There exists $C=C(f,\lambda)<\infty$
such that  for almost every $x\in\reals^d$,
$f(x)\le C\upsilon(x)$.

(ii)
If $\lambda>0$, $f$ is nonnegative and $\norm{f}_{p_0(d)}>0$,
then there exists $c=c(f,\lambda)>0$ such that for almost every $x\in\reals^d$,
$f(x)\ge c\upsilon(x)$.
\end{conjecture}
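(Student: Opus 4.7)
The plan is to combine Theorem~\ref{thm:main} with a monotonicity bootstrap driven by the Euler-Lagrange equation~\eqref{eq:EL} to match both sides of the conjectured bound $f\asymp\upsilon$. Writing the nonlinear Euler-Lagrange map
\[
S(w)=|\lambda|\bigl(T^*[(Tw)^d]\bigr)^d,
\]
any solution $f$ of~\eqref{eq:EL} satisfies $|f|\le S(|f|)$ pointwise, with equality when $\lambda>0$ and $f\ge0$; here I am using $|T^*g|\le T^*|g|$ and $|Tf|\le T|f|$, both valid because $T$ and $T^*$ have nonnegative kernels. The guiding heuristic is that $\upsilon$ is itself a fixed point of $S$ up to a multiplicative constant, i.e.\ $S(\upsilon)\asymp\upsilon$, because the parabolic dilations $\delta_r$ act compatibly on both $T$ and each of the three regimes defining $\upsilon$ (the unit tube, the $|x'|^{-d}$ regime, and the $|x_d-|x'|^2|^{-d}$ regime).

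For the upper bound (i), I would start from the crude weight $w_0$ supplied by Theorem~\ref{thm:main}, which yields $f\in L^\infty$ and $(1+|x|)^{\delta_0}f\in L^{p_0}$, hence $|f|\le w_0$ with $w_0$ a decaying function. Using the weighted inequalities of Lemma~\ref{lemma:weighted}, I would bound $T^*[(Tw_0)^d]$ pointwise, separately in each of the three geometric regions, to obtain $|f|\le S(|f|)\le S(w_0)=:w_1$, and then iterate $|f|\le w_n:=S(w_{n-1})$. If one can show that the $w_n$ converge to $C\upsilon$ rather than stagnating at a weaker rate, (i) follows. An equivalent alternative is to set up a weighted $L^\infty$ fixed-point problem for $w\mapsto S(w)/\upsilon$ near the constant $1$.

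For the lower bound (ii), continuity from Theorem~\ref{thm:main} combined with nonnegativity and $\norm{f}_{p_0}>0$ produces a ball $B=B(x_0,r)$ on which $f\ge c_0>0$. Then $Tf(x)\ge c_0\cdot|\{t\in\reals^{d-1}:(x'-t,x_d-|t|^2)\in B\}|$ is bounded below on every $x$ from which a downward-opening paraboloid meets $B$. Raising to the $d$th power and applying $T^*$ produces a pointwise positive lower bound on $T^*[(Tf)^d](x)$ at every $x\in\reals^d$, since for any $x$ there exist upward-opening paraboloids through $x$ meeting the preceding set. Plugging into~\eqref{eq:EL} gives a first quantitative lower bound, which can be iterated monotonically and the constants tracked through each of the three regions to reach $f\ge c\upsilon$.

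The main obstacle, and the reason this remains only a conjecture, is rigorously establishing the heuristic $S(\upsilon)\asymp\upsilon$. Computing $T\upsilon$ requires integrating a three-piece weight along affine $(d-1)$-dimensional paraboloids whose location relative to the three regions depends delicately on $x$; the contributions must fit together without logarithmic losses at the regime boundaries, and the same type of analysis must then be iterated once more when applying $T^*$ to $(T\upsilon)^d$. I would expect the upper bound (i) to be the more accessible half, being amenable to monotone iteration once the estimate $S(\upsilon)\lesssim\upsilon$ is in place, while the matching lower bound (ii) appears strictly harder, as it demands quantitative non-cancellation in the parabolic averaging rather than merely absolute-value majorization.
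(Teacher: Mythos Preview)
The statement is labeled a \emph{Conjecture} in the paper and is not proved there; there is no proof in the paper to compare your proposal against. You acknowledge as much, offering a strategy rather than a completed argument.

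One point worth correcting: half of your ``main obstacle'' is already in the paper. Lemma~\ref{lemma:key} establishes $T(\upsilon_*)\le C\upsilon^{1/d}$ and $T^*(\upsilon)\le C\upsilon_*^{1/d}$, and chaining these gives $S(\upsilon_*)\lesssim\upsilon_*$; the Corollary at the end of \S\ref{section:tedious} records exactly this iteration. So the delicate three-region parabolic integration you describe as the obstruction to the upper bound has in fact been carried out in full (that is the content of \S\ref{section:tedious}). What the paper does \emph{not} supply is the reverse inequality $S(\upsilon_*)\gtrsim\upsilon_*$, which would indeed be needed for the lower bound~(ii) along the lines you sketch.

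The genuine gap in your plan for~(i) is the convergence of the iteration $w_{n+1}=S(w_n)$. Since $S$ is homogeneous of degree $d^2>1$, the induced one-dimensional dynamics on coefficients, $A\mapsto cA^{d^2}$, has an \emph{unstable} fixed point: iterates starting above it diverge, iterates starting below it collapse to zero. Consequently, if $w_0$ decays more slowly than $\upsilon$ (which is all Theorem~\ref{thm:main} gives), there is no mechanism forcing $S(w_0)$ to be closer to the sharp profile, and if one already has $|f|\le A\upsilon$ for some finite $A$ there is nothing left to prove. The same instability undermines the weighted $L^\infty$ fixed-point alternative you mention: any contraction is only local, near a profile you have not shown how to reach. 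Bridging from the weak decay of Theorem~\ref{thm:main} to the sharp pointwise bound is precisely the missing idea, and neither monotone iteration nor the inequality $S(\upsilon_*)\lesssim\upsilon_*$ supplies it.
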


\section{Weighted inequalities}

The following elementary inequalities provide a foundation for our analysis.
The proof of Lemma~\ref{lemma:key},
deferred to \S\ref{section:tedious}, is thoroughly elementary but is not short.
\begin{lemma} \label{lemma:key}
For each $d\ge 2$ there exists $C_d<\infty$ such that
\begin{align}
T(\upsilon_*)&\le C_d \upsilon^{1/d}
\\
T^*(\upsilon)&\le C_d \upsilon_*^{1/d}.
\end{align}
\end{lemma}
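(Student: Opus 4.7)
The plan is to reduce both inequalities in Lemma~\ref{lemma:key} to a single one-dimensional integral estimate and then verify that estimate by a direct case analysis. First, the reflection $R(y',y_d)=(y',-y_d)$ is an $L^p$ isometry that conjugates $T$ with $T^*$ and simultaneously interchanges $\upsilon$ with $\upsilon_*$, so the two stated inequalities are equivalent, and it suffices to prove $T(\upsilon_*)\le C_d\upsilon^{1/d}$. Setting $b=x'$, $c=x_d-|x'|^2$, and substituting $t=x'+s$, the identity $(x_d-|t|^2)+|x'-t|^2=c-2b\cdot s$ gives
\begin{equation*}
T(\upsilon_*)(x)=\int_{\reals^{d-1}}\min\bigl(1,\ |s|^{-d},\ |c-2b\cdot s|^{-d}\bigr)\,ds,
\end{equation*}
while $\upsilon(x)^{1/d}=\min(1,|b|^{-1},|c|^{-1})$. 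The lemma therefore reduces to the scalar estimate
\begin{equation*}
\int_{\reals^{d-1}}\min\bigl(1,\ |s|^{-d},\ |c-2b\cdot s|^{-d}\bigr)\,ds\le C_d\min\bigl(1,|b|^{-1},|c|^{-1}\bigr).
\end{equation*}

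Next, assuming $b\ne 0$ (the case $b=0$ is immediate by direct integration), decompose $s=\tau b/|b|+s_\perp$ with $s_\perp\perp b$, so that the integrand depends only on $\tau\in\reals$ and $|s_\perp|$. For $d\ge 3$, polar integration in $s_\perp\in\reals^{d-2}$, split at the radius where $(\tau^2+|s_\perp|^2)^{-d/2}$ equals $A:=\min(1,|c-2|b|\tau|^{-d})$, is a routine computation yielding
\begin{equation*}
\int_{\reals^{d-2}}\min\bigl(A,\ (\tau^2+|s_\perp|^2)^{-d/2}\bigr)\,ds_\perp\le C_d\min\bigl(A^{2/d},\ \tau^{-2}\bigr)=C_d\min\bigl(1,|c-2|b|\tau|^{-2},\tau^{-2}\bigr).
\end{equation*}
For $d=2$ the inner integration is vacuous and the integrand in the scalar estimate already has this form. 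In every case the problem reduces to the one-dimensional bound
\begin{equation*}
J:=\int_\reals\min\bigl(1,\ |c-2|b|\tau|^{-2},\ \tau^{-2}\bigr)\,d\tau\le C\min\bigl(1,|b|^{-1},|c|^{-1}\bigr).
\end{equation*}

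The bound on $J$ is proved by distinguishing regimes according to which of the three factors realizes the minimum. When $|b|$ and $|c|$ are both at most $1$, the trivial bound $J\le\int\min(1,\tau^{-2})\,d\tau=4$ suffices. When $|b|\ge 1$ dominates, the substitution $u=2|b|\tau$ converts $J$ into $(2|b|)^{-1}\int\min(1,(c-u)^{-2},4|b|^2/u^2)\,du$, which is at most $(2|b|)^{-1}\int\min(1,(c-u)^{-2})\,du\le 2/|b|$. When $|c|$ dominates, the integral is split into the slab $\{|c-2|b|\tau|\le 1\}$, which has length $|b|^{-1}$ and lies near the point $\tau=c/(2|b|)$ where $\tau^{-2}$ is of size $b^2/c^2$, and its complement, on which the integrand is controlled by $\min(\tau^{-2},|c-2|b|\tau|^{-2})$; separate elementary computations on each piece yield a contribution of order $|c|^{-1}$.

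The main obstacle is the careful bookkeeping in the regime where $|b|$ and $|c|$ are of comparable size, in which the minimum is realized by different factors in several overlapping intervals of $\tau$. The decomposition must be arranged so that the contribution from each interval is controlled simultaneously by each of $1$, $|b|^{-1}$, and $|c|^{-1}$ up to constants, since no single bound suffices. The individual integrals are elementary, but verifying that the resulting estimate attains the claimed sharp form in every regime is the main technical content of the proof.
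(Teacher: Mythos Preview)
Your proposal is correct, and the route you take is genuinely different from the paper's. Both proofs begin with the same reflection argument to reduce to a single inequality and the same substitution that converts the integral into
\[
\int_{\reals^{d-1}}\min\bigl(1,\ |s|^{-d},\ |c-2b\cdot s|^{-d}\bigr)\,ds,
\]
but they then diverge sharply. The paper attacks this $(d-1)$--dimensional integral directly: it partitions $\reals^{d-1}$ into the two regions where $|s|$ or $|c-2b\cdot s|$ realizes the minimum, then subdivides each according to the relative sizes of $|b|$ and $|c|$, and further into dyadic shells $E^{j,k}_m(x)$ indexed by the sizes of $|s|$ and $|c-2b\cdot s|$. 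This produces on the order of nine cases, each with its own geometric estimate, and the argument occupies several pages.

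Your approach instead exploits the rotational symmetry of the integrand about the axis $\hat b$: writing $s=\tau\hat b+s_\perp$ and integrating out $s_\perp\in\reals^{d-2}$ collapses the problem to the single scalar inequality
\[
\int_{\reals}\min\bigl(1,\ (c-2|b|\tau)^{-2},\ \tau^{-2}\bigr)\,d\tau\ \lesssim\ \min\bigl(1,|b|^{-1},|c|^{-1}\bigr),
\]
which is dimension-independent. The perpendicular integration step is a clean computation (both bounds $A^{2/d}$ and $\tau^{-2}$ follow from comparing with $\int\min(A,|s_\perp|^{-d})\,ds_\perp$ and $\int(\tau^2+|s_\perp|^2)^{-d/2}\,ds_\perp$ respectively), and the residual one-dimensional case analysis is far lighter than the paper's. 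Your closing paragraph somewhat overstates the difficulty of the comparable-size regime: once in one dimension, splitting at $|c-2|b|\tau|=|c|/2$ (rather than at $1$) and using $\int\min(v^{-2},M)\,dv=4M^{1/2}$ dispatches the $|c|^{-1}$ bound in a few lines, with no delicate overlap issues. What the paper's approach buys is that it never appeals to the rotational symmetry and so would adapt more readily to anisotropic variants of the weight; what yours buys is brevity and a proof whose length does not grow with $d$.
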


For $\theta\in[0,1]$ define exponents $p_\theta,q_\theta$
to be
\begin{align}
p_\theta^{-1} &= (1-\theta)p_0(d)^{-1}
\\
q_\theta^{-1} &= (1-\theta)q_0(d)^{-1}.
\end{align}
Then $q_\theta = p_\theta/d$.

\begin{lemma}[Weighted Inequalities] \label{lemma:weighted}
There exists $C<\infty$
such that for every $t\in[0,1]$,
for every nonnegative function $f$,
\begin{align}
\Big(\int_{\reals^d} (Tf)^{q_t} \upsilon^{-tq_t/d}\Big)^{1/q_t}
&\le C
\Big(\int_{\reals^d} f^{p_t} \upsilon_*^{-tp_t}\Big)^{1/p_t}.
\\
\Big(\int_{\reals^d} (T^*f)^{q_t} \upsilon_*^{-tq_t/d}\Big)^{1/q_t}
&\le C
\Big(\int_{\reals^d} f^{p_t} \upsilon^{-tp_t}\Big)^{1/p_t}.
\end{align}
\end{lemma}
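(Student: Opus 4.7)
The plan is to prove both inequalities simultaneously by complex interpolation between the $t=0$ endpoint, which is the basic bound $\|Tf\|_{q_0} \le \mathbf{A}_d \|f\|_{p_0}$, and the $t=1$ endpoint, which amounts to the pointwise bound $T(\upsilon_*) \le C_d\, \upsilon^{1/d}$ supplied by Lemma~\ref{lemma:key}. For this, I introduce the analytic family
\begin{equation*}
S_z f = \upsilon^{-z/d}\, T\bigl(\upsilon_*^{z} f\bigr), \qquad 0 \le \Re z \le 1,
\end{equation*}
acting initially on bounded, compactly supported, nonnegative $f$. Since $0 < \upsilon, \upsilon_* \le 1$ on $\reals^d$, this family is well-defined and depends analytically on $z$ inside the strip.

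On the line $\Re z = 0$, the identities $|\upsilon^{-is/d}| = |\upsilon_*^{is}| = 1$ give the pointwise bound $|S_{is} f| \le T(f)$, hence $\|S_{is}f\|_{q_0} \le \mathbf{A}_d \|f\|_{p_0}$. On the line $\Re z = 1$, writing $|S_{1+is}f| = \upsilon^{-1/d}\, |T(\upsilon_* \upsilon_*^{is} f)|$ and applying Lemma~\ref{lemma:key} yields
\begin{equation*}
|S_{1+is}f(x)| \le \|f\|_\infty\, \upsilon(x)^{-1/d}\, T(\upsilon_*)(x) \le C_d \|f\|_\infty.
\end{equation*}
Stein's analytic interpolation theorem then delivers $\|S_t f\|_{q_t} \le C\, \|f\|_{p_t}$ uniformly in $t\in[0,1]$, since the exponents $(p_t,q_t)$ are precisely those that interpolate $(p_0,q_0)$ with $(\infty,\infty)$ at parameter $t$.

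Substituting $f \mapsto \upsilon_*^{-t} f$ unwinds the definition of $S_t$ and produces exactly the first inequality claimed; monotone convergence then extends it to arbitrary nonnegative measurable $f$. The second inequality, for $T^*$, is handled by the parallel family $\tilde S_z f = \upsilon_*^{-z/d}\, T^*(\upsilon^{z} f)$, invoking the companion estimate $T^*(\upsilon) \le C_d\, \upsilon_*^{1/d}$. The only step requiring any care is the admissibility hypothesis for Stein interpolation, but this reduces to elementary growth bounds on $\upsilon^{-z/d}$ and $\upsilon_*^z$ inside the strip and is routine on the initial dense class. The nontrivial content of the lemma is already concentrated in Lemma~\ref{lemma:key}.
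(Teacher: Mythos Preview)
Your proof is correct and follows essentially the same approach as the paper: the paper introduces the identical analytic family $T_z f = \upsilon^{-z/d}T(\upsilon_*^z f)$, notes boundedness from $L^{p_0}$ to $L^{q_0}$ when $\Re z = 0$ and from $L^\infty$ to $L^\infty$ when $\Re z = 1$ via Lemma~\ref{lemma:key}, and invokes complex interpolation. You have supplied slightly more detail (the substitution to unwind $S_t$, monotone convergence, and a remark on admissibility), but the argument is the same.
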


\begin{proof}
Consider the analytic family of operators
$T_z f = \upsilon^{-z/d}T(\upsilon_*^{z}f)$
on the strip $\{z: 0\le\Re(z)\le 1\}$.
When $\Re(z)=0$, $T_z$ is bounded from
$L^{p_0}$ to $L^{q_0}$.
When $\Re(z)=1$, $T_z$ is bounded from
$L^\infty$ to $L^\infty$, by Lemma~\ref{lemma:key}.
Both bounds are uniform in $\Im(z)$.
The first conclusion of Lemma~\ref{lemma:weighted} follows
by complex interpolation.
The inequality for $T^*$ is proved in the same way.
\end{proof}

Lemma~\ref{lemma:weighted} has consequences which are conveniently expressed
in terms of certain nonlinear operators and certain function spaces.
Define nonlinear operators
\begin{align}
\scriptt(f) &= (Tf)^d,
\\
\scriptt_*(f) &= (T^*f)^d
\\
S(f) &= \scriptt_*(\scriptt(f)).
\end{align}
In these terms, the Euler-Lagrange equation \eqref{eq:EL} becomes $f=\lambda S(f)$.

Set
\begin{equation}
w=\upsilon_*^{-1} \text{ and } w_*=\upsilon^{-1}.
\end{equation}
The following scales of Banach spaces
$X_\theta, X_{*,\theta}, Y_{*,\theta}$ are adapted to $T$ and $T^*$.
Define these spaces
to be the sets of all equivalence class of measurable functions
on $\reals^d$ for which the following weighted norms are finite:
\begin{align}
\norm{f}_{X_\theta}^{p_\theta} &=\int_{\reals^d} |f|^{p_\theta} w^{\theta p_\theta}
\\
\norm{f}_{X_{*,\theta}}^{p_\theta} &=\int_{\reals^d} |f|^{p_\theta} w_*^{\theta p_\theta}.
\\
\norm{f}_{Y_{*,\theta}}^{q_\theta} &=\int_{\reals^d} |f|^{q_\theta} w_*^{\theta q_\theta/d}.
\end{align}
In particular, $X_{0}=X_{*,0}=L^{p_0(d)}$ and $Y_{*,0}=L^{q_0(d)}$.

These spaces enjoy the following properties.
\begin{lemma} \label{lemma:Xproperties}
(i)
If $\alpha\le\beta$ then $X_\beta\subset X_\alpha$.
More precisely,
there exists $C<\infty$ such that for all $0\le\alpha\le\beta\le 1$,
for all $f\in X_\beta$,
\[\norm{f}_{X_\alpha}\le C\norm{f}_{X_\beta}.\]
(ii)
If $0\le\alpha<\beta\le 1$ then there exists $\delta>0$
such that
$f\in X_\beta\Rightarrow w^\delta f\in X_\alpha$.
\newline
(iii)
If $\alpha,\beta,\gamma,\theta\in[0,1]$ and $\gamma=\theta\alpha+(1-\theta)\beta$, then
\begin{equation}
\norm{f}_{X_{\gamma}}\le
\norm{f}_{X_{\alpha}}^{\theta}
\norm{f}_{X_{\beta}}^{1-\theta}
\end{equation}
for any $f\in X_\alpha\cap X_\beta$.
The function $\gamma\mapsto \norm{f}_{X_\gamma}$
is continuous on $[\alpha,\beta]$.
\newline
(iv)
Let $\alpha,\beta,\gamma,\theta\in[0,1]$ and $\gamma=\theta\alpha+(1-\theta)\beta$.
If a linear operator $L$ maps $X_\alpha\cap X_\beta$ to $X_\alpha+X_\beta$
and is bounded from $X_\alpha$ to $X_\alpha$, and from $X_\beta$ to $X_\beta$,
then
\begin{equation} \label{eq:logconvex}
\norm{L}_{X_\gamma\to X_\gamma}
\le
\norm{L}_{X_\alpha\to X_\alpha}^{\theta}
\norm{L}_{X_\beta\to X_\beta}^{1-\theta}.
\end{equation}
Here $\norm{L}_{X_t\to X_t}$ denotes the norm of $L$ as an operator from $X_t$ to $X_t$.
\end{lemma}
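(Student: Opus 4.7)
The plan is to derive parts (i)--(iii) from Hölder's inequality together with one concrete integrability estimate for $\upsilon_*$, and to obtain (iv) from complex interpolation.

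For (i), I will split
\[
|f|^{p_\alpha}w^{\alpha p_\alpha} = \bigl(|f|^{p_\alpha}w^{\beta p_\alpha}\bigr)\cdot w^{(\alpha-\beta)p_\alpha}
\]
and apply Hölder with dual exponents $p_\beta/p_\alpha$ and $p_\beta/(p_\beta-p_\alpha)$. Substituting $p_\theta=p_0/(1-\theta)$, the product $(\beta-\alpha)p_\alpha\cdot p_\beta/(p_\beta-p_\alpha)$ collapses to $p_0$ independently of $\alpha$ and $\beta$, so the dual factor becomes exactly $w^{-p_0}=\upsilon_*^{p_0}$. The estimate then reduces to the single claim
\[
\int_{\reals^d}\upsilon_*^{p_0}(x)\,dx<\infty,
\]
which I will check by partitioning $\reals^d$ according to which of the three arguments of the minimum in $\upsilon_*$ attains it, integrating first in $x_d$ (producing $\sim |x'|^{-d}$ for $|x'|\ge 1$) and then in $x'$; because $p_0 d=d+1$ the resulting radial integral converges. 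Uniformity of the constant is automatic because the remaining Hölder exponent $(p_\beta-p_\alpha)/p_\beta$ lies in $[0,1]$.

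Part (ii) follows by rerunning the same splitting with an additional factor $w^{\delta p_\alpha}$ absorbed on the left; the algebra now produces the power $\upsilon_*^{p_0(1-\delta/(\beta-\alpha))}$, which remains integrable (by the same calculation) once $\delta$ is taken smaller than a fixed multiple of $\beta-\alpha$, e.g. $\delta<(\beta-\alpha)/(d+1)$. Part (iii) is log-convexity: I will write
\[
|f|^{p_\gamma}w^{\gamma p_\gamma} = \bigl(|f|^{p_\alpha}w^{\alpha p_\alpha}\bigr)^{a}\bigl(|f|^{p_\beta}w^{\beta p_\beta}\bigr)^{b}
\]
with $a=\theta p_\gamma/p_\alpha$ and $b=(1-\theta)p_\gamma/p_\beta$, and the required identity $a+b=1$ is precisely $1/p_\gamma=\theta/p_\alpha+(1-\theta)/p_\beta$, which holds because $1/p_\theta$ is affine in $\theta$; Hölder then yields the stated bound. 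Continuity of $\gamma\mapsto\|f\|_{X_\gamma}$ on $[\alpha,\beta]$ will follow from log-convexity at interior points and from dominated convergence at the endpoints, using the pointwise bound from (i) as a dominating envelope.

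Part (iv) is the Stein--Weiss complex interpolation theorem for weighted $L^p$ spaces applied to the constant analytic family $\{L\}$ with endpoint spaces $X_\alpha$ and $X_\beta$; the intermediate space is identified as $X_\gamma$ because both $1/p_\theta$ and the weight exponent $\theta$ are affine in $\theta$, which is exactly the Stein--Weiss setup, and the conclusion matches the stated geometric-mean bound on the norm. The main obstacle throughout is the integrability computation $\int\upsilon_*^{p_0}\,dx<\infty$ underlying (i) and (ii); once this concrete estimate is in hand, all the remaining steps are routine applications of Hölder or of standard interpolation machinery.
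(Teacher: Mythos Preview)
Your proposal is correct and follows the same approach the paper indicates: the paper's entire proof reads ``These conclusions are simple consequences of H\"older's inequality and complex interpolation,'' and your argument fleshes this out accurately. In particular, your identification and verification of the key integrability estimate $\int_{\reals^d}\upsilon_*^{p_0}\,dx<\infty$ (needed for (i) and (ii), with the threshold $\delta<(\beta-\alpha)/(d+1)$) is exactly the concrete content hidden behind the paper's one-line remark; the algebraic collapse to the exponent $-p_0$ is right, and the Stein--Weiss identification of the intermediate space as $X_\gamma$ is correct since both $1/p_\theta$ and the weight exponent $\theta$ in $w^{\theta p_\theta}$ depend affinely on $\theta$.
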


These conclusions are simple consequences of H\"older's inequality and complex interpolation.

\begin{corollary}
Let $\delta>0$ and suppose that $f\in X_\delta$.
Then the function $t\mapsto \norm{f}_{X_t}$ is continuous on $[0,\delta]$.
\end{corollary}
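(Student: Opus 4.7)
The plan is to recognize that this corollary is a direct specialization of Lemma~\ref{lemma:Xproperties}(iii), combined with the embedding supplied by part (i) of the same lemma. Part (iii) already asserts continuity of $\gamma\mapsto\norm{f}_{X_\gamma}$ on the closed interval $[\alpha,\beta]$ whenever $f\in X_\alpha\cap X_\beta$, so the only thing to verify is the hypothesis.

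First I would observe, via Lemma~\ref{lemma:Xproperties}(i), that $X_\delta\subset X_t$ for every $t\in[0,\delta]$; in particular, $f\in X_\delta$ forces $f\in X_0$. Consequently $f$ lies in $X_0\cap X_\delta$, and invoking part (iii) with $\alpha=0$ and $\beta=\delta$ yields continuity of $t\mapsto\norm{f}_{X_t}$ on all of $[0,\delta]$, which is precisely the conclusion of the corollary.

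In effect, the main (minor) obstacle has already been absorbed into Lemma~\ref{lemma:Xproperties}(iii): continuity on the closed interval, including the endpoints, is the mildly nontrivial part. Upper semicontinuity is immediate from the log-convexity bound $\norm{f}_{X_\gamma}\le\norm{f}_{X_\alpha}^{\theta}\norm{f}_{X_\beta}^{1-\theta}$ by letting $\theta\to 1$ or $\theta\to 0$; matching lower semicontinuity would be obtained by a Fatou-type argument, using that $p_\gamma$ and the weight $w^{\gamma p_\gamma}$ depend continuously on $\gamma$ and that the integrands $|f|^{p_\gamma}w^{\gamma p_\gamma}$ converge pointwise. Since that endpoint analysis is handled inside Lemma~\ref{lemma:Xproperties} itself, the corollary reduces to a one-line deduction from the two parts cited above.
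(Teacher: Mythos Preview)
Your proof is correct and follows essentially the same route as the paper: the paper also deduces continuity directly from the log-convexity inequality of Lemma~\ref{lemma:Xproperties}(iii), invoking the general fact that a nonvanishing log-convex function is continuous (after disposing of the trivial case $f\equiv 0$). Your version simply quotes the continuity clause of part~(iii) rather than re-deriving it, which is entirely legitimate.
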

If $f\equiv 0$, this is trivial; otherwise
it is an immediate consequence of \eqref{eq:logconvex} and the fact
that any nonvanishing log-convex function is continuous.
\qed

Combining Lemmas~\ref{lemma:weighted} and \ref{lemma:Xproperties}
gives a result which will be useful in our proofs.
\begin{lemma}
For any $\theta\in[0,1]$,
$T$ maps $X_\theta$ to $Y_{*,\theta}$,
$\scriptt$ maps $X_\theta$ to $X_{*,\theta}$,
and
$\scriptt_*$ maps $X_{*,\theta}$ to $X_{\theta}$.
Therefore exists $C<\infty$ such that for all $\theta\in[0,1]$

\begin{align}
\norm{T(f)}_{Y_{*,\theta}}&\le C\norm{f}_{X_\theta}
\\
\norm{\scriptt(f)}_{X_{*,\theta}}&\le C\norm{f}_{X_\theta}^d
\\
\norm{\scriptt_*(f)}_{X_{\theta}}&\le C\norm{f}_{X_{*,\theta}}^d.
\end{align}
Likewise
$S$ maps $X_\theta$ to  $X_\theta$
and
\begin{equation}
\norm{S(f)}_{X_{\theta}} \le C\norm{f}_{X_\theta}^{d^2}
\end{equation}
for all $f\in X_\theta$.
\end{lemma}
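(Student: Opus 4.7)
The plan is to read off all four inequalities directly from Lemma~\ref{lemma:weighted} by unwinding the definitions of $X_\theta$, $X_{*,\theta}$, and $Y_{*,\theta}$ and using the pointwise identities $\scriptt(f)=(Tf)^d$ and $\scriptt_*(f)=(T^*f)^d$. The only arithmetic facts I would need are the exponent identity $q_\theta = d\,p_\theta$ (immediate from $p_\theta^{-1}=(1-\theta)d/(d+1)$ and $q_\theta^{-1}=(1-\theta)/(d+1)$), the consequent weight identity $\theta q_\theta/d = \theta p_\theta$, and the translations $w_* = \upsilon^{-1}$, $w = \upsilon_*^{-1}$ built into the definitions.

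First, I would rewrite the first inequality of Lemma~\ref{lemma:weighted} in terms of $w_*$ rather than $\upsilon$, obtaining exactly $\norm{Tf}_{Y_{*,\theta}}\le C\norm{f}_{X_\theta}$, which is the first of the four stated bounds and immediate from the definitions. Next, for the bound on $\scriptt$, I would compute
\[
\norm{\scriptt(f)}_{X_{*,\theta}}^{p_\theta} \;=\; \int_{\reals^d} (Tf)^{d p_\theta}\, w_*^{\theta p_\theta} \;=\; \int_{\reals^d} (Tf)^{q_\theta}\, w_*^{\theta q_\theta/d} \;=\; \norm{Tf}_{Y_{*,\theta}}^{q_\theta},
\]
so that $\norm{\scriptt(f)}_{X_{*,\theta}} = \norm{Tf}_{Y_{*,\theta}}^{q_\theta/p_\theta} = \norm{Tf}_{Y_{*,\theta}}^d$, and the claimed inequality follows from the previous step. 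I would obtain the bound on $\scriptt_*$ by the identical manipulation applied to $T^*$ and the second inequality of Lemma~\ref{lemma:weighted}: the quantity $\norm{\scriptt_*(f)}_{X_\theta}^{p_\theta}$ equals $\int (T^*f)^{q_\theta}\, w^{\theta q_\theta/d}$, which is the $q_\theta$-th power of the left side of that inequality and is therefore controlled by $(C\norm{f}_{X_{*,\theta}})^{q_\theta}$.

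Finally, the bound on $S=\scriptt_*\circ\scriptt$ follows by composing the two preceding bounds: $\norm{S(f)}_{X_\theta}\le C\norm{\scriptt(f)}_{X_{*,\theta}}^d\le C\norm{f}_{X_\theta}^{d^2}$, after absorbing constants. There is no substantive obstacle here once Lemma~\ref{lemma:weighted} is granted; the entire argument is bookkeeping, and the only conceptual point is that the exponents and weights defining $X_\theta,X_{*,\theta},Y_{*,\theta}$ have been chosen precisely so that raising $Tf$ to the $d$-th power converts a $Y_{*,\theta}$-norm into an $X_{*,\theta}$-norm (and similarly for $T^*$). The one step that genuinely requires care is checking the weight identity $\theta q_\theta/d=\theta p_\theta$, which is a direct restatement of $q_\theta=d\,p_\theta$.
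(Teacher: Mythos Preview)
Your proof is correct and is exactly the approach the paper intends: the lemma is stated in the paper as an immediate consequence of Lemma~\ref{lemma:weighted} (together with Lemma~\ref{lemma:Xproperties}, which is not actually needed here), and you have simply unpacked the definitions to make this explicit. The key exponent identity $q_\theta = d\,p_\theta$ that you use is correct (the paper's line ``$q_\theta = p_\theta/d$'' is a typo for $p_\theta = q_\theta/d$).
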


We will need to
apply inequalities of Calder\'on-Zygmund/Littlewood-Paley type
at certain points in the proof, with respect to weighted $L^p$ norms.
There is a well-known condition on the weight which ensures that such
operators are bounded.
Denote by $A_p=A_p(\reals^d)$ the usual Muckenhoupt classes of weights \cite{stein}.
For $1<p<\infty$,
$A_p$ is the set of all locally integrable nonnegative functions $w$ for which the
quantity
\[
[u]_{A_p} = \sup_B \Big(|B|^{-1}\int_B u \Big)
\Big(|B|^{-1}\int_B u^{-1/(p-1)}\Big)^{p-1}
\]
is finite.
Operators of Calder\'on-Zygmund and Littlewood-Paley type
are bounded on $L^p(u)$ for $u\in A_p$ \cite{stein}.

\begin{lemma} \label{lemma:Ap}
Let $P>1$.
There exists $\delta>0$ such that
\[w^t\in A_p
\text{ for all $|t|\le\delta$ and $p\in[P,\infty]$.}\]
\end{lemma}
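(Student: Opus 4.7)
My plan is to deduce this from the BMO regularity of $\log w$, via the classical consequence of the John--Nirenberg inequality that powers of a weight with BMO logarithm lie in $A_p$ for small exponents. First I would reduce to showing that $[w^t]_{A_P}$ is bounded uniformly for $|t|\le\delta$: for $P\le p$ one has $[u]_{A_p}\le[u]_{A_P}$ by Jensen's inequality applied to $|Q|^{-1}\int u^{1-p'}$ (since the ratio $(1-p')/(1-P')=(P-1)/(p-1)$ lies in $(0,1]$, so $s\mapsto s^{(P-1)/(p-1)}$ is concave), and $A_P\subset A_\infty$ with controlled constant; so the case $p=\infty$ comes for free.

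The main step is to prove $\log w\in\mathrm{BMO}(\reals^d)$ with some norm $M=M(d)$. Since $w=\max(1,|x'|^d,|h(x)|^d)$ with $h(x)=x_d+|x'|^2$, one has $\log w = d\max(0,\log|x'|,\log|h|)$. Using the identity $\max(f,g)=\tfrac12(f+g+|f-g|)$ together with the closure of BMO under absolute value (norm at most doubled) and under addition, it suffices to verify that $\log|x'|$ and $\log|h|$ both lie in $\mathrm{BMO}$. For $\log|x'|$: on an axis-aligned cube $Q=Q'\times I$ the mean and the $L^1$-oscillation of $\log|x'|$ over $Q$ coincide with those over $Q'\subset\reals^{d-1}$, reducing to the classical BMO bound for $\log|\cdot|$. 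For $\log|h|$, one invokes the general fact, provable via a Remez-type inequality, that $\log|P|$ lies in $\mathrm{BMO}(\reals^d)$ for every polynomial $P$, with norm depending only on $d$ and $\deg P$; alternatively, a direct estimate on axis-aligned cubes integrates first in $x_d$, where the one-dimensional BMO norm of $\log|\cdot|$ controls the vertical oscillation, and then analyzes the smoothed mean $c(x')=|I|^{-1}\int_I\log|x_d+|x'|^2|\,dx_d$, which is comparable to $\log\max(|x_{d,0}+|x'|^2|,\ell)$ and is sufficiently slowly varying in $x'$ to contribute only a bounded horizontal oscillation.

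Once $\|\log w\|_{\mathrm{BMO}}\le M$ is in hand, the John--Nirenberg inequality supplies a universal $c>0$ such that
\[
  |Q|^{-1}\int_Q w^\mu \le 2\,e^{\mu(\log w)_Q}
  \qquad\text{whenever }|\mu|\le c/M.
\]
Applying this with $\mu=t$ and with $\mu=-t/(P-1)$, both permissible provided $|t|\le\delta:=c\min(1,P-1)/M$, and multiplying the two inequalities inside the $A_P$-quotient, the factors $e^{\mu(\log w)_Q}$ cancel and one obtains $[w^t]_{A_P}\le 2\cdot 2^{P-1}=2^P$, a constant depending only on $d$ and $P$. Combined with the monotonicity reduction of the first step, this yields the lemma with the asserted uniformity.

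The main obstacle is the BMO estimate for $\log|h|$: although $h$ is a polynomial of low degree, it vanishes on the unbounded curved hypersurface $\{x_d+|x'|^2=0\}$, so a hands-on verification on cubes must handle the case of a large cube that meets this zero set carefully. The polynomial/Remez viewpoint provides the cleanest route around this bookkeeping, while the direct approach is just a matter of carrying out the one-dimensional reduction outlined above.
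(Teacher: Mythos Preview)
Your argument is correct and takes a genuinely different route from the paper. The paper proceeds by first proving a direct lemma that the maximum of two $A_p$ weights is again an $A_p$ weight (with constant at most twice the larger of the two), so that from $w=\max(1,|x'|^d,|x_d+|x'|^2|^d)$ it suffices to check $|x'|^{td}\in A_p$ and $|x_d+|x'|^2|^{td}\in A_p$ separately; for the latter it invokes the known fact (from Stein's book) that $|P|^s\in A_p$ for any polynomial $P$ of degree $D$ whenever $sD<p-1$. Your approach replaces the ``max of $A_p$ weights'' lemma by the observation that $\mathrm{BMO}$ is closed under maxima, and replaces the direct $A_p$ verification by John--Nirenberg. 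The two arguments rest on essentially the same underlying ingredient---that $\log|P|\in\mathrm{BMO}$ for polynomials $P$, which is equivalent to the $A_p$ statement the paper cites---but package it differently. Your route is slightly more conceptual and yields the uniform $A_P$ constant (hence the uniformity in $p\ge P$) in one stroke; the paper's route is more hands-on and keeps everything at the level of the $A_p$ condition without passing through $\mathrm{BMO}$.
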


\begin{lemma} \label{lemma:maxofweights}
Let $u,v:\reals^d\to[0,\infty)$
be measurable functions and $p\in(1,\infty)$.
If $u,v\in A_p$ then $\max(u,v)\in A_p$.
\end{lemma}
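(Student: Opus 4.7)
The plan is to use the defining $A_p$ inequality directly, splitting cases according to whether the ball-average of $u$ or the ball-average of $v$ is larger. Write $w=\max(u,v)$. The pointwise identity $w^{-1/(p-1)}=\min(u^{-1/(p-1)},v^{-1/(p-1)})$, together with $w\le u+v$, already reduces the problem to comparing quantities involving $u$ and $v$ separately on each ball $B$.

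Fix a ball $B$, and without loss of generality assume $\int_B u\ge \int_B v$; the opposite case is symmetric. Then $\int_B w\le \int_B u+\int_B v\le 2\int_B u$. On the other hand, $w\ge u$ implies $w^{-1/(p-1)}\le u^{-1/(p-1)}$, so $\int_B w^{-1/(p-1)}\le \int_B u^{-1/(p-1)}$. Multiplying these estimates gives
\[
\Bigl(|B|^{-1}\int_B w\Bigr)\Bigl(|B|^{-1}\int_B w^{-1/(p-1)}\Bigr)^{p-1}
\le 2\Bigl(|B|^{-1}\int_B u\Bigr)\Bigl(|B|^{-1}\int_B u^{-1/(p-1)}\Bigr)^{p-1}
\le 2[u]_{A_p}.
\]
The analogous bound with $v$ in place of $u$ holds in the opposite case, and taking the supremum over $B$ yields $[\max(u,v)]_{A_p}\le 2\max([u]_{A_p},[v]_{A_p})<\infty$.

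There is essentially no obstacle here; the only observation that must not be overlooked is that one should bound the integral of $w^{-1/(p-1)}$ using the weight whose \emph{average} over $B$ is larger (i.e.\ the one controlling $\int_B w$), which works because $w^{-1/(p-1)}\le u^{-1/(p-1)}$ holds pointwise for \emph{either} choice of $u$ or $v$, independent of which weight dominates on average. Hence the case split costs nothing, and the proof is complete in a few lines.
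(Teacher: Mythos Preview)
Your proof is correct and is essentially identical to the paper's own argument: split according to which of $\int_B u$, $\int_B v$ is larger, bound $\int_B w\le 2\int_B u$ via $w\le u+v$, bound $\int_B w^{-1/(p-1)}\le \int_B u^{-1/(p-1)}$ via $w\ge u$, and conclude $[\max(u,v)]_{A_p}\le 2\max([u]_{A_p},[v]_{A_p})$.
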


\begin{proof}
Let $B\subset\reals^d$ be any ball of finite radius.
If $\int_B v\le \int_B u$, then the following reasoning applies:
\[
|B|^{-1}\textstyle\int_B
w
=
|B|^{-1}\textstyle\int_B
\max(u,v)
\le
|B|^{-1}\textstyle\int_B
(u+v)
\le 2
|B|^{-1}\textstyle\int_B
u.
\]
Therefore
\begin{align*}
|B|^{-1}\textstyle\int_B
w
\cdot
\Big(
|B|^{-1}\textstyle\int_B
w^{-1/(p-1)}\Big)^{p-1}
&\le
2|B|^{-1}\textstyle\int_B
u
\cdot
\Big(|B|^{-1}\textstyle\int_B
w^{-1/(p-1)}\Big)^{p-1}
\\
&\le
2|B|^{-1}\textstyle\int_B
u
\cdot
\Big(|B|^{-1}\textstyle\int_B
u^{-1/(p-1)}\Big)^{p-1}
\\
&\le 2[u]_{A_p}
\end{align*}
where $[u]_{A_p}$ is the $A_p$ constant of $u$.

If on the other hand
$\int_B v\ge \int_B u$, then the same reasoning yields the bound
$2[v]_{A_p}$.
Thus $[\max(u,v)]_{A_p}\le 2\max([u]_{A_p},[v]_{A_p})$.
\end{proof}

\begin{proof}[Proof of Lemma~\ref{lemma:Ap}]
$w(x',x_d)=\max(1,\,|x'|^d,\,|x_d+|x'|^2|^d)$.
Therefore by repeated applications of Lemma~\ref{lemma:maxofweights},
it suffices to prove that $|x'|^{td}\in A_p$
and $\big|x_d+|x'|^2\big|^{td}\in A_p$.

Let $s\ge 0$.
It is well known that $u(x')=|x'|^s$ belongs to $A_p(\reals^{d-1})$
if and only if $u^{-1/(p-1)}\in L^1_{\text{loc}}(\reals^{d-1})$,
thus if and only if $s<(p-1)(d-1)$.
It follows easily that $u(x',x_d)=|x'|^s$ belongs to
$A_p(\reals^{d})$ for the same range of $s$, that is, if and only if
$s<(p-1)(d-1)$.

It is elementary that $\big| x_d+|x'|^2 \big|^{s}\in A_p(\reals^d)$
whenever $s/(p-1)<1$ and $2s/(p-1)<d-1$; details are left to the reader.
Alternatively, a general result \cite{stein} p.\ 219 asserts that for any polynomial
$P$ of degree $D$, $|P|^s\in A_p(\reals^d)$ whenever $sD<p-1$;
in our case this implies that $\big| x_d+|x'|^2 \big|^s\in A_p$
for all $s < (p-1)/2$.
\end{proof}

\begin{remark}
The weight $w^{tp_t}$ therefore belongs to $A_q$
whenever $dtp_t<q-1$.
Substituting $p_t=(1-t)^{-1}p_0=(1-t)^{-1}(d+1)/d$ gives
the sufficient condition
\[
q > 1+dt(1-t)^{-1}(d+1)/d
= 1 + t(1-t)^{-1}(d+1)
\]
For $q=p_t$ this becomes
\[
p_t>1+dtp_t
\Leftrightarrow 1>p_t^{-1}+dt
\Leftrightarrow 1>(1-t)d/(d+1)+dt.
\]
This is clearly not satisfied for $t=1$, but is satisfied for $0\le t<1/d^2$.
\end{remark}

Denote by $|D|^r$ the differentiation operators
$\widehat{|D|^rf}(\xi) = |\xi|^r\widehat{f}(\xi)$.
We will use the notation $\abr{z}=(1+|z|^2)^{1/2}$ for $z\in\complex$.
Let $\nabla^*$ denote the divergence of a vector field.

\begin{lemma} \label{lemma:CZweighted}
There exists $C\in(0,\infty)$ such that
for all sufficiently small $t\ge 0$,
\begin{equation} \label{eq:CZweighted}
C^{-1} \norm{h}_{X_t}
\le
\norm{\nabla |D|^{-1}h}_{X_t}
\le C\norm{h}_{X_t}
\end{equation}
for all $h\in X_t$.
\end{lemma}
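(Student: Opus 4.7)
The plan is to recognize $\nabla |D|^{-1}$ as a vector of Riesz transforms and apply the Hunt--Muckenhoupt--Wheeden theorem. Component by component, $\partial_j |D|^{-1}$ is the Fourier multiplier with symbol $i\xi_j/|\xi|$, hence (up to the constant $i$) the $j$-th Riesz transform $R_j$. These are the prototypical Calder\'on--Zygmund operators, so for any weight $u\in A_{p}(\reals^d)$ with $1<p<\infty$, each $R_j$ is bounded on $L^p(u)$.

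By the definition of $X_t$, we have $X_t = L^{p_t}(w^{tp_t})$, so the upper bound in \eqref{eq:CZweighted} reduces to verifying $w^{tp_t}\in A_{p_t}$. This is exactly what is prepared by Lemma~\ref{lemma:Ap} and the remark immediately following its proof: the sufficient condition $1>(1-t)d/(d+1)+dt$ holds for all $0\le t<1/d^2$, giving a quantitative range of small $t$ on which the claim is valid. Applying the weighted $L^{p_t}$ bound to each component $\partial_j |D|^{-1}h$ and summing yields the upper estimate.

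For the lower bound, I would invert $g := \nabla |D|^{-1} h$ back to $h$ through Riesz transforms as well. Since $-\Delta |D|^{-1} = |D|$,
\[
h = |D|^{-1}|D|h = -|D|^{-1}\operatorname{div}(\nabla |D|^{-1} h) = -\sum_{j=1}^d (|D|^{-1}\partial_j)\,g_j,
\]
and each operator $|D|^{-1}\partial_j$ is again (a constant multiple of) the Riesz transform $R_j$. The same Muckenhoupt argument then produces
\[
\norm{h}_{X_t}\le C\sum_{j=1}^d \norm{g_j}_{X_t}\le C'\norm{\nabla |D|^{-1}h}_{X_t},
\]
where in the last step I interpret $\norm{\nabla |D|^{-1}h}_{X_t}$ as the $X_t$-norm of $|\nabla|D|^{-1}h|$ (or equivalently as a sum of the component norms, which are equivalent for a fixed $d$).

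The main obstacle is really just the verification of the Muckenhoupt condition for the precise weight $w^{tp_t}$, but this has already been handled in Lemma~\ref{lemma:Ap} and its accompanying remark; beyond that the proof is a direct invocation of well-known weighted Calder\'on--Zygmund theory. No new harmonic analysis is required here, and the restriction to ``sufficiently small'' $t$ is dictated precisely by the $A_{p_t}$ threshold $t<1/d^2$.
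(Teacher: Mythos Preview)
Your argument is correct and matches the paper's own proof essentially line for line: both recognize $\nabla|D|^{-1}$ and $|D|^{-1}\nabla^*$ as Riesz transforms, invoke the identity $|D|^{-1}\nabla^*\circ\nabla|D|^{-1}=\mathrm{Id}$ for the lower bound, and appeal to Lemma~\ref{lemma:Ap} (and the remark giving $t<1/d^2$) to place $w^{tp_t}$ in $A_{p_t}$ so that weighted Calder\'on--Zygmund theory applies.
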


\begin{proof}
The operators
$\nabla\circ |D|^{-1}$ and $|D|^{-1}\circ\nabla^*$
are Calder\'on-Zygmund operators of classical type.
Moreover, $|D|^{-1}\nabla^*\circ \nabla |D|^{-1}$ is the identity operator.
The weight used to define $X_t$
belongs to $A_{p_t}$, provided that $t$ is sufficiently small.
Therefore \eqref{eq:CZweighted}
follows from the theory of weighted Calder\'on-Zygmund inequalities.
\end{proof}

\begin{lemma} \label{lemma:powersofD}
For all sufficiently small $\varrho\ge 0$,
\[
\norm{|D|^{1-\gamma}f}_{X_\varrho}
\le
\norm{\nabla f}_{X_\varrho}^{1-\gamma}
\norm{f}_{X_\varrho}^\gamma.
\]
\end{lemma}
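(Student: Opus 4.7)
The plan is to deduce the estimate from the Hadamard three-lines lemma applied to the analytic family of Fourier multipliers $U_z f = |D|^z f$ on the strip $\{0\le\operatorname{Re}(z)\le 1\}$, regarded as operators on $X_\varrho$.

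The first step is to bound $U_z$ on the two boundary lines. On $\operatorname{Re}(z)=0$, the symbol $|\xi|^{it}$ satisfies the Mikhlin conditions with constants of polynomial growth in $|t|$. By Lemma~\ref{lemma:Ap}, $w^{\varrho p_\varrho}\in A_{p_\varrho}$ for $\varrho$ sufficiently small, so the weighted Mikhlin multiplier theorem (a standard extension of Lemma~\ref{lemma:CZweighted}) yields
\[
\norm{|D|^{it}g}_{X_\varrho}\le C(1+|t|)^N \norm{g}_{X_\varrho}
\]
for all $g\in X_\varrho$, with $N$ independent of $t$. On the line $\operatorname{Re}(z)=1$, I would factor $|D|^{1+it}f = |D|^{it}(|D|f)$ and rewrite $|D|f = -\sum_j R_j\,\partial_j f$ via the Riesz transforms $R_j = \partial_j |D|^{-1}$, which are bounded on $X_\varrho$ by Lemma~\ref{lemma:CZweighted}. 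Combined with the previous bound this gives
\[
\norm{|D|^{1+it}f}_{X_\varrho}\le C(1+|t|)^N \norm{\nabla f}_{X_\varrho}.
\]

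With these boundary estimates in place, I would pair $|D|^z f$ against a test function $h$ in the dual of $X_\varrho$ and apply the three-lines theorem to
\[
F(z) = \int_{\reals^d} (|D|^z f)(x)\,h(x)\,dx.
\]
For $f$ Schwartz with $\hat f$ supported in a compact set away from the origin, $F$ is entire in $z$ and of polynomial (hence sub-exponential) growth in $|\operatorname{Im}(z)|$ throughout the strip, so Hadamard's theorem applies and yields
\[
|F(1-\gamma)|\le C\,\norm{f}_{X_\varrho}^{\gamma}\norm{\nabla f}_{X_\varrho}^{1-\gamma}\norm{h}_{X_\varrho^*}.
\]
Supremizing over such $h$, and then approximating a general $f$ with $\nabla f\in X_\varrho$ by regularizations of this type (using that smooth functions with Fourier transform vanishing near $0$ and $\infty$ are dense in $X_\varrho$, via the doubling property of the $A_p$ weight), gives the desired inequality with an implicit constant $C$.

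The main technical point is the weighted Mikhlin bound on $|D|^{it}$ with polynomial dependence on $|t|$: this is not explicitly recorded in the paper, but it is a routine consequence of the weighted Calder\'on--Zygmund theory underlying Lemma~\ref{lemma:CZweighted}, given the $A_p$ membership supplied by Lemma~\ref{lemma:Ap}. Once this is available, the complex-interpolation argument is entirely standard, and the polynomial-in-$t$ factors are absorbed harmlessly by the three-lines lemma.
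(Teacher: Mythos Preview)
Your proposal is correct and follows essentially the same route as the paper: both argue by complex interpolation for the analytic family $z\mapsto |D|^z$, using the weighted Calder\'on--Zygmund/Mikhlin bound $\norm{|D|^{i\sigma}}_{X_\varrho\to X_\varrho}\lesssim\langle\sigma\rangle^C$ (via Lemma~\ref{lemma:Ap}) on the imaginary axis, and then replace $|D|f$ by $\nabla f$ through the Riesz transforms as in Lemma~\ref{lemma:CZweighted}. Your version simply spells out the three-lines pairing and the density approximation that the paper's sketch leaves implicit; note also that, as you observe, an implicit constant $C$ appears in the conclusion even though the statement as written omits it.
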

\begin{proof}[Sketch of proof]
Consider the analytic family
of operators $z\mapsto |D|^z$.
For all sufficiently small $t\ge 0$,
$|D|^{i\sigma}$ is bounded on $X_t$ with a norm $\lesssim \abr{\sigma}^C$,
uniformly for all $\sigma\in\reals$. This inequality holds by
Lemma~\ref{lemma:Ap} and the theory of $A_p$ weighted inequalities
for Calder\'on-Zygmund operators.
It follows that
\[
\norm{|D|^{1-\gamma}f}_{X_\varrho}
\le
\norm{|D|f}_{X_\varrho}^{1-\gamma}
\norm{f}_{X_\varrho}^\gamma.
\]
But $|D|f$ may be replaced by $\nabla f$, by Lemma~\ref{lemma:CZweighted}.
\end{proof}

\section{Multlinear Bounds} \label{section:prelim}

Define the multilinear operators
\begin{equation}
\vec{S}(f_{i,j})_{i,j=1}^d
= \prod_{i=1}^d T^*(\prod_{j=1}^d T(f_{i,j})).
\end{equation}
Thus $Sf = \vec{S}(f,f,\cdots,f)$.
We will sometimes write this more simply as $\vec{S}(\vec{f})$
where $\vec{f}=(f_\alpha: \alpha\in A)$, with $A=\{1,2,\cdots,d\}^2$.
Repeated applications of H\"older's inequality lead to the inequality
\begin{equation} \label{CSforS}
|\vec{S}(\vec{f})|\le \Big(\prod_{\alpha\in\{1,2,\cdots,d\}^2}
S(|f_{\alpha}|)\Big)^{1/d^2}.
\end{equation}
Indeed, for any nonnegative functions $g_i$,
\begin{align*}
T^*\prod_{i=1}^d g_i
&=\int_{\reals^{d-1}}
\prod_{i=1}^d g_i(x'-t,x_d+|t|^2)\,dt
\\
&\le
\prod_{i=1}^d
\big(\int_{\reals^{d-1}}
g_i(x'-t,x_d+|t|^2)^d\,dt\big)^{1/d}
\\
&=
\prod_{i=1}^d
(T^*(g_i^d))^{1/d}.
\end{align*}
Applying this to $g_i=|T(f_{i,j})|\le T(|f_{i,j}|)$ gives
\eqref{CSforS}.

\begin{lemma} \label{lemma:holderforXt}
Let $A$ be any finite index set.
Let $\theta_{\alpha},t_\alpha\in[0,1]$ for each $\alpha\in A$.
Suppose that $\sum_{\alpha\in A}\theta_\alpha=1$
and
\begin{equation} \label{convexsum}
1-t = \sum_{\alpha\in A} \theta_\alpha (1-t_\alpha).
\end{equation}
Then for any nonnegative functions $f_\alpha$,
\begin{equation}
\norm{\prod_{\alpha\in A} f_\alpha^{\theta_\alpha}}_{X_t}
\le
\prod_{\alpha\in A} \norm{f_\alpha}_{X_{t_\alpha}}^{\theta_\alpha}.
\end{equation}
\end{lemma}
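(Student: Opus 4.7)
The plan is to reduce the statement to a single application of the generalized Hölder inequality, after some bookkeeping with the exponents. First I would rewrite the hypothesis in two equivalent forms. Since $\sum_\alpha\theta_\alpha=1$, the equation $1-t=\sum_\alpha\theta_\alpha(1-t_\alpha)$ is equivalent to $t=\sum_\alpha\theta_\alpha t_\alpha$. Moreover, since $p_s^{-1}=(1-s)p_0^{-1}$, it is also equivalent to
\[
\frac{1}{p_t}=\sum_{\alpha\in A}\frac{\theta_\alpha}{p_{t_\alpha}}.
\]

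Next I would raise the desired inequality to the $p_t$-th power, so that the goal becomes
\[
\int_{\reals^d}\prod_{\alpha\in A} f_\alpha^{\theta_\alpha p_t}\,w^{tp_t}
\;\le\;\prod_{\alpha\in A}\Big(\int_{\reals^d} f_\alpha^{p_{t_\alpha}}\,w^{t_\alpha p_{t_\alpha}}\Big)^{\theta_\alpha p_t/p_{t_\alpha}}.
\]
The key algebraic step is to split the weight: because $t=\sum_\alpha\theta_\alpha t_\alpha$, one has $w^{tp_t}=\prod_\alpha w^{\theta_\alpha t_\alpha p_t}$, so the integrand factors as $\prod_\alpha\bigl(f_\alpha^{\theta_\alpha p_t}w^{\theta_\alpha t_\alpha p_t}\bigr)$. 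Now set
\[
r_\alpha=\frac{p_{t_\alpha}}{\theta_\alpha p_t}\qquad(\theta_\alpha>0).
\]
By the reformulated hypothesis, $\sum_\alpha r_\alpha^{-1}=p_t\sum_\alpha \theta_\alpha/p_{t_\alpha}=1$, so the $r_\alpha$ are admissible exponents for generalized Hölder. Applying that inequality to the factors $g_\alpha=f_\alpha^{\theta_\alpha p_t}w^{\theta_\alpha t_\alpha p_t}$ gives $\int\prod g_\alpha\le\prod(\int g_\alpha^{r_\alpha})^{1/r_\alpha}$, and a direct calculation shows $g_\alpha^{r_\alpha}=f_\alpha^{p_{t_\alpha}}w^{t_\alpha p_{t_\alpha}}$, which is precisely the integrand of $\norm{f_\alpha}_{X_{t_\alpha}}^{p_{t_\alpha}}$. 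Raising the resulting inequality to the $1/p_t$ power yields the claim.

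I do not expect any real obstacle; the argument is bookkeeping with exponents. The only minor check is that each $r_\alpha\ge 1$, which follows from $\sum_\alpha r_\alpha^{-1}=1$ with nonnegative summands, and the degenerate cases $\theta_\alpha=0$ are handled by simply dropping those indices from both sides (since $f_\alpha^0\equiv 1$ contributes trivially). It is worth noting at the end that the same proof yields the analogous inequality for $X_{*,t}$ and $Y_{*,t}$, since only the abstract form $\norm{f}_{X_t}^{p_t}=\int |f|^{p_t}W^{tp_t}$ for some fixed positive weight $W$ is used.
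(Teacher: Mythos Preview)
Your proposal is correct and is essentially the same argument as the paper's: both split the weight $w^{tp_t}$ across the factors using $t=\sum_\alpha\theta_\alpha t_\alpha$ and then apply generalized H\"older with exponents $p_{t_\alpha}/(\theta_\alpha p_t)$ (the paper calls these $q_\alpha$), obtaining exactly the integrands $f_\alpha^{p_{t_\alpha}}w^{t_\alpha p_{t_\alpha}}$. Your explicit handling of the degenerate case $\theta_\alpha=0$ and the remark about $X_{*,t}$, $Y_{*,t}$ are minor additions but do not change the approach.
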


\begin{proof}
In this proof, products with respect to $\alpha$ are always taken over all $\alpha\in A$.
Recall that $p_t^{-1}=(1-t)p_0^{-1}$.
Introduce the exponents
\[q_\alpha^{-1} = \frac{\theta_\alpha(1-t_\alpha)}{1-t}
= \frac{\theta_\alpha p_t}{p_{t_\alpha}}\]
and
\[
r_\alpha = \frac{p_0 t_\alpha\theta_\alpha }{1-t}
= p_{t_\alpha}t_\alpha\theta_\alpha.
\]

Since
\begin{multline*}
\sum_\alpha r_\alpha
=p_0(1-t)^{-1} \sum_\alpha t_\alpha\theta_\alpha
=p_0(1-t)^{-1} \sum_\alpha (t_\alpha-1)\theta_\alpha
+p_0(1-t)^{-1} \sum_\alpha \theta_\alpha
\\
=p_0(1-t)^{-1}\cdot (t-1)
+p_0(1-t)^{-1}\cdot 1
=p_0t(1-t)^{-1}
=tp_t,
\end{multline*}
one can write
\begin{align*}
\norm{\prod_\alpha f_\alpha^{\theta_\alpha} }_{X_t}^{p_t}
&= \int_{\reals^d}
\prod_\alpha f_\alpha^{p_t \theta_\alpha} w^{tp_t}
\\
&= \int \prod_\alpha \Big( f_\alpha^{p_t \theta_\alpha} w^{r_\alpha}\Big).
\end{align*}
Since
$\sum_\alpha q_\alpha^{-1} = 1$
by the hypothesis \eqref{convexsum}, H\"older's inequality gives
\begin{align*}
\int \prod_\alpha \Big( f_\alpha^{p_t \theta_\alpha} w^{r_\alpha}\Big)
&\le
\prod_\alpha\Big(
\int f_\alpha^{q_\alpha p_t\theta_\alpha} w^{r_\alpha q_\alpha}
\Big)^{1/q_\alpha}.
\end{align*}

The exponents in this last expression can be simplified:
$q_\alpha p_t\theta_\alpha = p_{t_\alpha}$,
while
\[r_\alpha q_\alpha = p_0(1-t_\alpha)^{-1}t_\alpha
=t_\alpha p_{t_\alpha}.\]
Thus the last expression is simply
\[
\prod_\alpha\Big(
\int f_\alpha^{p_{t_\alpha}} w^{t_\alpha p_{t_\alpha}}
\Big)^{p_t \theta_\alpha/p_{t_\alpha} }
=
\prod_\alpha
\norm{f_\alpha}_{X_{t_\alpha}}^{\theta_\alpha p_t}.
\]
Raising everything to the power $1/p_t$ establishes the lemma.
\end{proof}

Now let $A=\{1,\dots,d\}^2$.
\begin{corollary} \label{cor:holderforXt}
Let $t\in[0,1/d^2]$.
Let $\beta\in A$.
Let $(f_\alpha: \alpha\in A)$ satisfy
$f_\beta\in X_{d^2 t}$ and
$f_\alpha\in X_0$ for all $\alpha\ne \beta$.
Then
$\vec{S}(\vec{f})\in X_t$,
and
\begin{equation}
\norm {\vec{S}(\vec{f})}_{X_t}
\le C
\norm{f_\beta}_{X_{d^2 t}}
\prod_{A\owns \alpha\ne\beta} \norm{f_\alpha}_{X_0}.
\end{equation}
\end{corollary}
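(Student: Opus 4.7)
The plan is to combine the pointwise Cauchy--Schwarz-type bound \eqref{CSforS} with the weighted H\"older inequality of Lemma~\ref{lemma:holderforXt} and the diagonal estimate $\norm{S(f)}_{X_\theta}\le C\norm{f}_{X_\theta}^{d^2}$ established just after Lemma~\ref{lemma:weighted}. Concretely, from \eqref{CSforS} we have the pointwise domination
\[
|\vec{S}(\vec{f})|\le \prod_{\alpha\in A} S(|f_\alpha|)^{1/d^2},
\]
so it suffices to bound the $X_t$ norm of this geometric mean.

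To apply Lemma~\ref{lemma:holderforXt}, I take the index set $A=\{1,\dots,d\}^2$ (so $|A|=d^2$), weights $\theta_\alpha = 1/d^2$ for every $\alpha\in A$, and parameters
\[
t_\beta = d^2 t, \qquad t_\alpha = 0 \text{ for } \alpha\ne\beta.
\]
The hypothesis $\sum_\alpha\theta_\alpha=1$ is obvious, and the convex-combination identity is readily checked:
\[
\sum_{\alpha\in A}\theta_\alpha(1-t_\alpha)
=\frac{1}{d^2}\bigl((1-d^2 t) + (d^2-1)\bigr)
= 1 - t.
\]
Note that the assumption $t\le 1/d^2$ is precisely what makes $t_\beta = d^2 t\in[0,1]$, so that $X_{t_\beta}$ is one of the spaces to which the lemma applies.

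Lemma~\ref{lemma:holderforXt} then yields
\[
\Bigl\|\prod_{\alpha\in A} S(|f_\alpha|)^{1/d^2}\Bigr\|_{X_t}
\le \prod_{\alpha\in A}\norm{S(|f_\alpha|)}_{X_{t_\alpha}}^{1/d^2}
= \norm{S(|f_\beta|)}_{X_{d^2 t}}^{1/d^2}\prod_{\alpha\ne\beta}\norm{S(|f_\alpha|)}_{X_0}^{1/d^2}.
\]
Applying the diagonal bound $\norm{S(g)}_{X_\theta}\le C\norm{g}_{X_\theta}^{d^2}$ (valid for $\theta=d^2 t\in[0,1]$ and for $\theta=0$) to each factor, and taking $1/d^2$-th powers, converts each factor $\norm{S(|f_\alpha|)}_{X_{t_\alpha}}^{1/d^2}$ into $C^{1/d^2}\norm{f_\alpha}_{X_{t_\alpha}}$. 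Combining these estimates with the pointwise bound above produces the desired inequality with a constant that is a power of $C$ independent of $t\in[0,1/d^2]$.

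There is no real obstacle: every ingredient is already in place, and the argument is essentially algebraic bookkeeping. The only point requiring a moment's care is verifying that the chosen $t_\alpha$'s satisfy the convex-combination identity \eqref{convexsum} of Lemma~\ref{lemma:holderforXt}, which is exactly where the restriction $t\le 1/d^2$ enters. After that, stringing together Lemma~\ref{lemma:holderforXt} and the diagonal $X_\theta$-bound for $S$ yields the corollary immediately.
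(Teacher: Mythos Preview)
Your argument is correct and matches the paper's proof essentially line for line: the same pointwise bound \eqref{CSforS}, the same choice $\theta_\alpha=1/d^2$, $t_\beta=d^2t$, $t_\alpha=0$ for $\alpha\ne\beta$ in Lemma~\ref{lemma:holderforXt}, followed by the diagonal bound $\norm{S(g)}_{X_\theta}\le C\norm{g}_{X_\theta}^{d^2}$. You even spell out the verification of \eqref{convexsum} more explicitly than the paper does.
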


\begin{proof}
$|\vec{S}(\vec{f})|\le \prod_\alpha S(|f_\alpha|)^{1/d^2}$.
Let $\theta_\alpha=1/d^2$ for all $\alpha\in A$.
Let $t_\alpha=0$ for all $\alpha\ne\beta$,
and $t_\beta = d^2 t$.
These parameters satisfy the hypotheses \eqref{convexsum}
of Lemma~\ref{lemma:holderforXt}.

Therefore
\begin{multline*}
\norm{\vec{S}(\vec{f})}_{X_t}
\le
\norm{ \prod_\alpha S(|f_\alpha|)^{1/d^2} }_{X_t}
\le \prod_\alpha \norm{ S(|f_\alpha|)}_{X_{t_\alpha}}^{\theta_\alpha}
\\
\le C\prod_\alpha \norm{ f_\alpha}_{X_{t_\alpha}}^{d^2\theta_\alpha}
= C
\norm{f_\beta}_{X_{d^2t}}
\prod_{\alpha\ne\beta} \norm{ f_\alpha}_{X_0}.
\end{multline*}
\end{proof}

\section{Smoothing}

Consider the operators
$T_\rho f(x) = \int_{|t|\le \rho} f(x'-t,x_d-|t|^2)\,dt$.

\begin{lemma} \label{lemma:locallygainsmoothness}
There exists $\alpha_0>0$ such that for any
$\alpha\in[0,\alpha_0]$,
there exist $C,A<\infty$
such that for all $\rho\in[1,\infty)$,
\[
\norm{|D|^\alpha T_\rho f}_{L^2(\reals^d)}
\le C\rho^A\norm{f}_{L^2(\reals^d)}
\]
for all $f\in\lt(\reals^d)$.

For any $p\in(1,\infty)$ there exist $\eta>0$
and $C,A<\infty$
such that for all $\rho\in[1,\infty)$,
\[
\norm{|D|^\eta T_\rho f}_{L^p(\reals^d)}
\le C\rho^A\norm{f}_{L^p(\reals^d)}
\]
for all $f\in L^p(\reals^d)$.

\end{lemma}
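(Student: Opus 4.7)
The operator $T_\rho$ is convolution with the measure $\mu_\rho$ defined as the image of Lebesgue measure on $\{t\in\reals^{d-1}:|t|\le\rho\}$ under the lifting $t\mapsto(t,|t|^2)$. The plan is to establish the $L^2$ assertion by a pointwise bound on $\hat\mu_\rho$ together with Plancherel, and then to deduce the $L^p$ assertion by complex interpolation using the Stein analytic family $z\mapsto|D|^z T_\rho$.

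For the $L^2$ step the first reduction is rescaling: the substitution $t=\rho s$ gives $\hat\mu_\rho(\xi)=\rho^{d-1}\hat\mu_1(\rho\xi',\rho^2\xi_d)$, so it suffices to control $\hat\mu_1(\xi)$ uniformly. Orienting coordinates so that $\xi'=(\xi'_1,0,\dots,0)$ with $\xi'_1\ge 0$, the inner one-dimensional integral in $t_1$ (with the remaining $t''=(t_2,\dots,t_{d-1})$ fixed) has phase $\psi(t_1)=t_1\xi'_1+t_1^2\xi_d$, with $\psi''\equiv 2\xi_d$ and $\psi'(t_1)=\xi'_1+2t_1\xi_d$. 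When $|\xi_d|\gtrsim\xi'_1$, Van der Corput's second-derivative lemma bounds the inner integral by $C|\xi_d|^{-1/2}$; when $\xi'_1\gtrsim|\xi_d|$ and $\xi'_1\gtrsim 1$, the first derivative $\psi'$ has magnitude comparable to $\xi'_1$ throughout $|t_1|\le 1$ and integration by parts bounds the integral by $C/\xi'_1$. Integrating over the bounded $t''$-region and combining the cases yields $|\hat\mu_1(\xi)|\le C(1+|\xi|)^{-1/2}$. Rescaling back gives $|\hat\mu_\rho(\xi)|\le C\rho^{d-1}(1+\rho|\xi|)^{-1/2}$, which for $\rho\ge 1$ implies $(1+|\xi|)^{1/2}|\hat\mu_\rho(\xi)|\le C\rho^{d-1}$. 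Plancherel then delivers the $L^2$ assertion with $\alpha_0=1/2$ and $A=d-1$.

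For the $L^p$ step I consider the analytic family $z\mapsto|D|^z T_\rho$ on the strip $0\le\Re(z)\le 1/2$. On the line $\Re(z)=0$, the multiplier $|D|^{i\sigma}$ is of Mikhlin--H\"ormander type with $L^r\to L^r$ norm $\lesssim\abr{\sigma}^N$ for every $r\in(1,\infty)$; combined with $\|T_\rho\|_{L^r\to L^r}\le c\rho^{d-1}$, which follows from Young's inequality since $\mu_\rho$ has total mass $c\rho^{d-1}$, this yields $L^r\to L^r$ bounds of $C\abr{\sigma}^N\rho^{d-1}$. On the line $\Re(z)=1/2$, the previous step gives $L^2\to L^2$ bounds of $C\abr{\sigma}^N\rho^{d-1}$. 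Given any target $p\in(1,\infty)$, I would choose $r$ close to $1$ (if $p<2$) or close to $\infty$ (if $p>2$) and invoke Stein's interpolation theorem with the endpoints $(0,L^r)$ and $(1/2,L^2)$; the intermediate line $\Re(z)=\theta/2$ sees $L^p$-boundedness of $|D|^{\theta/2}T_\rho$ for a suitable $\theta\in(0,1)$, giving $\eta=\theta/2>0$ and the desired bound $\|\,|D|^\eta T_\rho\,\|_{L^p\to L^p}\le C\rho^{d-1}$. The polynomial growth in $\sigma$ on the two lines is admissible for Stein interpolation, and the polynomial dependence on $\rho$ propagates through each endpoint.

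The main obstacle is the low regularity of the cutoff $|t|\le\rho$ defining $\mu_\rho$: surface-measure decay theorems of Littman--Strichartz type typically require smooth amplitudes, so the sharp decay exponent $(d-1)/2$ is not directly available. The Van der Corput argument sketched above circumvents this by handling the sharp indicator directly, at the price of a suboptimal exponent $1/2$; since the lemma only requires some positive $\alpha_0$, this is amply sufficient. Aside from this, the bookkeeping of the polynomial $\rho$-dependence through rescaling and interpolation is routine.
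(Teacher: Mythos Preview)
Your proposal is correct and follows essentially the same approach as the paper: stationary phase (Van der Corput) plus Plancherel for the $L^2$ estimate, then Stein interpolation of the analytic family $z\mapsto|D|^zT_\rho$ between that $L^2$ smoothing bound and the trivial $L^r\to L^r$ bound coming from Young's inequality and the total mass $c\rho^{d-1}$ of $\mu_\rho$. The paper merely says ``routine application of the method of stationary phase'' and refers to Stein's book, so you have in fact supplied more detail; your observation that the sharp cutoff precludes a direct appeal to Littman-type decay, and that Van der Corput with the suboptimal exponent $1/2$ suffices, is exactly the right way to handle this.
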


The proof of the first conclusion
is a routine application of the method of stationary phase.
See \cite{stein} for calculations of this type.

$T_\rho$ is defined by convolution with a finite measure of total
variation $O(\rho^d)$, and consequently
satisfies $\norm{T_\rho f}_{L^p}\le C\rho^{d-1}\norm{f}_{L^p}$
for all $p\in[1,\infty]$.
The second conclusion is by interpolating between these simple bounds
and the first conclusion, using an analytic family of operators
$z\mapsto |D|^z\circ T_\rho$.
\qed

Let $B_R=\{x\in\reals^d: |x|\le R\}$.
\begin{corollary} \label{cor:smoothing}
Let $t>0$. There exists $\gamma=\gamma(t)>0$
such that for any $f\in X_t$,
$|D|^\gamma(Tf)\in L^{q_0}_{\text{loc}}$.
More quantitatively,
for any $R<\infty$ there exists $C<\infty$
such that for any $f\in X_t$,
\[
\norm{|D|^\gamma(Tf)}_{L^{q_0}(B_R)}
\le C\norm{f}_{X_t}.
\]
\end{corollary}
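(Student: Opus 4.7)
The plan is to split $Tf = T_\rho f + T^{\sharp} f$ with $T^{\sharp}:= T - T_\rho$ and $\rho \ge 2R$ fixed (depending only on $R$). The tail observation is that for $x \in B_R$ and $|t|\ge \rho$, the point $y = (x'-t, x_d - |t|^2)$ satisfies $|y'| \ge \rho/2$, so $w(y) \ge (\rho/2)^d$ and $T^{\sharp} f|_{B_R}$ depends on $f$ only through $f\chi_F$ where $F := \{|y'|\ge \rho/2\}$. Exploiting $w^{-p_0}(y)\le \min(1,|y'|^{-d-1},|y_d+|y'|^2|^{-d-1})$ and integrating in $y_d$ before $y'$ yields $\int_F w^{-p_0}\,dy \le C\rho^{-1}$, hence by H\"older $\norm{f\chi_F}_{L^{p_0}} \le C\rho^{-t/p_0}\norm{f}_{X_t}$; the analogous estimate $\norm{f\chi_{E_k}}_{L^{p_t}} \le C(2^k)^{-td}\norm{f}_{X_t}$ holds on $E_k := \{|y'|\sim 2^k\}$.

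For $T_\rho f$, Lemma~\ref{lemma:locallygainsmoothness} applied at $p = q_0$ furnishes $\eta > 0, A < \infty$ with $\norm{|D|^\eta T_\rho g}_{L^{q_0}}\le C\rho^A\norm{g}_{L^{q_0}}$; inspecting its proof (interpolate the first conclusion of the Lemma, which provides $\alpha_0>0$-order $L^2$-smoothing whose $\rho$-exponent is strictly below $d-1$ by stationary phase, against the trivial bound $\norm{T_\rho}_{L^\infty\to L^\infty}=O(\rho^{d-1})$) reveals crucially that $A<d-1$. Combining with $\norm{T_\rho g}_{L^{q_0}} \le \norm{T|g|}_{L^{q_0}} \le C\norm{g}_{L^{p_0}}$ and applying complex interpolation on the analytic family $z\mapsto |D|^{z\eta}T_\rho$ at $z = \tau := td/(d-1)\in(0,1)$ yields $\norm{|D|^{\gamma}T_\rho g}_{L^{q_0}} \le C\rho^{\tau A}\norm{g}_{L^{p_t}}$ with $\gamma := \tau\eta > 0$. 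Since $X_t \hookrightarrow L^{p_t}$ (as $w\ge 1$) and $\rho$ is fixed, this controls the near piece. For $T^\sharp f$, I decompose $T^\sharp = \sum_{k\ge k_0} T^{(k)}$ with $2^{k_0}\sim\rho$ and $T^{(k)}:=T_{2^{k+1}}-T_{2^k}$; each inherits the smoothing at scale $2^k$, so applied to $u=f\chi_{E_k}$ (using $T^{(k)}f|_{B_R} = T^{(k)}(f\chi_{E_k})|_{B_R}$) it gives $\norm{|D|^{\gamma}T^{(k)}(f\chi_{E_k})}_{L^{q_0}(\reals^d)} \le C(2^k)^{\tau A - td}\norm{f}_{X_t}$, geometrically summable since $\tau A < td$.

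The main obstacle is the nonlocality of $|D|^\gamma$: the global bound on $T^{(k)}(f\chi_{E_k})$ must be transferred to a local bound on $|D|^\gamma T^{(k)}f$ over $B_R$. Handle this via a smooth cutoff $\phi\in C_c^\infty(B_{R+2})$ with $\phi\equiv 1$ on $B_{R+1}$: for $2^k$ large, $T^{(k)}(f\chi_{E_k^c})$ vanishes on $B_{R+2}$, so $\phi T^{(k)}f = \phi T^{(k)}(f\chi_{E_k})$ lies in $W^{\gamma,q_0}(\reals^d)$ with norm controlled by the sum of $C(2^k)^{\tau A-td}\norm{f}_{X_t}$ and $C(2^k)^{-t/p_0}\norm{f}_{X_t}$ (the latter via $\norm{T^{(k)}(f\chi_{E_k})}_{L^{q_0}}\le C\norm{f\chi_{E_k}}_{L^{p_0}}$). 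The residual $(1-\phi)T^{(k)}f$ vanishes on $B_{R+1}$; the Riesz representation $|D|^\gamma u(x)=c\int(u(x)-u(y))|x-y|^{-d-\gamma}\,dy$ combined with H\"older then bounds its $|D|^\gamma$-value at $x\in B_R$ by another constant multiple of $(2^k)^{-t/p_0}\norm{f}_{X_t}$. Summing in $k$ and combining with the near-piece bound yields $\norm{|D|^\gamma Tf}_{L^{q_0}(B_R)}\le C\norm{f}_{X_t}$; for $t\ge(d-1)/d$, the nesting $X_t\subset X_{t'}$ for $t'<t$ from Lemma~\ref{lemma:Xproperties} reduces to the small-$t$ case already treated.
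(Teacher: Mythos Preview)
Your approach differs substantively from the paper's. The paper decomposes $f$ into dyadic shells in $\reals^d$ and interpolates, for each shell at scale $\rho$, between the smoothing bound of Lemma~\ref{lemma:locallygainsmoothness} (which \emph{loses} $\rho^A$) and the weighted inequality $T:X_{t/2}\to Y_{*,t/2}\hookrightarrow L^{q_1}$, $q_1>q_0$, from Lemma~\ref{lemma:weighted} (which \emph{gains} $\rho^{-\delta}$); taking the interpolation parameter $\theta$ small enough, the gain beats any finite $A$. You instead decompose the operator in the integration variable, avoid the weighted inequalities altogether, and stake the summability of the tail on the quantitative claim $A<d-1$.

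That claim is the genuine gap. Lemma~\ref{lemma:locallygainsmoothness} only asserts \emph{some} finite $A$; ``inspecting its proof'' does not deliver $A<d-1$. What you would actually need is $\sup_\xi|\xi|^{\alpha_0}|\widehat{\mu_\rho}(\xi)|\le C\rho^{B}$ with $B<d-1$, and the sharp cutoff $\chi_{\{|t|\le\rho\}}$ produces boundary contributions that require a separate argument. This may well hold for small $\alpha_0$, but you have not proved it; without it your sum $\sum_k(2^k)^{\tau A-td}$ diverges. The paper's route via Lemma~\ref{lemma:weighted} is designed precisely to be insensitive to the size of $A$. There is also a secondary error in your nonlocality step: the bound $\big||D|^\gamma[(1-\phi)T^{(k)}f](x)\big|\le C(2^k)^{-t/p_0}\|f\|_{X_t}$ for $x\in B_R$ is false as written, since $(1-\phi)T^{(k)}f$ sees values of $T^{(k)}f(y)$ for all $|y|>R+1$, and for large $|y'|$ these sample $f$ well outside $E_k$ with no decay in $k$. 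This is easily fixed by applying the $\phi/(1-\phi)$ split once to $T^\sharp f$ before decomposing in $k$, but your writeup applies it term by term.
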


\begin{proof}
There exists $\delta>0$ such that
whenever $|x|\ge 1$, $w(x)\ge c|x|^{d/2}$.
Indeed, \begin{multline*}
w(x) = \max(1,\,|x'|^d,\,|x_d-|x'|^2|^d)
\\
\gtrsim
\max(\abr{x'},\abr{x_d-|x'|^2}^{1/2})^d
\ge c \max(\abr{x'},\abr{x_d}^{1/2})^d.
\end{multline*}

Let $1\le R,\rho<\infty$.
Define $T^\natural f$ to be the restriction
of $Tf$ to $B(0,R)$.
Let $t>0$ and
consider any function $g\in X_t$ supported in $B(0,\rho)\setminus B(0,\rho/2)$
satisfying $\norm{g}_{X_t}\le 1$.

Then $\norm{g}_{X_{t/2}}=O(\rho^{-\delta})$,
where $\delta>0$ depends only on $t$.
Therefore $\norm{Tg}_{Y_*,{t/2}}\lesssim \rho^{-\delta}$.
The space $Y_{*,t/2}$ embeds continuously into $L^{q_1}$
for some $q_1>q_0$, yielding
\begin{equation} \label{smoothing1}
\norm{T^\natural g}_{L^{q_1}}\lesssim \rho^{-\delta}.
\end{equation}

On the other hand,
because $g$ is supported in $B(0,\rho)$
and $T^\natural g$ is the restriction of $Tg$ to $B(0,R)$,
$T^\natural g$ equals the restriction of $T_s g$ to $B(0,R)$
where $s=C(R+\rho)\le CR\rho$. Therefore
by Lemma~\ref{lemma:locallygainsmoothness},
there exists $\eta>0$ such that
\begin{equation} \label{smoothing2}
\norm{|D|^\eta T^\natural g}_{L^{p_0}}
\le CR^A\rho^A \norm{g}_{L^{p_0}}
\le CR^A\rho^A \norm{g}_{X_t}
\le CR^A\rho^A
\end{equation}
for a certain finite constant $A$, which depends only on the dimension $d$.

By interpolating between  \eqref{smoothing1} and \eqref{smoothing2}
using the natural analytic family of operators,
we find that for any $\theta\in [0,1]$,
\begin{equation}
\norm{|D|^{\eta\theta} T^\natural g}_{L^{Q(\theta)}}
\le CR^{A\theta} \rho^{A\theta-(1-\theta)\delta}
\end{equation}
where $Q(\theta)^{-1} = \tfrac12 \theta + \frac1{q_1}(1-\theta)$.
Then $Q(0)^{-1}=q_1^{-1}<q_0^{-1}$.
Therefore for all sufficiently small
$\theta>0$, $A\theta-(1-\theta)\delta<0$
and
$Q(\theta)^{-1}<q_0^{-1}$.
Fix one such parameter $\theta$.
By H\"older's inequality,
\[
\norm{|D|^{\eta\theta}T^\natural g}_{L^{q_0}}
\le CR^C
\norm{|D|^{\eta\theta}T^\natural g}_{L^{Q(\theta)}}.
\]
Therefore in all,
\[
\norm{|D|^{\eta\theta} T^\natural g}_{L^{q_0}}
\le CR^{C} \rho^{-\eps}
\]
for some $C<\infty$ and $\eps,\theta,\eta>0$.

We have proved that
\begin{equation} \label{smoothing3}
\norm{|D|^{\gamma} Tg}_{L^{q_0}(B(0,R)}
\le CR^{C} \rho^{-\eps},
\end{equation}
provided that $g$ is supported on $B(0,\rho)\setminus B(0,\rho/2)$.
Here $\eps,\gamma>0$.
The same reasoning gives
\begin{equation} \label{smoothing4}
\norm{|D|^{\gamma} Tg}_{L^{q_0}(B(0,R)}
\le CR^{C} \rho^C
\end{equation}
if $g$ is merely assumed to be supported on $B(0,\rho)$.

The proof of Corollary~\ref{cor:smoothing}
is concluded by decomposing a general function $f$
as $\sum_{k=0}^\infty f_k$ where $f_0$ is supported
on $B(0,R)$ and $f_k$ on $B(0,2^k)\setminus B(0,2^{k-1})$
for all $k\ge 1$. Apply \eqref{smoothing3} to the contribution of
$f_k$ for all $k\ge 1$, and \eqref{smoothing4} for $k=0$,
and sum over $k$.
\end{proof}

\section{Gaining some decay}
Our goal here is to prove:
\begin{proposition} \label{prop:gaindecay}
Let $d\ge 2$ and $\lambda\in\complex$. Let $f\in L^{p_0}(\reals^d)=X_0(\reals^d)$
be a solution of the generalized Euler-Lagrange equation $f = \lambda Sf$.
Then there exists $t>0$ such that $f\in X_t$.
\end{proposition}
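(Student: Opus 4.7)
The plan is to leverage the multilinear structure of $S$ by decomposing $f$ into a nice part and a small remainder, expanding the Euler-Lagrange equation multilinearly, and then applying a fixed-point argument. First, for a small parameter $\eta>0$ to be chosen, I would pick $R, N$ large enough that $f_2 := f\cdot\mathbf{1}_{\{|x|\le R\}\cap\{|f|\le N\}}$ satisfies $\norm{f-f_2}_{X_0} < \eta$, and set $f_1 := f - f_2$. Then $f_2$ is bounded with compact support, and because $w$ is locally bounded, $f_2\in X_s$ for every $s\in[0,1]$.

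Substituting into $f = \lambda S(f)$ and expanding via the multilinearity of $\vec{S}$ yields
\[
f = \sum_{E\subseteq A}\lambda\vec{S}(\vec{g}^E),
\]
where $A = \{1,\ldots,d\}^2$ and $g^E_\alpha$ equals $f_2$ when $\alpha\in E$ and $f_1$ otherwise. For each $E\ne\emptyset$, Corollary~\ref{cor:holderforXt} with an $f_2$-slot as the distinguished slot gives, for any $t\in(0,1/d^2]$,
\[
\norm{\vec{S}(\vec{g}^E)}_{X_t} \le C\norm{f_2}_{X_{d^2 t}}\norm{f}_{X_0}^{d^2-1},
\]
so that $H := \sum_{E\ne\emptyset}\lambda\vec{S}(\vec{g}^E) \in X_t$. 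The lone remaining term $\lambda S(f_1)$ is controlled only in $X_0$ via $\norm{\lambda S(f_1)}_{X_0}\le C|\lambda|\norm{f_1}_{X_0}^{d^2}\le C|\lambda|\eta^{d^2}$ (using the $X_\theta$-boundedness of $S$ that follows from Lemma~\ref{lemma:weighted}). Hence $f = \lambda S(f_1) + H$ represents $f$ as a function in $X_t$ modulo an arbitrarily small $X_0$-remainder.

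To promote this to $f\in X_t$, I view the Euler-Lagrange equation as the fixed-point problem $v = \Phi(v)$ for
\[
\Phi(v) := \lambda S(f_2 + v) - f_2,
\]
which has $f_1$ as a fixed point in $X_0$. The Taylor expansion
\[
\Phi(v) = \bigl(\lambda S(f_2) - f_2\bigr) + \sum_{k=1}^{d^2-1}\tfrac{\lambda}{k!}D^k S(f_2)[v^{\otimes k}] + \lambda S(v)
\]
has the feature that every term except the last contains at least one copy of $f_2$; Corollary~\ref{cor:holderforXt}, with $f_2$ in the distinguished slot, bounds the $X_t$-norm of such a term by $C\norm{f_2}_{X_{d^2 t}}\norm{f_2}_{X_0}^{d^2-1-k}\norm{v}_{X_0}^k$. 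The last term is handled by the bound $\norm{S(v)}_{X_t}\le C\norm{v}_{X_t}^{d^2}$. For $\eta$ sufficiently small I would verify that $\Phi$ is a contraction on a suitable ball in $X_t$; its unique fixed point there must coincide with $f_1$ by a uniqueness argument in a commonly-contained $X_0$-ball. It follows that $f_1\in X_t$, and so $f = f_1 + f_2\in X_t$.

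The main obstacle is making this contraction precise. The pure-$v$ term $\lambda S(v)$ admits no distinguished-slot improvement, so its Lipschitz bound on $X_t$ involves $\norm{v}_{X_t}^{d^2-1}$ rather than $\norm{v}_{X_0}^{d^2-1}$, forcing the contraction ball to have radius small relative to $|\lambda|^{-1/(d^2-1)}$. Simultaneously, the mixed Taylor terms carry the factor $\norm{f_2}_{X_{d^2 t}}$, which grows as $f_2$ is chosen to approximate $f$ more closely in $X_0$; this must be absorbed by the compensating powers $\norm{v}_{X_0}^k$ with $k\ge 1$. Orchestrating these competing constraints, and matching the resulting $X_t$-fixed point with $f_1$, is the central technical task.
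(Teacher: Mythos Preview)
Your setup and first step are right and match the paper: decompose $f=f_1+f_2$ with $f_2$ bounded of compact support and $\|f_1\|_{X_0}<\eta$, and use Corollary~\ref{cor:holderforXt} to put every multilinear term containing at least one $f_2$ into $X_t$; this is exactly Lemma~\ref{lemma:improvespace}. The gap is the choice of fixed-point operator. Your $\Phi(v)=\lambda S(f_2+v)-f_2$ is \emph{not} a contraction on any small ball: its derivative at $v=f_1$ is $\lambda DS(f)$, and by homogeneity $\lambda DS(f)[f]=d^2\lambda S(f)=d^2 f$, so this linear map has eigenvalue $d^2$ on $X_0$ and your uniqueness step in a ``commonly-contained $X_0$-ball'' cannot work. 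In terms of your Taylor expansion, the $k=1$ term contributes a Lipschitz constant of order $|\lambda|\,\|f_2\|_{X_{d^2t}}\|f_2\|_{X_0}^{d^2-2}$, which for small $t,\eta$ is close to $|\lambda|\,\|f\|_{X_0}^{d^2-1}$ and is bounded \emph{below} by the Euler--Lagrange relation itself. The compensating factor $\|v\|_{X_0}^k$ you invoke drops to $\|v\|_{X_0}^{k-1}$ in the Lipschitz estimate and leaves no smallness at $k=1$; this is the obstruction, not merely a matter of ``orchestrating competing constraints.''

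The paper's remedy is to \emph{freeze} the mixed terms. From your own expansion $f_1=\lambda S(f_1)+(H-f_2)$, so one takes instead $A(h):=\lambda S(h)+(H-f_2)$, whose only $h$-dependence is the pure top-degree term. This is the paper's $A_\eps$, with $H-f_2=\scriptl(f_2,f_1)$. Now $\|A(h)-A(\tilde h)\|_{X_t}\le C|\lambda|\big(\|h\|_{X_t}+\|\tilde h\|_{X_t}\big)^{d^2-1}\|h-\tilde h\|_{X_t}$, which \emph{is} small on a small ball. Since $H-f_2$ is $O(\eta)$ in $X_0$ and bounded in $X_{1/d^2}$, log-convexity of $t\mapsto\|\cdot\|_{X_t}$ makes $\|H-f_2\|_{X_t}$ small for some $t=t(\eta,f)>0$; the contraction then produces a fixed point in a small $X_t$-ball, which also lies in the small $X_0$-ball (where the identical contraction argument runs with $t=0$), hence equals $f_1$.
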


To begin the proof,
consider any decomposition $f=\varphi+g$
where $\varphi\in L^\infty$ has bounded support.
Rewrite the equation $f=\lambda Sf$ as
\begin{align}
g &= \lambda Sg + \scriptl(\varphi,g)
\\
\scriptl(\varphi,g) &= \lambda S(\varphi+g)-\lambda Sg-\varphi.
\label{scriptldefn}
\end{align}
Then
\[
\norm{\scriptl(\varphi,g)}_{X_t}
\le
\norm{g}_{X_t} + C\norm{g}_{X_t}^{d^2}
\]
by the representation $\scriptl(\varphi,g)=g-\lambda Sg$
and the basic $X_t$ bound for $S$.
On the other hand, by expanding $S(\varphi+g)$ as a sum of
$d^2$ terms $\vec{S}(\cdot)$ and invoking \eqref{CSforS} along with
the bound $\norm{S(h)}_{X_t}\le C\norm{h}_{X_t}^{d^2}$
gives an alternative majorization
\[
\norm{\scriptl(\varphi,g)}_{X_t}
\le C_\varphi + C_\varphi \norm{g}_{X_t}^{d^2-1}.
\]

This bound can be improved;
the operator $g\mapsto\scriptl(\varphi,g)$ improves integrability in
the following sense.
\begin{lemma} \label{lemma:improvespace}
For any bounded, compactly supported function $\varphi$
there exists $C_\varphi<\infty$ such that for all $g\in X_0$,
the function $\scriptl(\varphi,g)$ belongs to $X_{1/d^2}$, and
\begin{equation}
\norm{\scriptl(\varphi,g)}_{X_{1/d^2}}
\le
C_\varphi
+
C_\varphi \norm{g}_{X_0}^{d^2-1}.
\end{equation}
\end{lemma}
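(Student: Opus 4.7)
The plan is to exploit the multilinearity of $\vec{S}$: writing $S(\varphi+g)$ as a sum over the $2^{d^2}$ choices of either $\varphi$ or $g$ in each of the $d^2$ slots, I will use the fact that $\varphi$, being bounded with compact support, lies in the top space $X_1$, and then invoke Corollary~\ref{cor:holderforXt} at the endpoint $t = 1/d^2$ to place each mixed term in $X_{1/d^2}$.

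Concretely, setting $A = \{1,\ldots,d\}^2$ and, for each $B \subset A$, $h^B_\alpha = \varphi$ if $\alpha \in B$ and $h^B_\alpha = g$ otherwise, $d^2$-linearity gives $S(\varphi + g) = \sum_{B \subset A} \vec{S}(\vec{h}^B)$. The $B = \emptyset$ term is exactly $S(g)$, so from \eqref{scriptldefn},
\[
\scriptl(\varphi,g) \;=\; \lambda \sum_{\emptyset \ne B \subset A} \vec{S}(\vec{h}^B) \;-\; \varphi.
\]
For each nonempty $B$, pick some $\beta \in B$; then $h^B_\beta = \varphi$, while for $\alpha \ne \beta$ one has $h^B_\alpha \in \{\varphi,g\} \subset X_0$. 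Applying Corollary~\ref{cor:holderforXt} with $t = 1/d^2$ (so $d^2 t = 1$) yields
\[
\norm{\vec{S}(\vec{h}^B)}_{X_{1/d^2}} \;\le\; C \, \norm{\varphi}_{X_1} \bigl(\norm{\varphi}_{X_0} + \norm{g}_{X_0}\bigr)^{d^2 - 1}.
\]
Expanding the power, summing over the finitely many $B$, and absorbing the contribution of the isolated $-\varphi$ term (which lies in every $X_t$) produces the stated bound $C_\varphi + C_\varphi \norm{g}_{X_0}^{d^2 - 1}$.

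The only non-bookkeeping point to check is that $\varphi \in X_1$, i.e.\ that $\int |\varphi|^{p_1} w^{p_1} < \infty$; this is immediate since $\varphi$ is bounded with compact support and the weight $w(x) = \max\bigl(1,\,|x'|^d,\,|x_d+|x'|^2|^d\bigr)$ is locally bounded, so in fact $\varphi \in X_t$ for every $t \in [0,1]$. There is no hard step here: the weighted gain is already packaged in Corollary~\ref{cor:holderforXt}, which itself rests on Lemma~\ref{lemma:weighted}. What the lemma is really doing is repackaging that gain in the form needed for the forthcoming fixed-point argument, by isolating $\varphi$ as the ``good'' part of $f$ and observing that every mixed multilinear term automatically picks up a factor of $\varphi$ in a better space than $X_0$, and hence lives in a strictly smaller space than $g$ itself.
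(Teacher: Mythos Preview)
Your argument is correct and is essentially identical to the paper's own proof: both expand $S(\varphi+g)-S(g)$ multilinearly, observe that every surviving term carries at least one factor of $\varphi\in X_1$, and invoke Corollary~\ref{cor:holderforXt} at $t=1/d^2$. A small notational quibble is that $p_1=\infty$, so the condition ``$\int|\varphi|^{p_1}w^{p_1}<\infty$'' should be read as $\varphi w\in L^\infty$; this is of course still immediate from the hypotheses on $\varphi$.
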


\begin{proof}
By assumption, $\varphi\in X_1\subset X_{1/d^2}$, so
it suffices to show that $S(\varphi+g)-S(g)$ satisfies the required bound.
Let $A=\{1,2,\cdots,d\}^2$.
$S(\varphi+g)-S(g)$ can be expanded as a sum of $d^2-1$ terms,
each of which is of the general form $\vec{S}(\vec{f})$
where $\vec{f}=(f_\alpha: \alpha\in A)$,
where each $f_\alpha$ equals either $\varphi$ or $g$,
and where for each such term, there exists at least one index $\beta\in A$
for which $f_\beta=\varphi$.
The required bound therefore follows directly from Corollary~\ref{cor:holderforXt},
again since $\varphi\in X_1$.
\end{proof}

We continue with the proof of Proposition~\ref{prop:gaindecay}.
Let $\eps>0$. Decompose $f=\varphi_\eps+g_\eps$
where $\norm{g_\eps}_{X_0}<\eps$,
and $\varphi_\eps\in L^\infty$ has bounded support.
Define
\[A_\eps(h) = \lambda Sh + \scriptl(\varphi_\eps,g_\eps).\]
This operator depends of course on $\varphi_\eps,g_\eps$,
and is defined in such a way that $A_\eps(g_\eps)=g_\eps$,
that is, $g_\eps$ is one solution of the fixed point
equation $A_\eps(h)=h$ in the space $X_0$.

\begin{lemma} \label{lemma:fixedpoint}
Let $\lambda\in\complex$, and
let $f\in L^{p_0(d)}(\reals^d)$ be any solution of $f=\lambda S(f)$.
For each $\eps>0$, let $f=\varphi_\eps+g_\eps$ be any decomposition
with $\varphi_\eps$ bounded and having bounded support,
and with $\norm{g_\eps}_{L^{p_0}}<\eps$.
Then there exists $\eps_0>0$ such that
for each $\eps\in(0,\eps_0]$,
there exists $t_\eps>0$ such that for all $t\in[0,t_\eps]$,
the fixed point equation
\[
A_\eps(h)=h
\]
has a unique solution $h\in X_{t}$
satisfying $\norm{h}_{X_{t}}\le\eps^{1/2}$.
\end{lemma}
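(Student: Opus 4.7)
The plan is to apply the Banach contraction mapping theorem to $A_\eps$ on the closed ball
$B_t = \{h \in X_t : \|h\|_{X_t} \le \eps^{1/2}\}$, after first controlling the inhomogeneous
term $\scriptl(\varphi_\eps, g_\eps)$ in $X_t$ for suitably small $t$.

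The crucial preliminary observation is that since $f = \lambda S f$ and $f = \varphi_\eps + g_\eps$,
the definition \eqref{scriptldefn} collapses to $\scriptl(\varphi_\eps, g_\eps) = g_\eps - \lambda S g_\eps$.
Consequently, using the basic bound $\|S(h)\|_{X_0} \le C\|h\|_{X_0}^{d^2}$, one has
$\|\scriptl(\varphi_\eps, g_\eps)\|_{X_0} \le \eps + |\lambda| C \eps^{d^2}$, which is $\le 2\eps$
for small $\eps$. On the other hand, Lemma~\ref{lemma:improvespace} provides a finite
(but $\varphi_\eps$-dependent, hence potentially large) bound
$\|\scriptl(\varphi_\eps, g_\eps)\|_{X_{1/d^2}} \le M_\eps$. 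The log-convexity in
Lemma~\ref{lemma:Xproperties}(iii) then interpolates these to give
$\|\scriptl(\varphi_\eps, g_\eps)\|_{X_t} \le (2\eps)^{1 - d^2 t}\, M_\eps^{d^2 t}$.
For each fixed $\eps$, the right-hand side tends to $2\eps$ as $t \to 0^+$, so one can
choose $t_\eps > 0$ small enough that $\|\scriptl(\varphi_\eps, g_\eps)\|_{X_t} \le 3\eps$
for all $t \in [0, t_\eps]$.

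For the self-mapping property, for $h \in B_t$ the basic $X_t$ bound for $S$ yields
$\|A_\eps(h)\|_{X_t} \le |\lambda| C\|h\|_{X_t}^{d^2} + 3\eps \le |\lambda| C\eps^{d^2/2} + 3\eps$;
since $d^2/2 \ge 2 > 1/2$, both terms are $o(\eps^{1/2})$ and thus $\le \eps^{1/2}$ for
$\eps$ sufficiently small. For the contraction estimate, I would expand $Sh_1 - Sh_2$ as
a telescoping sum of $d^2$ multilinear terms $\vec{S}(\vec f)$, each with exactly one factor
equal to $h_1 - h_2$ and the remaining factors equal to $h_1$ or $h_2$. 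Applying
\eqref{CSforS}, the basic $X_t$ bound $\|S(g)\|_{X_t} \le C\|g\|_{X_t}^{d^2}$, and
Lemma~\ref{lemma:holderforXt} with uniform choice $\theta_\alpha = 1/d^2$ and $t_\alpha = t$
gives the balanced bound $\|\vec{S}(\vec f)\|_{X_t} \le C\prod_\alpha \|f_\alpha\|_{X_t}$.
Summing the $d^2$ telescoping terms yields
$\|A_\eps(h_1) - A_\eps(h_2)\|_{X_t} \le |\lambda|\, d^2 C\, \eps^{(d^2-1)/2}\|h_1 - h_2\|_{X_t}$,
which is a strict contraction once $\eps$ is small enough (independently of $t$).
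Banach's theorem then delivers the unique fixed point in $B_t$.

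The main obstacle is the first step: the $X_{1/d^2}$ bound from Lemma~\ref{lemma:improvespace}
is useless by itself, because $C_{\varphi_\eps}$ must blow up as $\eps \to 0$ (to absorb
more and more of $f$ into $\varphi_\eps$). The trick is to exploit the hidden smallness of
$\scriptl(\varphi_\eps, g_\eps) = g_\eps - \lambda S g_\eps$ in the unweighted space $X_0$
and then use log-convexity to transport that smallness to $X_t$ at the cost of shrinking
$t$ in a manner that depends on $\eps$; this is exactly why $t_\eps$ in the statement
must be allowed to depend on $\eps$ rather than being fixed a priori.
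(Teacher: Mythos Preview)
Your argument is correct and follows essentially the same route as the paper: control $\scriptl(\varphi_\eps,g_\eps)$ in $X_0$ via the identity $\scriptl=g_\eps-\lambda S g_\eps$, in $X_{1/d^2}$ via Lemma~\ref{lemma:improvespace}, interpolate by log-convexity to find $t_\eps$, then run the contraction mapping on the ball of radius $\eps^{1/2}$ in $X_t$. The only cosmetic difference is that the paper interpolates to the threshold $\eps^{3/4}$ rather than your $3\eps$, and states the Lipschitz bound for $S$ directly rather than spelling out the telescoping; both choices lead to the same conclusion.
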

It bears emphasis that there are no {\it a priori}
bounds for $\eps_0$ or $t_\eps$; these depend on $f$
in some uncontrolled manner.

\begin{proof}
We know that
\[\norm{\scriptl(\varphi_\eps,g_\eps)}_{X_0} \le \eps+C\eps^{d^2},\]
and that
\[\scriptl(\varphi_\eps,g_\eps)\in X_{1/d^2}.\]
By convexity of the $X_t$ norms,
for each sufficiently small $\eps>0$
there exists $t_\eps>0$ such that
\[\norm{\scriptl(\varphi_\eps,g_\eps)}_{X_{t_\eps}} \le \eps^{3/4}.\]
Henceforth we consider only such small $\eps$.

Let $B_\eps$ be the ball of radius $\eps^{1/2}$ in $X_{t_\eps}$,
centered at $0$.
If $h\in B_\eps$ then
\begin{align*}
\norm{A_\eps(h)}_{X_{t_\eps}}
&\le |\lambda|\cdot\norm{Sh}_{X_{t_\eps}}
+ \norm{\scriptl(\varphi_\eps,g_\eps)}_{X_{t_\eps}}
\\
&\le C\norm{h}_{X_{t_\eps}}^{d^2}
+ \eps^{3/4}
\\
&\le C\eps^{d^2/2} + \eps^{3/4}
\\
&<\eps^{1/2},
\end{align*}
so $A_\eps(B_\eps)\subset B_\eps$.
For any $h,\tilde h\in B_\eps$,
\begin{align*}
\norm{A_\eps(h)-A_\eps(\tilde h)}_{X_{t_\eps}}
&=
|\lambda|\cdot \norm{Sh-S\tilde h}_{X_{t_\eps}}
\\
&\le C\norm{h-\tilde h}_{X_{t_\eps}}\cdot
\Big(
\norm{h}_{X_{t_\eps}}
+
\norm{\tilde h}_{X_{t_\eps}}
\Big)^{d^2-1}
\\
&\le C\eps^{1/2}
\norm{h-\tilde h}_{X_{t_\eps}}.
\end{align*}
Therefore $A_\eps:B_\eps\to B_\eps$ is a strict contraction,
for each sufficiently small $\eps$.
Therefore there exists a unique $h_\eps\in X_{t_\eps}$
satisfying both $\norm{h_\eps}_{X_{t_\eps}}\le\eps^{1/2}$
and $A_\eps(h_\eps)=h_\eps$.

Exactly the same reasoning applies in $X_t$ for any $0\le t\le t_\eps$.
\end{proof}

\begin{proof}[Proof of Proposition~\ref{prop:gaindecay}]
Suppose that $0\le s\le t\le t_\eps$,
and that both $h\in X_{s}$ and $\tilde h\in X_{t}$
are solutions of $A_\eps(h)=h$,
satisfying $\norm{h}_{X_{s}}\le \eps^{1/2}$
and $\norm{\tilde h}_{X_{t}}\le \eps^{1/2}$.
Then
\[
\norm{\tilde h}_{X_s}=\norm{A_\eps\tilde h}_{X_s}
\le C\norm{A_\eps\tilde h}_{X_t}\le C\eps^{3/4}<\eps^{1/2},
\]
provided that $\eps$ remains sufficiently small.
Therefore $\tilde h=h$ by the uniqueness of solutions.

In particular, since
$g_\eps$
is a solution in $X_0$,
this uniqueness of solutions implies that
\[g_\eps=h_\eps\in X_{t_\eps}\]
for all sufficiently small $\eps>0$,
as was to be proved.
\end{proof}

\section{Smoothness} \label{section:smoothness}
We have shown that any solution of the Euler-Lagrange equation
enjoys some extra decay, beyond that encoded by the finiteness of its $L^{p_0}$ norm.
We will next show how such extra decay can be used
in conjunction with the Euler-Lagrange equation to demonstrate some smoothness.
Our initial goal is to prove the following {\em a priori} inequality.

\begin{lemma} \label{lemma:apriori}
Let $\rho>0$ be sufficiently small.
Then for any $0\le\varrho<\rho$
there exists $C<\infty$ such that
for any solution $f$ of $f=\lambda Sf$,
if $f\in X_\rho$ and $\nabla f\in X_\varrho$ then
\begin{equation} \label{ineq:apriori}
\norm{\nabla f}_{X_\varrho} \le C\norm{f}_{X_\rho}^{d^2}.
\end{equation}
Here $C$ depends only on $\rho,\varrho,\lambda,d$.
\end{lemma}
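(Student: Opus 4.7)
The plan is to differentiate the Euler-Lagrange equation $f=\lambda Sf$, express $\nabla f$ as a multilinear expression in $f$ and $\nabla f$, and close the estimate in the weighted space $X_\varrho$ by combining the multilinear H\"older bound of Lemma~\ref{lemma:holderforXt} with the smoothing of $T$ (Corollary~\ref{cor:smoothing}) and the weighted fractional-derivative interpolation given by Lemmas~\ref{lemma:CZweighted} and \ref{lemma:powersofD}.

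First I would differentiate. Since $T$ and $T^*$ are convolutions with measures, they commute with $\nabla$. The chain rule applied to $Sf=(T^*G)^d$ with $G=(Tf)^d$ yields
\[
\nabla f \;=\; \lambda\,\nabla Sf \;=\; \lambda d^2\, (T^*G)^{d-1}\, T^*\!\bigl[(Tf)^{d-1}\, T(\nabla f)\bigr],
\]
which is precisely $\lambda d^2\,\vec S(\vec F)$, where $\vec F$ is the $d^2$-tuple having $\nabla f$ in one designated slot $\beta$ and $f$ in the remaining $d^2-1$ slots. Combining the pointwise bound \eqref{CSforS} with Lemma~\ref{lemma:holderforXt} (using $\theta_\alpha=1/d^2$ and parameters $t_\alpha\in[0,1]$ with $\sum_\alpha t_\alpha=d^2\varrho$), and placing $t_\beta=s$ on the slot with $\nabla f$ while distributing the remaining $d^2\varrho-s$ among the $f$-slots at parameters $\le\rho$, yields an estimate of the form
\[
\|\nabla f\|_{X_\varrho} \;\le\; C|\lambda|\, \|\nabla f\|_{X_s}\, \|f\|_{X_\rho}^{d^2-1}.
\]
Such a distribution of parameters is feasible once $\rho$ (and hence $\varrho$) is sufficiently small; when $\varrho$ is strictly smaller than $\rho(d^2-1)/d^2$ one may even take $s=0$, while for $\varrho$ closer to $\rho$ one picks a small positive $s$ to make the $t_\alpha\le\rho$ constraint hold.

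The principal obstacle is then to remove the residual factor $\|\nabla f\|_{X_s}$ without falling into a circular estimate. For this I would use the smoothing of $T$ essentially. Writing $T(\nabla f)=\nabla Tf=|D|^{1-\gamma}R\bigl(|D|^\gamma Tf\bigr)$ with $R$ a Riesz transform and $\gamma>0$ the smoothing index furnished by Corollary~\ref{cor:smoothing}, the offending derivative factor in the multilinear expression is recast as a fractional derivative of order $1-\gamma<1$ acting on a function already controlled by $\|f\|_{X_\rho}$. Lemma~\ref{lemma:powersofD}, combined with the weighted Calder\'on-Zygmund estimates of Lemma~\ref{lemma:CZweighted}, then interpolates this fractional derivative between $\nabla f$ and $f$, producing an auxiliary inequality of the schematic form
\[
\|\nabla f\|_{X_s} \;\le\; \varepsilon\, \|\nabla f\|_{X_\varrho} + C_\varepsilon\, \|f\|_{X_\rho}^{d^2}.
\]
Because the a priori hypothesis $\nabla f\in X_\varrho$ guarantees the left-hand side of \eqref{ineq:apriori} is finite, the $\varepsilon\|\nabla f\|_{X_\varrho}$ term may be absorbed once $\varepsilon$ is taken small. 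The hardest single step is this absorption: the fractional-interpolation loss must be strictly compensated by the smoothing gain $\gamma$ provided by Corollary~\ref{cor:smoothing}, which forces $\rho$ to be sufficiently small so that all the weighted $A_p$-type inequalities of \S\ref{section:prelim} apply with uniformly bounded constants.
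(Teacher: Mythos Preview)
Your outline assembles the right ingredients and follows the same overall strategy as the paper: differentiate the equation, apply the multilinear H\"older bound with one slot carrying $\nabla f$, exploit the smoothing of $T$ from Corollary~\ref{cor:smoothing}, and close via the fractional interpolation of Lemmas~\ref{lemma:CZweighted} and \ref{lemma:powersofD}. However, there is one genuine gap and one muddled step.

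The gap is localization. Corollary~\ref{cor:smoothing} is a \emph{local} estimate: it only controls $\||D|^\gamma Tf\|_{L^{q_0}(B_R)}$ for a fixed ball $B_R$, not globally. Your sentence ``a function already controlled by $\|f\|_{X_\rho}$'' is therefore not justified as written, and without it the argument does not close. The paper handles this via an intermediate reduction (its Lemma~\ref{lemma:apriori1}): from the multilinear bound one first obtains $\|\nabla f\|_{X_\varrho}\le C\|\nabla Tf\|_{Y_{*,\varrho'}}$ for some $\varrho'<\varrho$, interpolates this against $\|\nabla Tf\|_{L^{q_0}}$, absorbs the $Y_{*,\varrho}$ piece back into the left-hand side, and then uses the weight decay $w_*\gtrsim |x|^{d/2}$ to bound the contribution of $|x|\ge R$ by $CR^{-\tau}\|\nabla f\|_{X_\varrho}$. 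Choosing $R$ large absorbs this too, leaving $\|\nabla f\|_{X_\varrho}\le C\|T\nabla f\|_{L^{q_0}(B_R)}$. Only then can the smoothing be invoked. Your proposal skips this entire localization/absorption mechanism.

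The muddled step is the form of your ``auxiliary inequality.'' What the cited lemmas actually give (after localization) is multiplicative, not additive: writing $T\nabla f=|D|^\gamma T(\nabla|D|^{-\gamma}f)$ and applying Corollary~\ref{cor:smoothing}, then Lemmas~\ref{lemma:CZweighted} and \ref{lemma:powersofD}, yields
\[
\|T\nabla f\|_{L^{q_0}(B_R)}\;\lesssim\;\|\nabla|D|^{-\gamma}f\|_{X_\varrho}\;\lesssim\;\|\nabla f\|_{X_\varrho}^{1-\gamma}\,\|f\|_{X_\varrho}^{\gamma}.
\]
Combined with the localized bound, and after normalizing $\|f\|_{X_\rho}=1$, this gives $\|\nabla f\|_{X_\varrho}\lesssim\|\nabla f\|_{X_\varrho}^{1-\gamma}$, hence $\|\nabla f\|_{X_\varrho}\lesssim 1$. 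There is no $\varepsilon$-splitting and no stray $\|f\|_{X_\rho}^{d^2}$ at this stage; the $d^2$ power in the final statement comes only from undoing the normalization.
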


Here $\nabla f
=\big(\frac{\partial f}{\partial x_1},\cdots,\frac{\partial f}{\partial x_d}\big)$.
It suffices to prove this under the assumption that $\norm{f}_{X_\rho}=1$,
which will be assumed for the remainder of \S\ref{section:smoothness}.
Indeed, for general $f$,
consider the function $F = f/\norm{f}_{X_\rho}$.
It satisfies the modified equation
$F = \tilde\lambda SF$
where $\tilde\lambda = \lambda \norm{f}_{X_\rho}^\sigma$
for a certain exponent $\sigma$.
Thus we only have to replace $\lambda$ by $\tilde\lambda$
in order to assume $\norm{f}_{X_\rho}=1$.

\begin{lemma} \label{lemma:apriori1}
Let $\rho>0$ be sufficiently small and $\lambda\in\complex$.
Let $0<\varrho<\rho$.
There exists $R<\infty$
such that for any function $f$
satisfying $f=\lambda Sf$ and
$\norm{f}_{X_\rho}=1$, with $\nabla f\in X_\varrho$,
\[
\norm{\nabla f}_{X_\varrho}
\le C
\norm{T\nabla f}_{L^{q_0}(B_R)}
\]
where $C,R,a$ depend only on $d,\rho,\varrho,\lambda,\norm{f}_{X_\rho}$.
\end{lemma}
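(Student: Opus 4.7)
The plan is to differentiate the Euler--Lagrange equation to obtain a multilinear identity for $\nabla f$ in which $T\nabla f$ appears as its only explicit $\nabla f$, split that appearance into a local piece on $B_R$ and a far-field remainder, bound the local piece by $\norm{T\nabla f}_{L^{q_0}(B_R)}$, and absorb the far-field remainder into the left-hand side using the gap $\varrho<\rho$.

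Differentiating $f=\lambda Sf$ componentwise, and using that $T^*$ commutes with $\nabla$, one obtains
\[
\nabla f \;=\; d^2\lambda\,\bigl(T^*[(Tf)^d]\bigr)^{d-1}\,T^*\!\bigl[(Tf)^{d-1}\,T\nabla f\bigr],
\]
which in the notation of \S\ref{section:prelim} reads $\nabla f = d^2\lambda\,\vec S(\vec h)$, where $\vec h$ has exactly one entry equal to $\nabla f$ and the remaining $d^2-1$ entries equal to $f$. Fix $R>0$, set $g_R := \chi_{B_R}\cdot T\nabla f$, $g_R^\sharp := T\nabla f - g_R$, and decompose $\nabla f = N_R + N_R^\sharp$ by substituting $g_R$ or $g_R^\sharp$ for $T\nabla f$ in the displayed identity.

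For $N_R$: since $g_R$ is compactly supported in $B_R$, where both $w$ and $w_*$ are bounded, a combination of the pointwise H\"older bound inside the $T^*$-average, the Euler--Lagrange identity $T^*[(Tf)^d] = (f/\lambda)^{1/d}$, and Lemma~\ref{lemma:weighted} yields $\norm{N_R}_{X_\varrho} \le C_R \norm{T\nabla f}_{L^{q_0}(B_R)}$, with $C_R$ depending only on $R, d, \rho, \varrho, \lambda$. For $N_R^\sharp$: pointwise H\"older (exponents $d/(d-1), d$) inside the $T^*$-average, the same identity $T^*[(Tf)^d]=(f/\lambda)^{1/d}$, a weighted H\"older (exponents $d^2/(d^2-1), d^2$), and Lemma~\ref{lemma:weighted} applied to $T^*$ combine to give
\[
\norm{N_R^\sharp}_{X_\varrho} \;\le\; C\,\norm{f}_{X_\varrho}^{(d^2-1)/d^2}\,\norm{g_R^\sharp}_{Y_{*,\varrho}}.
\]
Since $\norm{f}_{X_\rho}=1$ bounds the prefactor, what remains is to control $\norm{g_R^\sharp}_{Y_{*,\varrho}}$ by $\eta(R)\norm{\nabla f}_{X_\varrho}$ with $\eta(R)\to 0$ quantitatively as $R\to\infty$, uniformly in $f$.

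This uniform rate is the crux of the argument and is the main obstacle. It is obtained by a bootstrap: applying Corollary~\ref{cor:holderforXt} to the same identity $\nabla f = d^2\lambda\,\vec S(\vec h)$ with the distinguished index chosen in an $f$-slot upgrades $\nabla f\in X_\varrho$ to $\nabla f\in X_{\varrho'}$ for some $\varrho'\in(\varrho,\rho]$, with $\norm{\nabla f}_{X_{\varrho'}}\le C\norm{\nabla f}_{X_\varrho}$ (using $\norm{f}_{X_\rho}=1$); then $T\nabla f\in Y_{*,\varrho'}$, and a weighted comparison on $B_R^c$ converts the surplus $\varrho'-\varrho$ into a quantitative decay $\eta(R)\le CR^{-\sigma}$ for some $\sigma>0$. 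Choosing $R$ so that $C^2 R^{-\sigma}\le 1/2$ and absorbing into $\norm{\nabla f}_{X_\varrho}\le\norm{N_R}_{X_\varrho}+\norm{N_R^\sharp}_{X_\varrho}$ yields the claim. The delicate part of the bookkeeping is that the natural weighted H\"older comparisons between $Y_{*,\varrho}$ and $Y_{*,\varrho'}$ on $B_R^c$ lead to integrals of $\upsilon$ whose global integrability is subtle in higher dimensions, so the spaces and exponents must be arranged so that the surplus $\varrho'-\varrho$ purchases the $R^{-\sigma}$ gain rather than producing a divergent tail contribution.
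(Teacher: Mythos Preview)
Your proposal follows essentially the same route as the paper: differentiate the Euler--Lagrange equation to isolate a single $T\nabla f$ factor, exploit the gap $\rho>\varrho$ through the multilinear H\"older machinery of \S\ref{section:prelim}, split at radius $R$, and absorb the tail. The organization differs. The paper first uses the $d^2-1$ factors $f\in X_\rho$ to land $\nabla f$ in $X_\varrho$ while only needing $T\nabla f\in Y_{*,\varrho'}$ with $\varrho'<\varrho$, then interpolates $Y_{*,\varrho'}$ between $Y_{*,\varrho}$ and $Y_{*,0}=L^{q_0}$ and absorbs the resulting $\theta$-power to obtain the clean unweighted bound $\norm{\nabla f}_{X_\varrho}\le C\norm{T\nabla f}_{L^{q_0}}$; only afterwards does it split into $B_R$ and $B_R^c$, and the tail is controlled directly from $\nabla f\in X_\varrho$ with no bootstrap. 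You instead split first and then bootstrap $\nabla f$ from $X_\varrho$ into a strictly higher $X_{\varrho'}$ to manufacture the tail decay. Both work; the paper's order is a bit more economical because the reduction to the unweighted $L^{q_0}$ norm obviates the separate weighted treatment of your $N_R$ term, which you state but do not fully justify.

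Two small points. First, Corollary~\ref{cor:holderforXt} as written puts all non-distinguished entries in $X_0$, so invoking it ``with the distinguished index in an $f$-slot'' only yields $\nabla f\in X_{\rho/d^2}$, which need not exceed $\varrho$; you really want the general Lemma~\ref{lemma:holderforXt} with the $f$-entries in $X_\rho$ and $\nabla f$ in $X_\varrho$, giving $\varrho'=\tfrac{(d^2-1)\rho+\varrho}{d^2}>\varrho$. Second, your concern about divergent $\upsilon$-integrals in the tail comparison is unwarranted: the H\"older bookkeeping between any two $Y_{*,\alpha}$ and $Y_{*,\beta}$ always produces $\int_{|x|\ge R}\upsilon^{q_0/d}=\int_{|x|\ge R}\upsilon^{p_0}$ with $p_0=(d+1)/d>1$, which is finite and decays polynomially in $R$.
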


\begin{proof}[Proof of Lemma~\ref{lemma:apriori1}]
Write
\begin{align}
\nabla f = \lambda\nabla(Sf)
&=d^2\lambda\vec{S}(f,f,\cdots,f,\nabla f)
\notag
\\
&=d^2\lambda \big(T^*([Tf]^{d})\big)^{d-1}
\cdot T^*([Tf]^{d-1}\cdot\nabla Tf).
\label{nablafrep}
\end{align}
Here $\vec{S}(f,f,\cdots,f,\nabla f)$ stands for the vector with $d$
components, whose $j$-th component equals
$\vec{S}(f,f,\cdots,f,\partial f/\partial x_j)$.

Therefore
\begin{equation}
\norm{\nabla f}_{X_\varrho}
\le C\norm{f}_{X_\rho}^{d^2-1}\norm{\nabla Tf}_{Y_{*,\varrho'}}
= C\norm{\nabla Tf}_{Y_{*,\varrho'}}
\end{equation}
for a certain $\varrho'<\varrho$; $\varrho'$ does not depend on $f$.
Since
\begin{align*}
\norm{\nabla Tf}_{Y_{*,\varrho'}}
&\le\norm{\nabla Tf}_{Y_{*,\varrho}}^\theta
\norm{\nabla Tf}_{Y_{*,0}}^{1-\theta}
\\
&=\norm{\nabla Tf}_{Y_{*,\varrho}}^\theta
\norm{\nabla Tf}_{L^{q_0}}^{1-\theta}
\\
&\lesssim\norm{\nabla f}_{X_{\varrho}}^\theta
\norm{\nabla Tf}_{L^{q_0}}^{1-\theta}
\end{align*}
for some $\theta\in (0,1)$,
we deduce that
\begin{equation} \label{readyforabsorption}
\norm{\nabla f}_{X_\varrho}
\le C\norm{\nabla Tf}_{L^{q_0}}
\end{equation}
where $C$ depends only on $\rho,\varrho, \lambda$.
Now for any $R<\infty$,
\begin{align*}
\int_{|x|\ge R}|\nabla Tf(x)|^{q_0}\,dx
&\le R^{-q_0\tau\varrho} \norm{\nabla Tf}_{Y_{*,\varrho}}^{q_0}
\\
&= R^{-q_0\tau\varrho} \norm{T\nabla f}_{Y_{*,\varrho}}^{q_0}
\\
&\le C R^{-q_0\tau\varrho} \norm{\nabla f}_{X_{\varrho}}^{q_0}
\end{align*}
for a certain exponent $\tau>0$.
Therefore
\begin{align*}
\norm{\nabla f}_{X_\varrho}
&\le
C \norm{\nabla Tf}_{L^{q_0}(B_R)}
+ C R^{-\tau\varrho}
\norm{\nabla f}_{X_\varrho}.
\end{align*}
Define $R$ by the equation
$C R^{-\tau\varrho} =\tfrac12$
to obtain
\begin{equation} \label{notquiteneeded}
\norm{\nabla f}_{X_\varrho}
\le
2C
\norm{\nabla f}_{L^{p_0}(B_R)}
\end{equation}
where
$C,R$ depend only on $\rho,\varrho,\lambda,d$.
$R$ will henceforth remain fixed.
This same reasoning can be carried out for all dimensions $d$ with very minor changes.
\end{proof}

\begin{proof}[Proof of Lemma~\ref{lemma:apriori}]
We will use the representation \eqref{nablafrep} in order to
obtain a bound for $\norm{T\nabla f}_{L^{q_0}(B_R)}$
in terms of $\norm{f}_{X_\rho}$, where $R$ is as defined above.


Let $\gamma\in(0,1)$ be a small constant, to be chosen below.
Writing \[T\nabla f=|D|^\gamma T(\nabla |D|^{-\gamma}f),\]
Corollary~\ref{cor:smoothing} gives
\[
\norm{T\nabla f}_{L^{q_0}(B_R)}
=\norm{|D|^\gamma T\big(\nabla |D|^{-\gamma}f\big)}_{L^{q_0}(B_R)}
\lesssim
\norm{\nabla |D|^{-\gamma}f}_{X_\varrho}
\]
provided that $\gamma$ is a sufficiently small function of $\varrho,d$ alone.


Therefore by Lemma~\ref{lemma:CZweighted},
Lemma~\ref{lemma:powersofD},
and a second application of Lemma~\ref{lemma:CZweighted},
\begin{align*}
\norm{\nabla |D|^{-\gamma}f}_{X_\varrho}
\le C
\norm{|D|^{1-\gamma}f}_{X_\varrho}
\le C
\norm{\nabla f}_{X_\varrho}^{1-\gamma}
\norm{f}_{X_\varrho}^\gamma
\end{align*}
for some $\gamma\in(0,1)$.

Thus
\[
\norm{T\nabla f}_{L^{q_0}(B_R)}
\lesssim \norm{\nabla f}_{X_\varrho}^{1-\gamma}\norm{f}_{X_\rho}^\gamma
= \norm{\nabla f}_{X_\varrho}^{1-\gamma}
\]
and therefore by Lemma~\ref{lemma:apriori1},
\[
\norm{\nabla f}_{X_\varrho}
\lesssim
\norm{\nabla f}_{X_\varrho}^{1-\gamma}.
\]
Recall that
$\norm{\nabla f}_{X_\varrho}^{1-\gamma}$
is assumed to be finite. It follows from this last inequality that
$\norm{\nabla f}_{X_\varrho} \lesssim 1$.
This completes the proof of Lemma~\ref{lemma:apriori}.
\end{proof}

\section{Mollified Derivatives and Conclusion of Proof}
Lemma~\ref{lemma:apriori}
presupposes that $\nabla f\in X_\rho$, which we seek to prove.
In order to remove the extraneous assumption,
we approximate $\nabla$ by a one-parameter family of
operators which are individually bounded on the spaces $X_t$.

For any $s\ge 0$ and $\Lambda\ge 1$ define
\begin{equation}
\widehat{D^s_\Lambda f}(\xi) =
\min\big(1+|\xi|^2)^{1/2},(1+\Lambda^2)^{1/2}\big)^s \cdot \widehat{f}(\xi).
\end{equation}
These operators are bounded on all $L^p$ spaces,
and likewise on all spaces $X_t$ for $t\in[0,1]$.
For $s=1$ we write simply $D_\Lambda$.

In order to prove that $\nabla f\in X_\varrho$,
it suffices to show that $\norm{D_\Lambda f}_{X_\varrho}\le A$
for some finite constant $A$ which is independent of $\Lambda$.
The proof of Lemma~\ref{lemma:apriori} relied on Leibniz' rule for derivatives
of products. There is no corresponding formula for $D_\Lambda(fg)$, but
the following lemma provides an adequate substitute.

\begin{lemma} \label{lemma:leibniz}
Let $u\ge 0$ be a locally integrable function.
Let $s\in(0,\infty)$.
Suppose that $r^{-1} = p_j^{-1}+q_j^{-1}$
for $j=1,2$ and that all exponents $r,p_j,q_j$
belong to the open interval $(1,\infty)$.
Suppose that the weight $u$ belongs to $A_r$
and that $u=u_1v_1=u_2v_2$
where $u_j^{p_j/r}\in A_{p_j}$
and
$v_j^{q_j/r}\in A_{q_j}$.
Then
there exists $C<\infty$ such that
$D^s_\Lambda(fg)\in L^r$, and the following inequality holds,
whenever the right-hand side is finite:
\begin{equation}
\norm{D^s_\Lambda (fg)}_{L^r(u)}
\le C
\norm{D^s_\Lambda f}_{L^{p_1}(u_1^{p_1/r})} \norm{g}_{L^{q_1}(v_1^{q_1/r})}
+ C
\norm{f}_{L^{p_2}(u_2^{p_2/r})}
\norm{D^s_\Lambda g}_{L^{q_2}(v_2^{q_2/r})}.
\end{equation}
\end{lemma}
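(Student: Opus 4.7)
The plan is to prove this mollified weighted Leibniz inequality by a Bony paraproduct decomposition, exploiting the fact that the Fourier multiplier $D^s_\Lambda$ respects Littlewood--Paley frequency localization and is uniformly controlled in $\Lambda$ by the weighted Calder\'on--Zygmund theory already invoked in the paper (Lemma~\ref{lemma:Ap} and its surrounding remarks).

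First I would decompose $fg = \Pi(f,g) + \Pi(g,f) + R(f,g)$, where $\Pi(f,g)=\sum_k (P_k f)(S_{k-3}g)$ is the standard low-high paraproduct built from Littlewood--Paley projections $P_k$ onto dyadic shells $\{|\xi|\sim 2^k\}$, $S_{k-3}$ is the corresponding low-pass projection, and $R$ collects the diagonal pieces $\sum_{|k-\ell|\le 3}(P_k f)(P_\ell g)$. The Fourier support of each summand of $\Pi(f,g)$ is essentially contained in $\{|\xi|\sim 2^k\}$, and on that shell the symbol of $D^s_\Lambda$ equals the constant $\min(2^{ks},\abr{\Lambda}^s)$, which also equals the symbol of $D^s_\Lambda$ on the support of $P_k f$. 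Hence, up to an error whose symbol satisfies H\"ormander--Mikhlin estimates uniformly in $\Lambda$, one has the identity
\[
D^s_\Lambda \Pi(f,g) = \Pi(D^s_\Lambda f,\, g) + \mathcal{E}_\Lambda(f,g).
\]

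The paraproduct operator $(F,G)\mapsto \Pi(F,G)$ is a vector-valued $A_p$ Calder\'on--Zygmund form, so combined with a weighted H\"older inequality based on the factorization $u = u_1 v_1$ it satisfies
\[
\|\Pi(F,G)\|_{L^r(u)} \le C\,\|F\|_{L^{p_1}(u_1^{p_1/r})}\|G\|_{L^{q_1}(v_1^{q_1/r})};
\]
applying this with $F=D^s_\Lambda f$ and $G=g$ yields the first majorant in the conclusion, while the symmetric majorant comes from treating $\Pi(g,f)$ with the factorization $u=u_2 v_2$. The diagonal remainder $R(f,g)$ and the error term $\mathcal{E}_\Lambda(f,g)$ are absorbed into these majorants by the same argument, since each is built from similar paraproduct-type pieces.

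The main obstacle is ensuring uniformity in $\Lambda$: the error multiplier $m_\Lambda(\xi)=\min(1,\abr{\Lambda}^s/\abr{\xi}^s)$ and its Littlewood--Paley-localized relatives must satisfy H\"ormander/Mikhlin symbol estimates with constants independent of $\Lambda\ge 1$. A short direct computation provides this (after a harmless smoothing of the kink at $|\xi|=\Lambda$), so that Lemma~\ref{lemma:Ap} and the weighted multiplier theorem deliver uniform bounds on $L^\sigma(w)$ for every $w\in A_\sigma$, which controls every error term produced by the paraproduct decomposition.
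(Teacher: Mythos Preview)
Your proposal is correct and follows essentially the same route as the paper: a paraproduct (Littlewood--Paley) decomposition of $fg$, using frequency localization to transfer $D^s_\Lambda$ onto the high-frequency factor, and then weighted square-function, maximal-function, and vector-valued Calder\'on--Zygmund estimates together with the factorizations $u=u_jv_j$. The paper writes out the square-function manipulations explicitly (following \cite{christweinstein}) rather than invoking paraproduct boundedness as a black box, and handles the diagonal piece via the pointwise bound $|M_k h|\le C\scriptm h$ for the low-pass multipliers $2^{-ks}\lambda_{s,\Lambda}(\xi)\eta(2^{-k-2}\xi)$, but the content is the same as what you outline.
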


A proof will be given in \S\ref{section:leibniz}.

\begin{corollary} \label{cor:leibniz}
Let $s\in(0,\infty)$.
Let $\rho>0$ be sufficiently small, and let $0<\varrho<\rho$.
There exist $\varrho'\in(0,\varrho)$
and $C<\infty$ such that
for all $\Lambda\ge 1$
and  all vector-valued functions $\vec{f}\in X_\rho$,
\[
\norm{D^s_\Lambda\vec{S}(\vec{f})}_{X_\varrho}
\le C\sum_j\prod_{i\ne j}
\norm{f_i}_{X_\rho}
\cdot \norm{D^s_\Lambda Tf_j}_{Y_{*,\varrho'}}.
\]
The constant $C$ may be taken to be
independent of $\Lambda$ while $s,\rho$ remain fixed.
\end{corollary}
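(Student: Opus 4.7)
The proof strategy is a two-stage application of Lemma~\ref{lemma:leibniz}: first to distribute $D^s_\Lambda$ across the $d$-fold outer product that defines $\vec{S}$, and then across the $d$-fold inner product sitting inside each $T^*$. This is combined with the mapping properties of $T$ and $T^*$ in the $X$, $Y$ scales, the $A_p$ information of Lemma~\ref{lemma:Ap}, and the fact that $D^s_\Lambda$ is a Fourier multiplier and therefore commutes exactly with the convolution operator $T^*$.

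Write $\vec{S}(\vec{f}) = \prod_{i=1}^d T^*(G_i)$ where $G_i = \prod_{j=1}^d T f_{i,j}$. First I would apply the $d$-fold iterate of Lemma~\ref{lemma:leibniz} to this outer product, choosing a weight decomposition $w^{\varrho p_\varrho} = u_1 \prod_{i\ne i_0} v_i$ and exponents arranged so that the derivative factor $D^s_\Lambda T^*(G_{i_0})$ is measured in the weighted $L^{q_{\varrho_1}}$ space with weight $w^{\varrho_1 p_{\varrho_1}}$ for some $\varrho_1 \in (0,\varrho)$ (concretely $\varrho_1 = d\varrho - (d-1)\rho$), while each non-derivative factor $T^*(G_i)$ is measured in the natural weighted $L^{q_\rho}$ space $Y_\rho$. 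By Lemma~\ref{lemma:weighted} applied to $T$ and $\scriptt_*$, one has $\norm{T^*(G_i)}_{Y_\rho} \lesssim \prod_j \norm{f_{i,j}}_{X_\rho}$; the $A_p$ requirements of Lemma~\ref{lemma:leibniz} are met for $\rho$ sufficiently small thanks to Lemma~\ref{lemma:Ap}.

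Next, commute the derivative through $T^*$ using $D^s_\Lambda T^* = T^* D^s_\Lambda$ and invoke the mapping property $T^*\colon X_{*,\varrho_1} \to Y_{\varrho_1}$ (another consequence of Lemma~\ref{lemma:weighted}) to reduce the control of the derivative factor to a bound on $\norm{D^s_\Lambda G_{i_0}}_{X_{*,\varrho_1}}$. I would then apply Lemma~\ref{lemma:leibniz} a second time, now to the inner product $G_{i_0} = \prod_j T f_{i_0,j}$, distributing the derivative onto a single inner factor $T f_{i_0,j_0}$ and measuring the remaining $d-1$ factors in $Y_{*,\rho}$ with norms controlled by $\norm{T f_{i_0,j}}_{Y_{*,\rho}} \lesssim \norm{f_{i_0,j}}_{X_\rho}$. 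This second Leibniz step introduces a further loss, producing $\norm{D^s_\Lambda T f_{i_0,j_0}}_{Y_{*,\varrho'}}$ for some $\varrho' \in (0,\varrho_1) \subset (0,\varrho)$. Collecting the resulting $d^2$ terms and relabeling over $A = \{1,\ldots,d\}^2$ gives the stated inequality. The $\Lambda$-independence of $C$ is automatic because the constants in Lemma~\ref{lemma:leibniz} do not depend on $\Lambda$ and the commutation $T^* D^s_\Lambda = D^s_\Lambda T^*$ is exact.

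The principal obstacle is bookkeeping. At each Leibniz step one must simultaneously (i) match the Hölder scaling $r^{-1} = p_1^{-1} + q_1^{-1}$, (ii) exhibit a factorization $w^{\varrho p_\varrho} = u_1 v_1$ with each piece a power of $w$ compatible with the correct weighted $L^p$ space on each side, and (iii) verify that $u_1^{p_1/r}\in A_{p_1}$ and $v_1^{q_1/r}\in A_{q_1}$. The admissible range in Lemma~\ref{lemma:Ap} is what forces the loss chain $\varrho' < \varrho_1 < \varrho < \rho$ and underlies the hypothesis that $\rho$ be sufficiently small; once these exponent and weight choices are nailed down, the remainder is a routine concatenation of Lemmas~\ref{lemma:weighted},~\ref{lemma:Ap}, and~\ref{lemma:leibniz}.
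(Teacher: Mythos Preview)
Your two-stage Leibniz strategy---distribute $D^s_\Lambda$ over the outer $d$-fold product, commute it through $T^*$ via $D^s_\Lambda T^*=T^*D^s_\Lambda$, then distribute again over the inner product---is exactly what the paper intends; the corollary is stated without proof immediately after Lemma~\ref{lemma:leibniz}, and your outline matches the explicit $\nabla$-computation carried out in Lemma~\ref{lemma:apriori1}.

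There is one slip in the parameter bookkeeping. If the $d-1$ non-derivative outer factors are placed at parameter $\rho$, the H\"older constraint $(d-1)\rho+\varrho_1=d\varrho$ forces $\varrho_1=d\varrho-(d-1)\rho$. This is indeed $<\varrho$, but it is \emph{negative} whenever $\varrho<(1-\tfrac1d)\rho$, which the hypothesis $0<\varrho<\rho$ does not rule out. The same difficulty recurs at the inner stage, and a negative intermediate parameter lies outside the range where Lemma~\ref{lemma:weighted} and the $A_p$ verification of Lemma~\ref{lemma:Ap} are available. The remedy is simple: instead of anchoring the non-derivative factors at $\rho$, place them at some intermediate $\sigma$ with $\varrho<\sigma<\min\bigl(\rho,\tfrac{d}{d-1}\varrho\bigr)$. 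The inclusion $X_\rho\subset X_\sigma$ still delivers bounds in terms of $\norm{f_i}_{X_\rho}$, while the derivative factor now lands at $d\varrho-(d-1)\sigma\in(0,\varrho)$. Repeating this adjustment at the inner stage yields a bona fide $\varrho'\in(0,\varrho)$, and with that correction your argument goes through as written.
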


\medskip
Together, the proof of Lemma~\ref{lemma:apriori} and Corollary~\ref{cor:leibniz}
establish:
\begin{lemma}
Let $s\in(0,\infty)$ and $\lambda\in\complex$.
Let $\rho>0$ be sufficiently small, and let $0<\varrho<\rho$.
Let $f\in X_\rho$ be any solution
of the Euler-Lagrange equation \eqref{eq:EL}.
There exists $C<\infty$
such that for all $\Lambda<\infty$,
\[
\norm{D^s_\Lambda f}_{X_\varrho}\le C.
\]
\end{lemma}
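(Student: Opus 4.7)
The plan is to run the proof of Lemma~\ref{lemma:apriori} line by line, with the gradient $\nabla$ replaced throughout by the mollified derivative $D^s_\Lambda$ and with Corollary~\ref{cor:leibniz} used in place of the ordinary product rule. A crucial preliminary observation is that for each fixed $\Lambda$ the Fourier symbol of $D^s_\Lambda$ is bounded by $\langle\Lambda\rangle^s$, so $D^s_\Lambda$ is a priori bounded on $X_\varrho$ (with a $\Lambda$-dependent constant); hence every norm that appears below is automatically finite and the absorption steps are legitimate. The goal is to produce a constant that is independent of $\Lambda$.

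First, normalize $\|f\|_{X_\rho}=1$ by rescaling $\lambda$ as in the proof of Lemma~\ref{lemma:apriori}. Apply $D^s_\Lambda$ to the Euler--Lagrange equation $f=\lambda\vec S(f,\ldots,f)$; Corollary~\ref{cor:leibniz} then yields
\[
\|D^s_\Lambda f\|_{X_\varrho}
\le C|\lambda|\, d^2\, \|f\|_{X_\rho}^{d^2-1}\,\|D^s_\Lambda Tf\|_{Y_{*,\varrho'}}
= C'\|D^s_\Lambda Tf\|_{Y_{*,\varrho'}}
\]
for some $\varrho'<\varrho$, with $C'$ independent of $\Lambda$. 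Since $T$ is a convolution operator it commutes with every Fourier multiplier, so $D^s_\Lambda Tf=TD^s_\Lambda f$. Following Lemma~\ref{lemma:apriori1}, interpolate the right-hand side between $Y_{*,\varrho}$ and $Y_{*,0}=L^{q_0}$, use $\|TD^s_\Lambda f\|_{Y_{*,\varrho}}\le C\|D^s_\Lambda f\|_{X_\varrho}$, absorb, and perform the large-ball cutoff to obtain
\[
\|D^s_\Lambda f\|_{X_\varrho}\le C\|TD^s_\Lambda f\|_{L^{q_0}(B_R)}
\]
for a finite $R=R(\rho,\varrho,\lambda,d)$.

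Now apply the smoothing gain. Writing $TD^s_\Lambda f=|D|^\gamma T\bigl(|D|^{-\gamma}D^s_\Lambda f\bigr)$ and invoking Corollary~\ref{cor:smoothing} bounds the right-hand side by $C\|\,|D|^{-\gamma}D^s_\Lambda f\|_{X_\varrho}$. The analog of Lemma~\ref{lemma:powersofD} for this operator, obtained by interpolating the analytic family $z\mapsto |D|^{-z\gamma}D^s_\Lambda$ between its endpoint at $\Re z=0$ and the endpoint at $\Re z=1$ (the latter handled via Lemma~\ref{lemma:CZweighted}), gives
\[
\|\,|D|^{-\gamma}D^s_\Lambda f\|_{X_\varrho}
\le C\|D^s_\Lambda f\|_{X_\varrho}^{1-\gamma}\|f\|_{X_\varrho}^\gamma
\le C\|D^s_\Lambda f\|_{X_\varrho}^{1-\gamma}.
\]
Concatenating these estimates produces the self-improving bound $\|D^s_\Lambda f\|_{X_\varrho}\le C\|D^s_\Lambda f\|_{X_\varrho}^{1-\gamma}$, which together with the a priori finiteness of the left-hand side and $\gamma>0$ forces $\|D^s_\Lambda f\|_{X_\varrho}\le C^{1/\gamma}$ with $C$ independent of $\Lambda$, as required.

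The main technical obstacle is precisely the last ingredient, namely the analog of Lemma~\ref{lemma:powersofD} for $|D|^{-\gamma}D^s_\Lambda$: the Riesz potential $|D|^{-\gamma}$ has a low-frequency singularity, so the composition is not a bona fide Mihlin multiplier. I expect this to be handled by a Littlewood--Paley decomposition of $f$ into low- and high-frequency parts: on the low-frequency part the operator $D^s_\Lambda$ has symbol bounded uniformly in $\Lambda$ (since $\min(\langle\xi\rangle,\langle\Lambda\rangle)\le\langle\xi\rangle$ is $O(1)$ there), so the resulting contribution is trivially bounded independently of $\Lambda$, while on the high-frequency part $|D|^{-\gamma}D^s_\Lambda$ has a smooth symbol to which the analytic-family interpolation applies uniformly in $\Lambda$.
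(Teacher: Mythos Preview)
Your approach is exactly what the paper intends: it states this lemma as an immediate consequence of rerunning the proof of Lemma~\ref{lemma:apriori} with Corollary~\ref{cor:leibniz} substituting for the Leibniz rule, and you have faithfully carried that out, including the normalization, the absorption-and-localization step of Lemma~\ref{lemma:apriori1}, and the smoothing via Corollary~\ref{cor:smoothing}. You also correctly flag a low-frequency technicality the paper leaves implicit---unlike $\nabla|D|^{-\gamma}$, whose symbol is $O(|\xi|^{1-\gamma})$ near the origin, the operator $|D|^{-\gamma}D^s_\Lambda$ has a genuine singularity there---and your proposed low/high-frequency split is the right fix.

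One small correction: the analytic family you write, $z\mapsto |D|^{-z\gamma}D^s_\Lambda$, places the target operator at the \emph{endpoint} $z=1$, so it cannot by itself yield the stated bound $\|D^s_\Lambda f\|_{X_\varrho}^{1-\gamma}\|f\|_{X_\varrho}^\gamma$. On the high-frequency piece one should instead run the family out to $z=s/\gamma$ (where $|D|^{-s}D^s_\Lambda P_{\mathrm{high}}$ has bounded symbol, giving the endpoint bound $C\|f\|_{X_\varrho}$), or equivalently interpolate $z\mapsto D^{sz}_\Lambda P_{\mathrm{high}}$ between $z=0$ and $z=1$ and observe that $|D|^{-\gamma}D^s_\Lambda P_{\mathrm{high}}$ differs from $D^{s-\gamma}_\Lambda P_{\mathrm{high}}$ by a multiplier bounded on $X_\varrho$; either way one obtains a bound of the form $C\|D^s_\Lambda f\|_{X_\varrho}^{1-\theta}\|f\|_{X_\varrho}^{\theta}$ with $\theta=\gamma/s>0$ (taking $\gamma<s$, which one may arrange), and the self-improving conclusion follows as you wrote.
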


Because this bound is uniform in $\Lambda$, combining this lemma
with Proposition~\ref{prop:gaindecay} yields:
\begin{corollary}
Let $\lambda\in\complex$.
Let $f\in X_\rho$
be any solution of the Euler-Lagrange equation \eqref{eq:EL}.
Then there exists $t>0$ such that for all $s\ge 0$,
$|D|^s f\in X_t$.
\end{corollary}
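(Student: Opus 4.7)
The plan is to combine the preceding Lemma, which supplies uniform-in-$\Lambda$ bounds $\norm{D^s_\Lambda f}_{X_\varrho}\le C_s$, with a weak compactness argument in the reflexive space $X_\varrho$ in order to pass to the limit $\Lambda\to\infty$, and then to deduce the conclusion for $|D|^s f$ from the corresponding bound on $\abr{D}^s f$ via a Fourier multiplier argument.

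By Proposition~\ref{prop:gaindecay} there exists some $\tau>0$ with $f\in X_\tau$. Fix $\varrho>0$ small enough that Lemma~\ref{lemma:Ap} applies to the weight defining $X_\varrho$, small enough that the preceding Lemma applies with some $\rho\in(\varrho,\tau]$, and in particular $\varrho\le\tau$, so that $f\in X_\varrho$ by Lemma~\ref{lemma:Xproperties}(i). For each $s\ge 0$ the preceding Lemma then furnishes a finite constant $C_s$ with $\norm{D^s_\Lambda f}_{X_\varrho}\le C_s$ uniformly in $\Lambda\ge 1$.

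Since $p_\varrho\in(1,\infty)$, the space $X_\varrho$ is reflexive; extract a subsequence $\Lambda_k\to\infty$ along which $D^s_{\Lambda_k}f$ converges weakly in $X_\varrho$ to some $g_s$ with $\norm{g_s}_{X_\varrho}\le C_s$. For any Schwartz $\phi$, self-adjointness of the Fourier multipliers gives $\int(D^s_{\Lambda_k}f)\phi\,dx=\int f\cdot D^s_{\Lambda_k}\phi\,dx$. The multipliers $\min(\abr{\xi},\abr{\Lambda_k})^s$ increase pointwise to $\abr{\xi}^s$ and are dominated by it, so dominated convergence yields $D^s_{\Lambda_k}\phi\to\abr{D}^s\phi$ in every Schwartz seminorm; since $f\in L^{p_0}$, H\"older's inequality implies $\int f\cdot D^s_{\Lambda_k}\phi\,dx\to\int f\cdot\abr{D}^s\phi\,dx$. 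Because Schwartz functions lie in and are dense in the dual of $X_\varrho$, this identifies $g_s=\abr{D}^s f$ as tempered distributions, and hence $\abr{D}^s f\in X_\varrho$ for every $s\ge 0$.

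To pass from $\abr{D}^s$ to $|D|^s$, write $|D|^s=m_s(D)\abr{D}^s$ with symbol $m_s(\xi)=|\xi|^s\abr{\xi}^{-s}$. One verifies $|\xi|^{|\alpha|}|\partial^\alpha m_s(\xi)|\le C_\alpha$ for every multi-index $\alpha$: near the origin $m_s\sim|\xi|^s$ and each derivative of $|\xi|^s$ costs a factor $|\xi|^{-1}$, exactly compensated by the factor $|\xi|^{|\alpha|}$ to leave the bounded expression $\sim|\xi|^s$; near infinity $m_s$ tends smoothly to $1$. Thus $m_s(D)$ is a Calder\'on--Zygmund operator, bounded on $X_\varrho$ by Lemma~\ref{lemma:Ap} and the $A_p$-weighted theory, and consequently $|D|^s f=m_s(D)\abr{D}^s f\in X_\varrho$. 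Taking $t=\varrho$ completes the argument. The main obstacle is the Mikhlin verification for fractional $s\in(0,1)$, where $|\xi|^s$ is not smooth at the origin; however, the singular behavior of $\partial^\alpha|\xi|^s$ is exactly balanced by the factor $|\xi|^{|\alpha|}$ in Mikhlin's condition, so the estimate reduces to the trivial bound $|\xi|^s\lesssim 1$ on a neighborhood of the origin.
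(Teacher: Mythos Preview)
Your argument is essentially the one the paper has in mind: it states the corollary as an immediate consequence of the uniform-in-$\Lambda$ bound and Proposition~\ref{prop:gaindecay}, leaving the limit passage implicit, and you have correctly supplied that passage via weak compactness in $X_\varrho$ together with a Mikhlin argument to go from $\abr{D}^s$ to $|D|^s$.

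One small inaccuracy: the multiplier $\min(\abr{\xi},\abr{\Lambda_k})^s$ has a kink on the sphere $|\xi|=\Lambda_k$, so $D^s_{\Lambda_k}\phi$ is not Schwartz and cannot converge in every Schwartz seminorm. This does not damage the argument, since you only need convergence of $\int f\cdot D^s_{\Lambda_k}\phi$, for which it suffices that $D^s_{\Lambda_k}\phi\to\abr{D}^s\phi$ in $L^{p_0'}$; that follows by dominated convergence on the Fourier side, because the difference has Fourier transform supported in $\{|\xi|\ge\Lambda_k\}$ and is dominated by $\abr{\xi}^s|\widehat{\phi}(\xi)|$.
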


It is now an easy consequence of Sobolev embedding that any solution
of \eqref{eq:EL} is $C^\infty$, completing the proof of Theorem~\ref{thm:main}.
\qed

\section{Proof of Lemma~\ref{lemma:key}} \label{section:tedious}

Recall the definitions
\begin{align*}
T^*f(x)&=\int_{\reals^{d-1} } f(x'+t,x_d+|t|^2)\,dt
\\
\upsilon(x) &= \min\big(1,\,|x'|^{-d},\,|x_d-|x'|^2|^{-d}\big)
\\
\upsilon_*(x) &= \min\big(1,\,|x'|^{-d},\,|x_d+|x'|^2|^{-d}\big)
\end{align*}
where $x\in\reals^d$ as written as $x=(x',x_d)\in\reals^{d-1}\times\reals^1$.
Lemma~\ref{lemma:key}
states that $T^*\upsilon\lesssim \upsilon_*^{1/d}$,
with a corresponding inequality for $T$.

Each of the weights $\upsilon,\upsilon_*$
is equal to a minimum of three functions having
three different degrees of homogeneity
$0,1,2$ with respect to the parabolic dilation group $x\mapsto (rx',r^2x_d)$,
so there is no dilation invariance to simplify the analysis.
Viewing $T^*\upsilon(x)$ as an integral with respect to a second
variable $y\in\reals^d$, and comparing the result to $\upsilon_*(x)^{1/d}$,
the estimation of $T^*\upsilon(x)$ splits naturally into $3\times 3=9$ cases.
This factor of $9$ accounts largely for the length of the proof which we now present;
in actuality some cases are combinable, but various subcases also arise.

\begin{proof}[Proof of Lemma~\ref{lemma:key}]
The two conclusions of Lemma~\ref{lemma:key}
can be shown to be equivalent by the change
of variables $(x',x_d)\mapsto (x',-x_d)$,
along with the substitution $t\mapsto -t$ in the integrals defining
$T,T^*$. So we will prove only the inequality
$T^*(\upsilon)\lesssim \upsilon_*^{1/d}$.

Write
\begin{multline*}
T^*\upsilon(x)=\int_{\reals^{d-1} } \upsilon(x'+t,x_d+|t|^2)\,dt
=\int_{\reals^{d-1} }\upsilon(s,x_d+|s-x'|^2)\,ds
\\
=
\int_{\reals^{d-1} }
\min\big(1,\,|s|^{-d},\,|x_d+|s-x'|^2-|s|^2|^{-d}\big)\,ds.
\end{multline*}
Observe that
\[
T^* \upsilon (x)
\lesssim
\int_{\reals^{d-1}} \abr{s}^{-d}\,ds\lesssim 1 \text{ uniformly for all $x\in\reals^{d-1}$}.
\]
This satisfies the required bound $C\upsilon_*^{1/d}(x)$
provided that $\upsilon_*(x)$ remains uniformly bounded below.
Therefore we may assume throughout the rest of the analysis
of the contribution of $T^*\upsilon(x)$ that
\begin{equation} \label{large}
\max(|x'|,|x_d+x'|^2)\gg 1.
\end{equation}

In the same way, because the integrand is $\le|s|^{-d}$
and because $\int_{|s|\ge\lambda}|s|^{-d}\,dx
\lesssim \lambda^{-1}$,
the contribution made to the integral by the set of all $s$ satisfying
$|s|\ge \tfrac14 \max(|x'|,|x_d+|x'|^2|)$
is
\[
\lesssim
\max(|x'|,|x_d+|x'|^2|)^{-1}
=\upsilon_*(x)^{1/d}.
\]
It remains to discuss the contribution of those $s$
which satisfy
\begin{equation} \label{0.5}
|s|< \tfrac14 \max(|x'|,|x_d+|x'|^2|).
\end{equation}

For each $x\in\reals^d$,
partition the set of all such $s\in\reals^{d-1}$ into two regions
\begin{equation}
\scriptr_1(x)=\{s\in \reals^{d-1}:
|s|\ge|x_d+|x'|^2-2x'\cdot s|
\ \text{ and } \ |s|< \tfrac14 \max(|x'|,|x_d+|x'|^2|)
\};
\end{equation} \begin{equation}
\scriptr_2(x)=\{s\in \reals^{d-1}:
|s|<|x_d+|x'|^2-2x' \cdot s|
\ \text{ and } \ |s|< \tfrac14 \max(|x'|,|x_d+|x'|^2|)\}.
\end{equation}
Thus we have shown that
\begin{equation}
T^*\upsilon(x) \le C\upsilon_*(x)^{1/d} + J_1(x)+J_2(x)
\end{equation}
where
\begin{equation*}
J_i(x)=\int_{\scriptr_i(x)}
\min\big(1,\,|s|^{-d},\,|x_d+|s-x'|^2-|s|^2|^{-d}\big)\,ds.
\end{equation*}
More succinctly,
\begin{align*}
&J_1(x)
\asymp \int_{\scriptr_1(x)}
\abr{s}^{-d}\,ds
\\
&J_2(x)
\asymp \int_{\scriptr_2(x)}
\abr{x_d+|x'|^2-2x'\cdot s}^{-d}\,ds.
\end{align*}
We will often write $J_i,\scriptr_i$ as shorthand for $J_i(x),\scriptr_i(x)$.

\noindent
\bull
\textbf {Estimate for $J_1(x)$ in the case $|x'|\ge |x_d+|x'|^2|$}.
In this case,
\[|x_d+|x'|^2-2x'\cdot s|\le |s|\le \tfrac14\max(|x'|,|x_d+|x'|^2|) < |x'|\]
by definition of $\scriptr_1(x)$,
so one of the following two subcases occurs:
\begin{equation}\label{0.6}
|x_d+|x'|^2-2x'\cdot s|\le 1
\ \text{ or } \
1<|x_d+|x'|^2-2x'\cdot s|\le |x'|.
\end{equation}

Consider first the contribution made to $J_1(x)$ by those
$s\in\scriptr_{1}$ which satisfy the first case in \eqref{0.6}.
There exists at least one index $i\in\{1,2,\cdots,d-1\}$
such that $|x_{i}|\ge {|x'|}/\sqrt{d-1}$.
Our problem is invariant with respect to rotations of $\reals^{d-1}$,
which leave the coordinate $x_d$ unchanged.
Therefore without loss of generality, we may assume
throughout the remainder of the proof of the Lemma that
\begin{equation}|x_{1}|\ge {|x'|}/\sqrt{d-1}.\end{equation}
We are working in the situation where
$1\ll \max(|x'|,|x_d+|x'|^2|)=|x'|\lesssim |x_1|$
by \eqref{large} and the definition of $\scriptr_{1,1}$,
so $|x_1|\gg 1$.

Introduce the notations
 \[ \tilde s = (s_2,\cdots,s_{d-1}) \ \text{ and }\
\phi(x,\tilde s)  =x_d+|x'|^2-2\sum_{i=2}^{d-1}x_is_i.  \]
Thus
$x_d+|x'|^2-2x'\cdot s=\phi(x,\tilde s)-2x_1s_1$,
so
$|2x_1s_1-\phi(x,\tilde s)|\le 1$
by definition of $\scriptr_{1}(x)$ and the first case of \eqref{0.6}.
The following fact will be used repeatedly throughout the analysis:
If $(x,\tilde s)$ is fixed, then an inequality
$|2x_1s_1-\phi(x,\tilde s)|\le \delta$
forces $s_1$ to lie in an interval
of length $\delta|x_1|^{-1}$.

Now the contribution made by those $s$ belonging to the first subcase of \eqref{0.6}
to
$\int_{\scriptr_{1}(x)}\abr{s}^{-d}\,ds$ is
\begin{multline*}
\lesssim
\int_{\scriptr_{1}(x)}\abr{s}^{-d}\,ds
\le \int_{\reals^{d-2}} \int_{|s_1-\phi(x,\tilde s)|\lesssim |x_1|^{-1}}
\abr{\tilde s}^{-d}\,ds_1\,d\tilde s
\lesssim
|x_1|^{-1}
\int_{\reals^{d-2}}
\abr{\tilde s}^{-d}\,d\tilde s
\lesssim |x_1|^{-1}.
\end{multline*}
This is the required bound, for $|x_1|^{-1}\lesssim \upsilon_*(x)^{1/d}$
because we are working in the case where
$|x_1|\gtrsim |x'|\ge |x_d+|x'|^2|\gg 1$.

Next we consider the contribution of those $s\in\scriptr_{1}(x)$ which satisfy
the second case in \eqref{0.6}, still under the assumption that
$|x_d+|x'|^2|\le|x'|$.
For $j,k\ge 1$ define
\[
E_1^{j,k}(x)
=\{s\in \scriptr_{1}(x):  2^{-k}<\frac{|x_d+|x'|^2-2x'\cdot s|}{|x'|}
\le 2^{-k+1}
\text{ and }
2^{-j}<\frac{|s|}{|x'|}\le 2^{-j+1} \}.
\]
For any $s\in E_1^{j,k}(x)$,
\[2^{-k}|x'|\le |x_d+|x'|^2-2x'\cdot s| \le |s|\le 2^{-j+1}|x'|.\]
Thus $j\le k +1$.

If $\tilde s$ remains fixed and $s\in E_1^{j,k}$,
then $s_1$ lies in an interval
of length $\asymp |x_1|^{-1}2^{-k}|x'|\asymp 2^{-k}$.
Since $|\tilde s|\le |s|\le 2^{-j+1}|x'|$,
\[|E_1^{j,k}(x)|\lesssim (2^{-j}|x'|)^{d-2}\cdot 2^{-k}.\]
Therefore the contribution made to $J_1(x)$ by all $s$ belonging to this subcase is
\begin{multline*}
\le\sum_{k=1}^{[\log_2|x'|+1]}\sum_{j=1}^{k+1}\int_{E_1^{j,k}(x)}  {\abr{ {s}}^{-d}}  \,ds
\le \sum_{k=1}^{[\log_2|x'|+1]}\sum_{j=1}^{k+1}
(2^{-j}|x'|)^{-d} |E_1^{j,k}(x)|
\\
\lesssim \sum_{k=1}^{[\log_2|x'|+1]}\sum_{j=1}^{k+1}(2^{-j}|x'|)^{-d}(2^{-j}|x'|)^{d-2} 2^{-k}
\\
=
|x'|^{-2}\sum_{k=1}^{[\log_2|x'|+1]}\sum_{j=1}^{k+1}
2^{2j}2^{-k}
\lesssim |x'|^{-1}.
\end{multline*}
This completes the analysis of $J_1(x)$
in the case where $|x'|\ge \big|x_d+|x'|^2 \big|$.

\noindent
\bull
\textbf {Estimate for $J_1(x)$ in the case $|x'|\le |x_d+|x'|^2|$}.
The definition \eqref{0.5} of $\scriptr_1(x)$ becomes
\begin{equation} \label{0.5again}
|x_d+|x'|^2-2x'\cdot s|\le |s|\le|x_d+|x'|^2|
\text{ for all } s\in\scriptr_1(x).
\end{equation}

If $|x'|\le 1$, then since $\max\{|x'|,|x_d+|x'|^2|\}\gg 1 $, necessarily $|x_d|\gg 1$.
Since $|x_d+|x'|^2-2x'\cdot s|\le|s|$,
it follows that $|x_d|\le C |s|$.
On the other hand, by \eqref{0.5again} again,
$|s|\le |x_d+|x'|^2|\le 2|x_d|$. So $|s|\asymp |x_d|$.
Therefore
$$J_1(x)
\lesssim  \int_{|s|\le 2|x_d|} |x_d|^{-d}ds\lesssim {|x_d|^{-1}}\sim |x_d+|x'|^2|^{-1}
\sim \upsilon_*(x)^{1/d}.$$

Suppose now that $|x'|\ge 1$.
Recall our standing assumption that
$|x_1|\ge |x'|/\sqrt{d-1}$.
Since
$|x_d+|x'|^2-2x'\cdot s|\le |s|$,
\[|x_d+|x'|^2|\le |s|+2|x'|\cdot|s|\le 3|x'|\cdot|s|.\]

Suppose that there exists some $s\in\scriptr_1(x)$ satisfying $|s|\le 1$,
and consider the contribution to $J_1(x)$ made by all such $s$.
For fixed $\tilde s$, according to \eqref{0.5again},
$s_1$ lies in an interval
of length $\lesssim |x'|^{-1}|s|\le|x'|^{-1}$.
Since $|\tilde s|\le |s|\le 1$,
the intersection of $\scriptr_1(x)$ with $\{s: |s_1|\le 1\}$
has measure $\lesssim |x'|^{-1}$.
Therefore
the contribution made to $J_1(x)$ by all $s\in\scriptr_1(x)$ satisfying
$|s|\le 1$ is
$\lesssim |x'|^{-1}$.
This is the required bound,
for since
$|x_d+|x'|^2|\le 3|x'|\cdot|s|$  and $|s|\le 1$,
\[
|x'|^{-1}
\lesssim |x_d+|x'|^2|^{-1}.
\]

Continuing to assume that $|x'|\ge 1$,
consider next the contribution of all $s\in\scriptr_1(x)$ satisfying $|s|>1$.
Now $s\in\scriptr_1$ implies that $|x_d+|x'|^2|\le |s|+2|s||x'|\le 3|s|\cdot|x'|$,
that is,
\begin{equation} \label{sbound1}
|s|\ge 3^{-1}|x'|^{-1}|x_d+|x'|^2|.
\end{equation}
We will consider two subcases, (i) $|x_d+|x'|^2-2x'\cdot s|> 3^{-1}|x'|^{-1}|x_d+|x'|^2|$
and (ii) $|x_d+|x'|^2-2x'\cdot s|\le  3^{-1}|x'|^{-1}|x_d+|x'|^2|.$

First, to treat the contribution of those $s\in\scriptr_1(x)$
which satisfy
$|x_d+|x'|^2-2x'\cdot s|> 3^{-1}|x'|^{-1}|x_d+|x'|^2|$,
define $E_2^{j,k}(x)$ to be the set of all $s\in \scriptr_1(x)$ which satisfy
both of
\begin{gather*}
 2^{k}3^{-1}|x'|^{-1}|x_d+|x'|^2|<|x_d+|x'|^2-2x'\cdot s|\le 2^{k+1}3^{-1}|x'|^{-1}|x_d+|x'|^2|
\\
2^{j}3^{-1}|x'|^{-1}|x_d+|x'|^2|\le |s|<2^{j+1}3^{-1}|x'|^{-1}|x_d+|x'|^2|.
\end{gather*}
If $s\in  E_2^{j,k}(x)$ then $k\le j+1$.

Now
\[|E_2^{j,k}(x)|\lesssim
(2^{j+1}|x'|^{-1}|x_d+|x'|^2)^{d-2}
2^{k+1}|x_d+|x'|^2|\cdot |x'|^{-2}.  \]
Indeed,
$|\tilde s|\le |s|\le 2^{j+1}3^{-1}|x'|^{-1}|x_d+|x'|^2|$,
while
for fixed $\tilde s$, $s_1$ lies in an interval
of length $\sim 2^{k+1}|x_d+|x'|^2|\cdot |x'|^{-2}$.

Together with \eqref{0.5again}, this implies that
the total contribution made to $J_1(x)$
by all $s\in\scriptr_1(x)$ which satisfy
$|x_d+|x'|^2-2x'\cdot s|> 3^{-1}|x'|^{-1}|x_d+|x'|^2|$
is
\begin{equation*}\aligned
&\le \sum_{j=1}^{[\log_2|x_1|+1] }\sum_{k=1}^{j+1}
\int_{E_2^{j,k}(x)}  {\abr{ {s}}^{-d}} \, ds
\\
&\lesssim\sum_{j=1}^\infty \sum_{k=1}^{j+1}
(2^{j}|x'|^{-1}|x_d+|x'|^2|)^{-d}|E_2^{j,k}(x)|
\\
&\lesssim
\sum_{j=1}^\infty \sum_{k=1}^{j+1}(2^{j}|x_d+|x'|^2|\cdot|x'|^{-1})^{-d}
(2^{{j+1}}|x'|^{-1}|x_d+|x'|^2|)^{d-2}2^{k+1}|x'|^{-2}|x_d+|x'|^2|
\\
&\lesssim
(|x_d+|x'|^2|\cdot|x'|^{-1})^{-d}
\cdot
(|x'|^{-1}|x_d+|x'|^2|)^{d-2}
\cdot |x'|^{-2}|x_d+|x'|^2|
\cdot
\sum_{j=1}^\infty 2^{-j}
\\
&=|x_d+|x'|^2|^{-1}.
\endaligned
\end{equation*}

Secondly, to treat the contribution of those $s\in\scriptr_1(x)$ which satisfy
$|x_d+|x'|^2-2x'\cdot s|\le 3^{-1} |x'|^{-1}|x_d+|x'|^2|$,
define $E_3^{j}(x)$ to be the set of all
$s\in \scriptr_1(x)$
which satisfy both of
\begin{gather}
 |x_d+|x'|^2-2x'\cdot s|\le 3^{-1} |x'|^{-1}|x_d+|x'|^2|
\\
2^{j}|x'|^{-1}|x_d+|x'|^2|<|s|\le 2^{j+1}|x'|^{-1}|x_d+|x'|^2|.
\end{gather}
If $s\in E_3^{j}(x)$ and $\tilde s$ is fixed, then $s_1$ lies in
an interval of length
$\lesssim |x'|^{-2}|x_d+|x'|^2|$, while $|\tilde s|\le |s|\le 2^{j+1}|x'|^{-1}|x_d+|x'|^2|$.
Thus
\[
|E_3^j(x)|\lesssim
|x'|^{-2}|x_d+|x'|^2| \cdot (2^{j}|x'|^{-1}|x_d+|x'|^2|)^{d-2}
=2^{(d-2)j}|x'|^{-d}|x_d+|x'|^2|^{d-1}.
\]
Consequently the contribution made by all $s\in\scriptr_1(x)$
which satisfy
$|x_d+|x'|^2-2x'\cdot s|\le 3^{-1} |x'|^{-1}|x_d+|x'|^2|$
is
\begin{multline*}
\lesssim \sum_{j=1}^{[\log_2|x_1|+1] } \int_{E_3^{j}(x)}  {\abr{ {s}}^{-d}}  \,ds
\\
 \lesssim
\sum_{j=1}^\infty (2^{j}|x'|^{-1}|x_d+|x'|^2|)^{-d}
2^{(d-2)j}|x'|^{-d}|x_d+|x'|^2|^{d-1}
 \le |x_d+|x'|^2|^{-1}.
\end{multline*}
This completes the discussion of $J_1(x)$.

\bigskip
We turn to the discussion of
$J_2(x)=\int_{\scriptr_2(x)} \abr{x_d+|x'|^2-2x'\cdot s}^{-d}\,ds$.
As already noted, we may continue to assume that
$\max(|x'|,|x_d+|x'|^2|)\gg 1$.

\noindent
\bull
\textbf {Estimate for $J_2(x)$ in the case $|x'|\le|x_d+|x'|^2|$  and $|x'|>1$.}
If
$|x_d+|x'|^2-2x'\cdot s|\le \tfrac {|x_d+|x'|^2|}{4|x'|}$, then
$|2x'\cdot s|\ge |x_d+|x'|^2|(1-\tfrac 1{4|x'|})$. Therefore
\begin{equation} \label{eq:strangecontradiction}
|s|\ge\tfrac{ |x_d+|x'|^2|}{2|x'|}(1-\tfrac 1{4|x'|})
\ge \tfrac 38\frac{ |x_d+|x'|^2|}{|x'|}
\ge \tfrac32 |x_d+|x'|^2-2x'\cdot s|.
\end{equation}
This contradicts the
definition of $\scriptr_2(x)$.
We conclude that
if $|x'|\le|x_d+|x'|^2|$  and $|x'|>1$,
then
\[|x_d+|x'|^2-2x'\cdot s|> \tfrac {|x_d+|x'|^2|}{4|x'|}.\]

Define
\[
E_4^{j}(x)=\{s\in\scriptr_2(x):
2^{j}|x'|^{-1}|x_d+|x'|^2|<|x_d+|x'|^2-2x'\cdot s|
\le 2^{j+1}|x'|^{-1}|x_d+|x'|^2|.\}
\]
If
$s\in E_4^{j}(x)$ and $\tilde s$ is fixed, then
since $|x_1|\sim|x'|$,
$s_1$ lies in an interval
of length
$\sim \tfrac { 2^{j}|x_d+|x'|^2|}{|x'|^2}$.
From the bound
\[|\tilde s|\le|s|\le |x_d+|x'|^2-2x'\cdot s|
\le 2^{j+1}|x_d+|x'|^2||x'|^{-1},\]
it now follows that
\[
|E_4^j(x)|\lesssim
|x'|^{-2}2^{j}
|x_d+|x'|^2|
(2^{j}|x'|^{-1}|x_d+|x'|^2|)^{d-2}
=
2^{j-1}|x'|^{-d} |x_d+|x'|^2|^{d-1}.
\]
Therefore
if $|x'|\le|x_d+|x'|^2|$  and $|x'|>1$,
the contribution of $\scriptr_2(x)$ to $J_2(x)$ is
\begin{multline*}
\sum_{{j=0}}^\infty \int_{E_4^{j}(x)} \abr{x_d+|x'|^2-2x'\cdot s}^{-d}\,ds
\\
\lesssim \sum_{{j=0}}^\infty
(2^j\tfrac {|x_d+|x'|^2|}{|x'|})^{-d}
2^{j(d-1)}|x'|^{-d} |x_d+|x'|^2|^{d-1}
=|x_d+|x'|^2|^{-1}.
\end{multline*}

\noindent
\bull
\textbf {Estimate for $J_2(x)$ in the case $|x'|\le|x_d+|x'|^2|$  and $|x'|\le 1$.}
Let $s\in\scriptr_2(x)$.
Then
\begin{align*}
|x_d+|x'|^2-2x'\cdot s|
&\ge |x_d+|x'|^2|-2|s\cdot x'|
\\
&\ge |x_d+|x'|^2|-2|s|
\\
&\ge |x_d+|x'|^2|-2|x_d+|x'|^2-2x'\cdot s|,
\end{align*}
where the definition of $\scriptr_2(x)$ was invoked to obtain the
last inequality.
This implies that
\[|x_d+|x'|^2-2x'\cdot s|\ge \tfrac13 |x_d+|x'|^2|. \]

Define
\[
E_5^{j}(x)=\{s\in\scriptr_2(x):
2^j|x_d+|x'|^2|
\le 3|x_d+|x'|^2-2x'\cdot s|
<2^{j+1} |x_d+|x'|^2|.\}
\]
Any $s\in E_5^j(x)$ satisfies
\[
|s|\le
|x_d+|x'|^2|-2|x_d+|x'|^2-2x'\cdot s|\sim 2^j|x_d+|x'|^2|,\]
so
$|E_5^j(x)|\lesssim (2^j|x_d+|x'|^2|)^{d-1}$,
whence
\begin{multline*}
J_2(x)
=\int_{\scriptr_2(x)}
\abr{x_d+|x'|^2-2x'\cdot s}^{-d}\,ds
\lesssim
\sum_{j=0}^\infty
(2^j|x_d+|x'|^2|)^{-d}
|E_5^j(x)|
\\
\lesssim
\sum_{j=0}^\infty
(2^j|x_d+|x'|^2|)^{-d}
(2^j|x_d+|x'|^2|)^{d-1}
= C |x_d+|x'|^2|^{-1}.
\end{multline*}
This concludes the analysis of $J_2(x)$, in the case in which $|x'|\le|x_d+|x'|^2|$.

\noindent
\bull
\textbf {Estimate for $J_2(x)$ in the case $|x'|\ge|x_d+|x'|^2|$.}
We may continue to assume that
$|x_1|\ge |x'|/\sqrt{d-1}$.
Partition $\scriptr_2(x)$ into the following three subregions:
\begin{gather*}
|x_d+|x'|^2-2x'\cdot s|>2|x'|,
\\
1<|x_d+|x'|^2-2x'\cdot s|\le 2|x'|
\\
|x_d+|x'|^2-2x'\cdot s|\le 1.
\end{gather*}


To analyze the contribution of the
subregion in which $|x_d+|x'|^2-2x'\cdot s|>2|x'|$,
for each integer $j\ge 1$ define
\[
E_7^j(x)
=\{
s\in\scriptr_2(x):
2^j{|x'|}<|x_d+|x'|^2-2x'\cdot s|\le 2^{j+1}{|x'|}
\}.
\]
For fixed $\tilde s$,
$s_1$ lies in a subinterval of length $\sim 2^j$,
while
$|\tilde s|
\le |s|\le |x_d+|x'|^2-2x'\cdot s|\lesssim 2^j|x'|,
$
so
$|E_7^j(x)|\lesssim 2^{j(d-1)}|x'|^{d-2}$.
Therefore
the contribution of this subset of $\scriptr_2(x)$ to $J_2(x)$ is
\begin{multline*}
\lesssim \sum_{j=1}^\infty (2^j|x'|)^{-d}|E_7^j(x)|
\lesssim \sum_{j=1}^\infty (2^j|x'|)^{-d}2^{j(d-1)}|x'|^{d-2}
=|x'|^{-2}
\ll |x'|^{-1}.
\end{multline*}

To analyze the contribution of the subregion in which
$1<|x_d+|x'|^2-2x'\cdot s|\le 2|x'|$, partition further into
subregions in which
$|x_d+|x'|^2-2x'\cdot s|\le 2|x'|\sim 2^{-j}|x'|$,
where $1\le 2^j\lesssim |x'|$.
Such a sub-subregion has measure $\lesssim 2^{-j}(2^{-j}|x'|)^{d-2}
= 2^{-j(d-1)}|x'|^{d-2}$,
giving a total contribution to $J_2(x)$ which is
\[
\lesssim \sum_{1\le 2^j\lesssim |x'|}
(2^{-j}|x'|)^{-d} 2^{-j(d-1)}|x'|^{d-2}
= \sum_{1\le 2^j\lesssim |x'|}
2^j |x'|^{-2}
\lesssim |x'|^{-1},
\]
as required.

It remains only
to analyze the contribution made to $J_2(x)$ by the
subregion in which $|x_d+|x'|^2-2x'\cdot s|\le 1$,
assuming still that $|x'|\ge |x_d+|x'|^2|$.
Since $|s|\le |x_d+|x'|^2-2x'\cdot s|\le 1$,
and since $s_1$ lies in an interval of length $\lesssim |x_1|^{-1}\lesssim|x'|^{-1}$
so long as $\tilde s$ remains fixed,
the measure of this subregion is $\lesssim |x'|^{-1}$.
The integrand $\abr{x_d+|x'|^2-2x'\cdot s}^{-d}$ is $\le 1$,
so its integral over this subregion is $\lesssim |x'|^{-1}$.

The proof of Lemma~\ref{lemma:key} is complete.
\end{proof}

\begin{corollary} Let $f_0=\chi_{|x|\le 1}$.
Define $f_{n+1}$ to be
$(T(f_n))^d$ if $n$ is even,
and
$(T^*(f_n))^d$ if $n$ is odd.
Then
for any even $n>0$,
\[f_n\le C^n \upsilon_*.\]
\end{corollary}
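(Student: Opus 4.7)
The plan is a direct induction on the even index $n \ge 0$, at each step pairing the two inequalities of Lemma~\ref{lemma:key} to pass from a bound $f_n \lesssim \upsilon_*$ through the intermediate bound $f_{n+1} \lesssim \upsilon$ back to $f_{n+2} \lesssim \upsilon_*$. For the base case $n=0$, I would verify that $f_0 = \chi_{|x|\le 1} \le 2^d \upsilon_*$ pointwise; this reduces to the observation that each of the three quantities $1$, $|x'|^{-d}$, $|x_d+|x'|^2|^{-d}$ appearing in the definition of $\upsilon_*$ is bounded below by $2^{-d}$ on the unit ball (indeed, on $|x|\le 1$ one has $|x'|\le 1$, $|x_d+|x'|^2|\le 2$, and the minimum of $1$ and $2^{-d}$ is $2^{-d}$).

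For the inductive step, suppose $n$ is even and $f_n \le A \upsilon_*$ pointwise. Monotonicity of $T$ on nonnegative functions together with the first inequality of Lemma~\ref{lemma:key} gives $Tf_n \le A\, T(\upsilon_*) \le A C_d\, \upsilon^{1/d}$, so raising to the $d$-th power,
\[
f_{n+1} = (Tf_n)^d \le A^d C_d^d\, \upsilon.
\]
Since $n+1$ is odd, I next apply $T^*$ and the second inequality of Lemma~\ref{lemma:key} to obtain $T^* f_{n+1} \le A^d C_d^{d+1}\, \upsilon_*^{1/d}$; raising to the $d$-th power yields
\[
f_{n+2} = (T^* f_{n+1})^d \le A^{d^2} C_d^{d^2+d}\, \upsilon_*.
\]
This closes the induction with the recurrence $A_{n+2} = A_n^{d^2} C_d^{d^2+d}$, $A_0 = 2^d$, and by construction $f_n \le A_n \upsilon_*$ for every even $n \ge 0$.

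There is no real obstacle; the argument is a routine iteration of Lemma~\ref{lemma:key}, and the only delicate point is cosmetic. Solving the recurrence for $A_n$ yields a constant that grows doubly exponentially in $n$ once $d \ge 2$, rather than as the $n$-th power of a fixed $C$. I would therefore read the inequality $f_n \le C^n \upsilon_*$ in the statement as shorthand for $f_n \le C_n \upsilon_*$ with $C_n$ permitted to depend on $n$; the substantive content of the corollary is simply that every iterate $f_n$ is pointwise dominated by a finite multiple of $\upsilon_*$, and this is exactly what the induction above delivers.
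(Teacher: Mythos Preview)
Your argument is correct and matches the paper's own proof, which simply notes that the corollary follows at once from $n$ applications of Lemma~\ref{lemma:key} together with the observation $0\le f_0\le\upsilon_*$ (up to the harmless constant $2^d$ you identify). Your remark that the resulting constant grows doubly exponentially in $n$ rather than literally as $C^n$ is also accurate; the paper's notation here is informal, and the substantive content is precisely the pointwise domination by a finite multiple of $\upsilon_*$ that your induction delivers.
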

This follows at once from $n$ applications of Lemma~\ref{lemma:key},
since $0\le f_0\le\upsilon_*$.

\section{Proof of Lemma~\ref{lemma:leibniz} } \label{section:leibniz}

The following argument is essentially taken from \cite{christweinstein}.

\begin{proof}
Fix a smooth, compactly supported
cutoff function $\eta\in C^\infty_0(\reals^d)$
satisfying $\eta(\xi)\equiv 1$ for all $|\xi|\le 1$,
and $\eta(\xi)=0$ for all $|\xi|\ge 2$.
For each $k\in\{0,1,2,\cdots\}$
introduce the Fourier multiplier $P_{k}$ defined by
$\widehat{P_k f}(\xi) = \widehat{f}(\xi)\eta(2^{-k}\xi)$.
For $k\ge 1$ define $Q_k=P_k-P_{k-1}$.
Observe  that $\widehat{Q_k f}(\xi)$
is supported in $\{\xi: 2^{k-1}\le |\xi|\le 2^{k+1}\}$.
Fix $K$ such that $2^K\ge\Lambda>2^{K-1}$.
Set $R_K f = f-P_{K}f$.
$\widehat{R_K f}(\xi)$ is supported in
$\{\xi: |\xi|\ge 2^{K}\}$.

Decompose
\[f
= P_0 f+\sum_{k=1}^{K} Q_k f + R_K f,
\]
and decompose $g$ in the same manner.
By expanding the product $fg$ in terms of these summands and
recombining terms, one obtains
\begin{align}
fg &= \sum_{k=3}^{K} Q_k f\cdot P_{k-3}g
+ \sum_{k=3}^K Q_k g\cdot P_{k-3}f
\label{toomanyterms1}
\\
&+ \sum_{k=2}^K Q_k f\big (Q_{k-2}g+Q_{k-1}g+Q_k g\big)
+ \sum_{k=2}^K Q_k g\big (Q_{k-2}f+Q_{k-1}f\big)
\label{toomanyterms2}
\\
&+ R_K f\cdot P_{K-2}g
+ R_K g\cdot P_{K-2}f
\label{toomanyterms3}
\\
&+ R_K f \big(Q_{K-1}g + Q_K g\big)
+ R_K g \big(Q_{K-1}f + Q_K f\big)
+ R_K f\cdot R_K g
\notag
\\
&+\scriptr(f,g)
\notag
\end{align}
where $\scriptr(f,g)$
is a constant-coefficient finite linear combination of twofold products
of the factors $P_0f,P_0g, Q_1f, Q_1g$.

Consider the contribution made to $D_\Lambda^s(fg)$ by the first term
on the right-hand side in this equation.
The Fourier transform of
$D_\Lambda^s\big(Q_k f\cdot P_{k-3}g\big)$
is supported in
$\{\xi:  2^{k-2}\le |\xi|\le 2^{k+2}\}$.
Therefore by weighted Littlewood-Paley theory \cite{stein},
since $u\in L^r(\reals^d)$,
\[
\norm{
\sum_{k=2}^K D_\Lambda^s\big(Q_k f\cdot P_{k-3}g\big)
}_{L^r(u)}
\asymp
\norm{
\big\{D_\Lambda^s\big(Q_k f\cdot P_{k-3}g\big)\big\}_{k=2}^K
}_{L^r(\ell^2)(u)}
\]
where
\[
\norm{\{h_k\}}_{L^r(\ell^2)(u)}^r
= \int_{\reals^d} \big(\sum_k |h_k(x)|^2\big)^{r/2}u(x)\,dx.
\]

Writing $\widehat{D_\Lambda^s f}(\xi) = \lambda_{s,\Lambda}(\xi)\widehat{f}(\xi)$,
let $m_k(\xi) = 2^{-ks} \lambda_{s,\Lambda}(\xi)\zeta(2^{-k}\xi)$
where $\zeta(\xi)\equiv 1$ whenever $\tfrac14\le|\xi|\le 4$,
and
$\zeta(\xi)\equiv 0$ whenever $|\xi|\le \tfrac18$
or $|\xi|\ge 8$.

Define
$\widehat{M_k f}(\xi) = m_k(\xi) \widehat{f}(\xi)$.
Then
\[
\norm{
\big\{D_\Lambda^s\big(Q_k f\cdot P_{k-3}g\big)\big\}_{k=2}^K
}_{L^r(\ell^2)(u)}
=
\norm{
\big\{ M_k \big(2^{ks}Q_k f\cdot P_{k-3}g\big)\big\}_{k=2}^K
}_{L^r(\ell^2)(u)}.
\]
Because $u\in A_r$ and the operator $\vec{M}\{h_k\}=\{M_k h_k\}$
is a vector-valued Calder\'on-Zygmund operator,
$\vec{M}$ is bounded on $L^r(\ell^2)(u)$ \cite{AJ}.
Thus
\[
\norm{
D_\Lambda^s
\sum_{k=2}^K
Q_k f\cdot P_{k-3}g
}_{L^r(u)}
\lesssim
\norm{
\big\{ 2^{ks}Q_k f\cdot P_{k-3}g\big\}_{k=2}^K
}_{L^r(\ell^2)(u)}.
\]

Now \[|2^{ks}Q_k f\cdot P_{k-3}g|\le 2^{ks}|Q_k f|\cdot \scriptm g\]
where $\scriptm$ denotes the Hardy-Littlewood maximal function.
Therefore by H\"older's inequality and the factorization $u=u_1v_1$,
\begin{align*}
\norm{
\big\{ 2^{ks}Q_k f\cdot P_{k-3}g\big\}_{k=2}^K
}_{L^r(\ell^2)(u)}
&\le C
\norm{
\scriptm g\cdot (\sum_{k=2}^K |2^{ks}Q_k f|^2)^{1/2}
}_{L^r(u)}
\\
&\le C
\norm{ \scriptm g}_{L^{q_1}(v_1^{q_1/r})}
\norm{
\big\{ 2^{ks}Q_k f\big\}_{k=2}^K
}_{L^{p_1}(\ell^2)(u_1^{p_1/r})}.
\end{align*}
Since $v_1^{q_1/r}\in A_{q_1}$
and $\scriptm$ is bounded on $L^{q_1}$
with respect to any weight in $A_{q_1}$ \cite{stein}, this is majorized by
\[
\norm{ g}_{L^{q_1}(v_1^{q_1/r})}
\norm{
\big\{ 2^{ks}Q_k f\big\}_{k=2}^K
}_{L^{p_1}(\ell^2)(u_1^{p_1/r})}.
\]
Again by weighted vector-valued Calder\'on-Zygmund theory \cite{AJ},
since $u_1^{p_1/r}\in A_{p_1}$,
the second factor in this expression is majorized by
$C\norm{D_\Lambda^s f}_{L^{p_1}(u_1^{p_1/r)}}$.
Therefore when $D_\Lambda^s$ is applied to the first term
on the right-hand side of \eqref{toomanyterms1},
a bound of the required form is obtained.

The contributions of the second term on the right
in \eqref{toomanyterms1},
and of both terms in \eqref{toomanyterms2},
are treated in the same way.
To treat the contribution of $\scriptr(f,g)$ requires only
H\"older's inequality, since only low values of $|\xi|$
come into play and $s\ge 0$.

We discuss next the contribution of $\sum_{k=2}^K Q_k f\cdot Q_k g$.
The summand
$Q_k f\cdot Q_k g$ has Fourier transform supported in $\{\xi: |\xi|\le 2^{k+2}\}$
and therefore
\[
D_\Lambda^s\big( \sum_{k=2}^K Q_k f\cdot Q_k g\big)
=
\sum_{k=2}^K M_k\big(2^{ks}Q_k f\cdot Q_k g \big)
\]
where $M_k$ is the Fourier multiplier operator with multiplier
\[
m_k(\xi) = \lambda_{s,\Lambda}(\xi) 2^{-ks} \eta(2^{-k-2}\xi).
\]
It is routine to verify, using the hypothesis that $s\ge 0$,
that
$|M_k h|\le C\scriptm(h)$ for any function $h$, uniformly
in $k,\Lambda$ for $0\le k\le K$.
Therefore
\begin{align*}
\norm{
D_\Lambda^s\big( \sum_{k=2}^K Q_k f\cdot Q_k g\big)
}_{L^r(u)}
&\le C
\norm{
\scriptm
\sum_{k=2}^K Q_k f\cdot Q_k g
}_{L^r(u)}
\\
&\le C
\norm{
\sum_{k=2}^K 2^{ks}Q_k f\cdot Q_k g
}_{L^r(u)}
\\
&\le C
\norm{
\big(\sum_{k=2}^K |2^{ks}Q_k f|^2\big)^{1/2}
\big(\sum_{k=2}^K |Q_k g|^2\big)^{1/2}
}_{L^r(u)}
\\
&\le C
\norm{ \{2^{ks}Q_k f\}_{k=2}^K
}_{L^{p_1}(\ell^2)(u_1^{p_1/r})}
\cdot
\norm{ \{Q_k g\}_{k=2}^K
}_{L^{q_1}(\ell^2)(v_1^{q_1/r})}
\\
&\le C
\norm{D_\Lambda^s f
}_{L^{p_1}(u_1^{p_1/r})}
\norm{g
}_{L^{q_1}(\ell^2)(v_1^{q_1/r})},
\end{align*}
as desired.

All remaining terms can be treated in the same way as we have done for
$\sum_{k=2}^K Q_k f\cdot Q_k g$.
\end{proof}

\end{document}